\newtheorem{theorem}{Theorem}[section]
\newtheorem{remark}[theorem]{Remark}
\newtheorem{lemma}[theorem]{Lemma}
\newtheorem{proposition}[theorem]{Proposition}
\newcommand{\F}{\mathbb{F}}
\theoremstyle{definition}
\numberwithin{equation}{section}
\DeclareMathOperator{\SL}{SL}
\DeclareMathOperator{\Oo}{O}
\DeclareMathOperator{\sgn}{sgn}
\newcommand*{\No}{\textnumero}
\title{On splitting of the normalizers of maximal tori in $E_7(q)$ and $E_8(q)$}
\author{Alexey Galt and Alexey Staroletov
\thanks{Corresponding author, Sobolev Institute of Mathematics, 4 Acad. Koptyug avenue, 630090 Novosibirsk Russia, staroletov@math.nsc.ru} }
\date{\vspace{-5ex}}
\begin{document}
\newcommand{\Addresses}{{
  \bigskip
  \footnotesize

  A.~Galt, \textsc{Sobolev Institute of Mathematics, Novosibirsk, Russia;}\par\nopagebreak
  \textsc{Novosibirsk State University, Novosibirsk, Russia;}\par\nopagebreak
  \textit{E-mail address: } \texttt{galt84@gmail.com}

  \medskip

  A.~Staroletov, \textsc{Sobolev Institute of Mathematics, Novosibirsk, Russia;}\par\nopagebreak
  \textsc{Novosibirsk State University, Novosibirsk, Russia;}\par\nopagebreak
  \textit{E-mail address: } \texttt{staroletov@math.nsc.ru}
}}


\maketitle

\begin{abstract}
Let $G$ be a finite group of Lie type $E_7$ or $E_8$ over $\F_q$  and $W$ be the Weyl group of $G$. We describe all maximal tori $T$ of $G$ such that $T$ has a complement in its algebraic normalizer $N(G,T)$. Let $T$ correspond to an element $w$ of $W$. When $T$ does not have a complement, we show that $w$ has a lift to $N(G,T)$ of order $|w|$ in all considered groups, except the simply-connected group $E_7(q)$. In the latter case we describe the elements $w$ that have a lift to $N(G,T)$ of order $|w|$. 
\end{abstract}

{\bf keywords:} finite group of Lie type, maximal torus, algebraic normalizer, Weyl group

\section{Introduction}
Let $\overline{G}$ be a simple connected linear algebraic group over an algebraic closure $\overline{\F}_p$ of a finite field of positive characteristic $p$. Consider a Steinberg endomorphism $\sigma$ and a maximal $\sigma$-stable torus $\overline{T}$  of $\overline{G}$. It is well known that all maximal tori are conjugate in $\overline{G}$ and the quotient $N_{\overline{G}}(\overline{T})/\overline{T}$ is isomorphic to the Weyl group $W$ of $\overline{G}$. 

The natural question is to describe groups $\overline{G}$ such that $N_{\overline{G}}(\overline{T})$ splits over $\overline{T}$.
A similar question can be formulated for finite groups of Lie type. More precisely,
let $G$ be a finite group of Lie type, that is $O^{p'}(\overline{G}_{\sigma})\leqslant G\leqslant\overline{G}_{\sigma}$. Let $T=\overline{T}\cap G$ be a maximal torus in $G$ and $N(G,T)=N_{\overline{G}}(\overline{T})\cap G$ be the algebraic normalizer of $T$. Then the question is to describe groups $G$ and their maximal tori $T$ such that $N(G,T)$ splits over $T$.

These questions were stated by J.\,Tits in~\cite{Tits}. In the case of algebraic groups it was solved independently in~\cite{AdamsHe} and in~\cite{Galt1,Galt2,Galt3,Galt4}. In the case of finite groups the problem was
studied  for the groups of Lie types $A_n, B_n, C_n, D_n$, and $E_6$ in~\cite{Galt2,Galt3,Galt4,GS}.

J.\,Adams and X.\,He~\cite{AdamsHe} considered a related problem. Namely, it is natural to ask about the orders of lifts of $w\in W$ to $N_{\overline{G}}(\overline{T})$. They noticed that if $d$ is the order of $w$ then the minimal order of a lift of $w$ is either $d$ or $2d$, but it can be a subtle question which holds. Clearly, if $N_{\overline{G}}(\overline{T})$ splits over $\overline{T}$ then the minimal order is equal to $d$. 
In the cases of Lie types $E_6$, $E_7$, and $E_8$ they proved that the normalizer does not split and the minimal order of lifts of $w$ is $d$ if $w$ belongs to so-called regular or elliptic conjugacy classes.
G.\,Lusztig \cite{Lusztig} showed that every involution of the Weyl group of a split reductive group over ${\F}_q$ has a lift $n$  such that the image of $n$ under the Frobenius
map is equal to $n^{-1}$.

In this paper we consider finite groups $G$ of Lie types $E_7$ and $E_8$ over a finite field $\F_q$ of characteristic~$p$. Let $W$ be the Weyl group of type $E_l$, where $6\leqslant l\leqslant8$, and $\Delta=\{r_1,\ldots,r_l\}$ be a fundamental system of a root system $E_l$.
We enumerate conjugacy classes and roots of $W$ as in~\cite{DerF}. Denote by $w_i$ the element of $W$ corresponding to the reflection in the hyperplane orthogonal to the $i$-th positive root $r_i$. 

 For completeness, we first formulate the statement for groups of Lie type $E_6$.
Following~\cite{DerF}, we suppose that $r_{14}=r_2+r_4+r_5$ and $r_{36}=r_1+2r_2+2r_3+3r_4+2r_5+r_6$. Denote by $E^-_6(q)$ the finite group $^2E_6(q)$ and by $E^+_6(q)$ the finite group $E_6(q)$ (in both cases groups can be adjoint or simply-connected).  
As a consequence of ~\cite[Theorem~1.1]{GS} we obtain the following.
\begin{theorem}\label{th:E6}
Let $G=E_6^\varepsilon(q)$ (adjoint or simply-connected), where $\varepsilon\in\{+,-\}$,
and $W$ be the Weyl group of $G$. Consider a maximal torus $T$ of $G$ corresponding to an element $w$ of $W$. Then the following statements hold. 
\begin{itemize}
   \item[{\em (i)}] $T$ does not have a complement in $N(G,T)$ if and only if either
  $q\equiv-\varepsilon1\pmod{4}$ and $w$ is conjugate to $w_3w_2w_4w_{14}$
  or $q$ is odd and $w$ is conjugate to one of the following elements:
$1$, $w_1$, $w_1w_2$, $w_2w_3w_5$, $w_1w_3w_4$, $w_1w_4w_6w_{36}$, $w_1w_4w_6w_3$, $w_1w_4w_6w_3w_{36}$;
    \item[{\em (ii)}] there exists a lift of $w$ to $N(G,T)$ of order $|w|$.
  \end{itemize}
\end{theorem}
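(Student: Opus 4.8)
The plan is to derive Theorem~\ref{th:E6} from the main theorem of~\cite{GS} together with a short explicit computation for finitely many conjugacy classes, so most of the work is organisational. \textbf{Part (i).} The main theorem of~\cite{GS} classifies, for each of the groups $E_6^\varepsilon(q)$ in its adjoint and its simply-connected version, the maximal tori whose algebraic normalizer admits a complement. Part~(i) is obtained from it by a notational translation: one matches the description of elements of $W$ used in~\cite{GS} with the enumeration of conjugacy classes from~\cite{DerF}, and one records that the resulting list is the same for the two isogeny types and is empty when $q$ is even. (For $E_6$ the centre of the simply-connected group has order coprime to $2$ while the obstruction to splitting is controlled by $2$-torsion, which is why the isogeny type plays no role here; the even-characteristic case is already subsumed by~\cite{GS}.)

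\textbf{Part (ii).} Write $\pi\colon N(G,T)\to N(G,T)/T$ for the quotient map; recall that $N(G,T)/T$ is, for $\varepsilon=+$, the centralizer $C_W(w)$ and, for $\varepsilon=-$, its analogue twisted by the graph automorphism, and that it contains the image of $w$. If $T$ has a complement $K$ in $N(G,T)$, then $\pi|_K$ is an isomorphism onto $N(G,T)/T$, so $K$ contains an element mapping to $w$, and that element is a lift of $w$ of order exactly $|w|$. By part~(i) this already proves~(ii) for every conjugacy class of $W$ outside the explicit list there, so it remains to treat that finite list. For such $w$ the lifts of $w$ form a coset $T\dot w$, and $(t\dot w)^{|w|}=N_w(t)\,\dot w^{|w|}$, where $N_w\colon T\to T^{w}$ is the norm map along $\langle w\rangle$ and $\dot w^{|w|}\in T^{w}$; hence $w$ has a lift of order $|w|$ if and only if $\dot w^{|w|}$ lies in the image of $N_w$, equivalently if and only if the extension $1\to T\to \pi^{-1}(\langle w\rangle)\to\langle w\rangle\to 1$ splits. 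This condition is strictly weaker than splitting of $N(G,T)$ over $T$, which is exactly why such lifts may still exist in the non-split cases of~(i). Taking $\dot w$ to be the canonical representative coming from the extended Weyl group of Tits, $\dot w^{|w|}$ becomes an explicit element built from values $\alpha^\vee(-1)$, and in each case on the list ($1$ and $w_1$ being immediate) one checks that it is a norm; for any class on the list that is elliptic or regular one may instead invoke~\cite{AdamsHe}, choosing the lift it produces to be $\sigma$-fixed.

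The step I expect to be the main obstacle is this last verification: for each of the few non-split classes, and for each of the relevant congruences on $q$ (together with the graph-automorphism twist when $\varepsilon=-$), one must compute $\dot w^{|w|}$, identify $T^{w}$ and the norm map $N_w$ on the \emph{finite} torus $T$, and check both that $\dot w^{|w|}$ is a norm and that the resulting lift lies in the finite group $N(G,T)$ rather than merely in $N_{\overline G}(\overline T)$. Matching the notation of~\cite{GS} with that of~\cite{DerF} in~(i), and confirming that the classification is genuinely independent of the isogeny type, is routine but must likewise be done with care.
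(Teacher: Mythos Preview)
For part~(i) you match the paper exactly: it simply records Theorem~\ref{th:E6} as a consequence of the main theorem of~\cite{GS}, with the twisted case $\varepsilon=-$ obtained (see Section~6) by rerunning the arguments of~\cite{GS} with $q$ replaced by $-q$. Your parenthetical explanation of why the isogeny type is irrelevant (the centre of the simply-connected $E_6$ has order dividing~$3$ and hence does not meet the elementary abelian $2$-group~$\mathcal{H}$ where the obstruction lives) is the correct one.

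For part~(ii) the paper likewise defers to~\cite{GS}, so strictly there is no proof here to compare against. Your argument is nevertheless different from the method actually used in~\cite{GS} and, visibly, in Sections~4--5 of the present paper for $E_7$ and $E_8$. There one neither separates into ``complement exists / does not'', nor analyses the norm map $N_w$ on the finite torus, nor invokes~\cite{AdamsHe}. Instead, for every representative $w$ one simply writes down a concrete element $n\in\mathcal{T}$ (the canonical Tits lift $n_w$, possibly multiplied by some $h\in\mathcal{H}$) with $\pi(n)=w$ and verifies $|n|=|w|$ by a direct MAGMA computation; compare Table~\ref{lifts_E7} and the ``lift'' columns of Tables~\ref{t:nonsplit_E7} and~\ref{t:nonsplit_E8}. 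Because every element of $\mathcal{T}$ is fixed by the split Frobenius, one has $n\in\overline{N}_{\sigma n}$ automatically (Remark~\ref{r:nonsplit}), so the issue you single out as the main obstacle---ensuring that the lift lies in the \emph{finite} normalizer and not merely in $N_{\overline{G}}(\overline{T})$---never arises. Your norm-map criterion becomes equivalent to this once $t$ is restricted to $\mathcal{H}$, but by formulating it over the full $q$-dependent torus $T$ you make the verification depend on $q$, whereas the paper's route is a single uniform check inside the $q$-independent finite group~$\mathcal{T}$.
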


According to~\cite{DerF}, if $l=7$ then $r_{16}=r_2+r_4+r_5$ and $r_{53}=r_1+2r_2+2r_3+3r_4+2r_5+r_6$.
Denote by $U$ the subgroup of $W$ generated by $w_1,w_2,w_3,w_4,w_5,w_6$.
Note that $U$ is isomorphic to the Weyl group of type $E_6$. Given a root system $\Phi$, we denote by $\Phi^{sc}(q)$ and $\Phi^{ad}(q)$ a simply-connected and adjoint group, respectively.

\begin{theorem}\label{th:E7}
Let $G=E_7(q)$ (adjoint or simply-connected) with the Weyl group $W$ and $w_0$ be the central involution of $W$. Consider a maximal torus $T$ of $G$ corresponding to an element $w$ of $W$. Then the following statements hold.
\begin{itemize}
  \item[{\em (i)}] If $G=E_7^{ad}(q)$ then $w$ has a lift of order $|w|$ to $N(G,T)$.
  Moreover, $T$ does not have a complement in~$N(G,T)$ if and only if $q$ is odd and $w$ or $ww_0$ is conjugate to one of the following:
  1, $w_1$, $w_1w_2$, $w_2w_3w_5$, $w_1w_3w_4$, $w_1w_4w_6w_{53}$, $w_1w_4w_6w_3$, 
$w_3w_2w_4w_{16}$, $w_1w_4w_6w_3w_{53}$,  $w_3w_2w_4w_{16}w_{7}$. 
  \item[{\em (ii)}] If $G=E_7^{sc}(q)$ then $T$ does not have a complement in~$N(G,T)$ if and only if $q$ is odd. Moreover, $w$ has a lift to $N(G,T)$ of order $|w|$ if and only if either $q$ is even or $w$ satisfies one of the following:
  \begin{itemize}
  \item[{\em (a)}] $|w|$ is divisible by $4$;
  \item[{\em (b)}] $|w|$ is odd;
  \item[{\em (c)}] $w$ is conjugate in $W$ to an element of $U$.
   \end{itemize}
  \end{itemize}
\end{theorem}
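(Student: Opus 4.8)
The plan is to use throughout the twisted‑Frobenius model of $N(G,T)$. Fix a $\sigma$‑stable maximal torus $\overline T$ inside a $\sigma$‑stable Borel subgroup of $\overline G$, put $N=N_{\overline G}(\overline T)$, and let $\widetilde W\le N$ be Tits' extended Weyl group, generated by the canonical lifts $n_r=x_r(1)x_{-r}(-1)x_r(1)$ of the fundamental reflections; let $H=\widetilde W\cap\overline T$ be the elementary abelian $2$‑group generated by the $h_r(-1)$, which is trivial when $p=2$ and is fixed pointwise by $\sigma$. For $w\in W$ let $n_w\in\widetilde W$ be its Tits lift and $\sigma_w=\operatorname{ad}(n_w)\circ\sigma$; by the standard construction (a Lang element $g$ with $g^{-1}\sigma(g)=n_w$) one has $T\cong\overline T^{\sigma_w}$ and $N(G,T)\cong N^{\sigma_w}$, and since $H^1(\sigma_w,\overline T)=1$ there is an exact sequence $1\to T\to N(G,T)\to C_W(w)\to1$ in which $n_w\in N^{\sigma_w}$ maps onto $w$. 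I would first recall from \cite{Galt2,GS} the reduction of the splitting question for this sequence to the problem of choosing $\sigma_w$‑stable, relation‑respecting lifts inside $\widetilde W$ modulo $H$, together with the $\mathbb{Z}\langle w\rangle$‑module structure of $\overline T^{\sigma_w}$. This settles at once the case $q$ even: then $H=1$, so $\widetilde W\cong W$ is a $\sigma$‑stable complement to $\overline T$ in $N$, and a direct check gives $N^{\sigma_w}=\overline T^{\sigma_w}\rtimes\widetilde W^{\sigma_w}$; hence $T$ has a complement and $n_w\in\widetilde W^{\sigma_w}$ is a lift of $w$ of order $|w|$. So assume henceforth that $q$ is odd.

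The cornerstone of part~(ii) is the longest element $w_0=-1\in W$. Being central it lies in $C_W(w)$ for every $w$, and each of its lifts to $N(G,T)$ has the form $tn_{w_0}$ with $t\in\overline T^{\sigma_w}$; since $w_0$ inverts $\overline T$, $(tn_{w_0})^2=t\cdot{}^{w_0}t\cdot n_{w_0}^2=n_{w_0}^2$, and in $E_7^{sc}$ the element $n_{w_0}^2$ equals the generator $z$ of the (order‑$2$) center — the classical reason $E_7$ is exceptional here. For $q$ odd $z\neq1$ lies in $T$, so no lift of the involution $w_0$ has order $2$, whence $C_W(w)$ has no complement. This proves the first assertion of~(ii): for $q$ odd, $T$ never has a complement in $N(G,T)$, for any $w$. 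In $E_7^{ad}$ the element $z$ becomes trivial, so this obstruction disappears, and non‑splitting there survives only for finitely many classes; to determine them, and to verify nonvanishing of the corresponding class in $H^2(C_W(w),T)$, I would argue exactly as for Theorem~\ref{th:E6} in \cite{GS}, working inside the relevant subsystem subgroups (of types $E_6$, $A_1D_6$, $A_7$). One finds the offending set of classes closed under $w\mapsto ww_0$, which accounts for the ``$w$ or $ww_0$'' in~(i); and the ``always'' assertion on order‑$|w|$ lifts in $E_7^{ad}$ follows along the same lines, reducing elliptic classes to \cite{AdamsHe} and non‑elliptic ones to proper Levi subgroups.

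For the order‑$|w|$ criterion in~(ii), the reformulation is that a lift of $w$ of order $|w|$ in $N(G,T)$ is an element $tn_w$ with $t\in T=\overline T^{\sigma_w}$ and $(tn_w)^{|w|}=N_w(t)\,n_w^{|w|}=1$, where $N_w(t)=\prod_{i=0}^{|w|-1}{}^{w^i}t$ and $n_w^{|w|}$ is a $w$‑fixed element of $H$; so one must decide whether $n_w^{-|w|}\in N_w(\overline T^{\sigma_w})$. Three regimes arise. If $|w|$ is odd, taking $t=n_w^{-|w|}$ gives $tn_w=n_w^{\,1-|w|}$, a lift of $w$ of order exactly $|w|$, whether $n_w$ has order $|w|$ or $2|w|$ in $\widetilde W$; in particular a lift always exists, which is consistent with the condition only if all odd‑order classes satisfy $l(w)<l(ww_0)$ for the chosen representatives — a point to be checked against \cite{DerF}. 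If $-1\in\langle w\rangle$ — which, by a parity argument on the eigenvalues of $w$ on the reflection representation, forces $|w|\equiv2\pmod4$ — then $N_w=(1+w+\cdots+w^{|w|/2-1})(1+w^{|w|/2})$ vanishes identically on $\overline T$, while $n_w^{|w|}=(n_w^{|w|/2})^2=z$ because $n_w^{|w|/2}$ lifts $w^{|w|/2}=w_0$; hence no lift of $w$ of order $|w|$ exists at all (in $E_7^{ad}$ the same computation gives $n_w^{|w|}=1$, recovering the ``always'' clause), and one must check that for such $w$ one indeed has $l(w)\geq l(ww_0)$. In the remaining case ($-1\notin\langle w\rangle$, $|w|$ even) one has to compute $n_w^{|w|}\in H$ explicitly and decide its membership in $N_w(\overline T^{\sigma_w})$, the claim being that this succeeds precisely when $4\mid|w|$ or $l(w)<l(ww_0)$.

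The main obstacle is this last point, together with the length bookkeeping it forces. My plan is to work class by class from the data in \cite{DerF}: for each class compute $n_w^{|w|}$ explicitly in $\widetilde W$, identify which element of $H$ it is, and then read off its membership in $N_w(\overline T^{\sigma_w})$ from the elementary divisors, reduced mod~$2$, of the integer operator $1+w+\cdots+w^{|w|-1}$ on the character lattice — watching the mild dependence of the $2$‑part of $\overline T^{\sigma_w}$ on $q$ — together with the position of the relevant element of $H$ relative to the $h_r(-1)$ and to $z$; the regular and elliptic classes serve as a check against \cite{AdamsHe}, and the whole scheme is run in parallel for $E_7^{ad}$. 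One then has to verify that the outcome matches the combinatorial condition exactly: that odd‑order classes and classes with $4\mid|w|$ always admit an order‑$|w|$ lift, that the classes with $-1\in\langle w\rangle$ never do and all have $l(w)\geq l(ww_0)$, and — the hardest of these — that among the classes with $|w|\equiv2\pmod4$ and $-1\notin\langle w\rangle$, solvability of $N_w(t)=n_w^{-|w|}$ coincides with $l(w)<l(ww_0)$, i.e.\ (using $l(ww_0)=l(w_0)-l(w)$) with $2l(w)<63$ for the chosen representative. This coincidence between a $2$‑adic invariant of the finite torus and a length statistic of $w$ is the crux, and I expect establishing it to require the bulk of the effort.
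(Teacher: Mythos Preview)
Your treatment of part~(ii) is essentially what the paper does, only phrased in norm--map language. The non-splitting argument via $n_{w_0}^2=z\neq1$ is exactly the paper's; your parity observation that $-1\in\langle w\rangle$ forces $|w|\equiv2\pmod4$ in $E_7$ (since $\det(-1)=(-1)^7=-1$) is correct and pleasant, and the odd-order trick $tn_w=n_w^{1-|w|}$ is fine. But note that your second regime covers very little: for most of the classes $ww_0$ with $|ww_0|\equiv2\pmod4$ one has $-1\notin\langle ww_0\rangle$ (e.g.\ $w=w_1$), so almost everything lands in your third ``remaining'' regime. The paper handles those uniformly by observing that the chosen short representatives lie in the parabolic $\langle w_1,\dots,w_6\rangle$, so that conjugation by $n$ fixes the seventh coordinate of any torus element, while $n_0^2=h_2h_5h_7$ contributes a $-1$ there that no product $\prod H^{(n_0n)^i}$ can cancel. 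You should expect to need an argument of this kind; the abstract criterion $n_w^{-|w|}\in N_w(\overline T^{\sigma_w})$ is correct but does not by itself explain the coincidence with $l(w)<l(ww_0)$.

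For part~(i) there is a genuine gap. Saying you will ``argue exactly as for Theorem~\ref{th:E6}'' and pass to subsystem subgroups is not a plan for either direction. For non-splitting in $E_7^{ad}$ the paper does \emph{not} compute $H^2(C_W(w),T)$; it proves a single concrete obstruction lemma: if $C_W(w)$ contains the particular Klein group $\langle w_2w_5,\,w_{49},\,w_{63}\rangle$ and the natural lifts $n_2n_5,n_{49},n_{63}$ lie in $\overline N_{\sigma n}$, then no complement exists (the proof is an explicit contradiction among the coordinates $\alpha_i,\beta_i,\mu_i$ of three putative lifts). One then checks, class by class, that each $w$ on the list is conjugate to some $w'$ whose centralizer contains this triple. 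Your proposal gives no hint of such a mechanism. Conversely, for the twenty splitting classes the paper produces explicit complements: for each $C_W(w)$ it writes down a presentation and then exhibits elements of $\mathcal T$ (occasionally multiplied by carefully chosen torus elements, with $\alpha^2=-1$ or $\delta^{q-1}=-1$, in a way that genuinely depends on $q\bmod4$) satisfying those relations inside $\overline N_{\sigma n}$. Neither a cohomology vanishing argument nor a reduction to Levi subgroups will manufacture these complements; this is the bulk of the work, and your proposal does not address it.
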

\noindent{\bf Remark.} {\it In fact, it follows from Table~\ref{t:main:E7} that
the order of an element $w\in W$ divides 4 if and only if $w$ or $ww_0$
is conjugate to one of the elements listed in the claim $(i)$.}

According to~\cite{DerF}, if $l=8$ then $r_{18}=r_2+r_4+r_5$, $r_{26}=r_2+r_4+r_5+r_6$, $r_{46}=r_1+r_2+r_3+r_4+r_5+r_6+r_7$, $r_{69}=r_1+2r_2+2r_3+3r_4+2r_5+r_6$, $r_{74}=r_2+r_3+2r_4+2r_5+2r_6+2r_7+r_8$, and $r_{120}=2r_1+3r_2+4r_3+6r_4+5r_5+4r_6+3r_7+2r_8$.

\begin{theorem}\label{th:E8}
Let $G=E_8(q)$ with the Weyl group $W$ and $w_0$ be the central involution of $W$.
Consider a maximal torus $T$ of $G$ corresponding to an element $w$ of $W$. Then the following statements hold.
\begin{itemize}
  \item[{\em (i)}] $T$ does not have a complement in~$N(G,T)$ if and only if $q$ is odd and $w$ or $ww_0$ is conjugate to one of the following: 1, $w_1$, $w_1w_2$, $w_3w_1$, $w_2w_3w_5$, $w_1w_3w_5$, $w_1w_3w_4$, $w_1w_4w_6w_{69}$, $w_1w_2w_3w_5$, $w_1w_4w_6w_3$, 
$w_3w_2w_5w_4$, $w_3w_2w_4w_{18}$, $w_1w_4w_6w_3w_{69}$, $w_2w_5w_3w_4w_6$, $w_{26}w_5w_4w_3w_2$, $w_1w_4w_6w_3w_7$, $w_3w_2w_4w_{18}w_7$,  $w_{46}w_3w_5w_1w_4w_6$, $w_2w_3w_5w_7$, $w_{74}w_3w_2w_5w_4$, $w_8w_1w_4w_6w_3$, $w_1w_2w_3w_6w_8w_7$,
$w_1w_4w_3w_7w_6w_8$, $w_4w_8w_2w_5w_7w_{120}$, $w_2w_3w_4w_8w_7w_{18}$, $w_2w_3w_4w_5w_6w_8$, $w_2w_4w_5w_6w_7w_8w_{120}$,
$w_2w_3w_4w_7w_{120}w_8w_{18}$, or $w_2w_3w_4w_7w_{120}w_{18}w_8w_{74}$;
  \item[{\em (ii)}] the element $w$ has a lift to $N(G,T)$ of order $|w|$.
  \end{itemize}
\end{theorem}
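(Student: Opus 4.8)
\medskip
\noindent\textbf{Proof strategy.} The plan is to reduce both assertions to an explicit finite computation in the extended Weyl group of $W=W(E_8)$. The first simplification is that we may assume $q$ odd: when $p=2$ every maximal torus of $G$ has odd order, while Tits's presentation of $N_{\overline G}(\overline T)$ --- generators $n_{r_1},\dots,n_{r_8}$ subject to the braid relations together with $n_{r_i}^2=h_{r_i}(-1)$ (see \cite{Tits}) --- shows that both the splitting of $N(G,T)$ over $T$ and the existence of a lift of prescribed order are controlled by a cocycle with values in the $2$-torsion of $\overline T$, and such a cocycle becomes trivial on any odd order group. From now on $q$ is odd. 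Since $\overline G$ of type $E_8$ is simultaneously simply connected and adjoint and $\sigma$ acts trivially on $W$, $\sigma$-conjugacy classes of $W$ are ordinary conjugacy classes; fix a representative $w$ of each, put $T=T_w$, $N=N(G,T_w)$, and let $\dot w$ be the canonical lift of $w$ in the extended Weyl group $\widetilde W=\langle n_{r_1},\dots,n_{r_8}\rangle\leqslant N_{\overline G}(\overline T)$.

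Both statements will then be turned into finite computations. Because $\dot w$ maps to $w$, the element $\dot w^{|w|}$ lies in $\widetilde W\cap\overline T$, an elementary abelian $2$-group contained in $\overline T[2]\cong Y/2Y$, where $Y$ is the cocharacter lattice; moreover $\dot w^{|w|}$ is fixed by $w$, and we compute it explicitly as a product of the elements $h_r(-1)$ read off from a reduced expression for $w$. A coset representative $t\dot w$, with $t\in\overline T$, has order $|w|$ exactly when $\dot w^{|w|}=\bigl(\prod_{i=0}^{|w|-1}{}^{w^i}t\bigr)^{-1}$; combining the surjectivity of the Lang map on $\overline T$ with the assertion --- to be checked class by class --- that the $w$-fixed involution $\dot w^{|w|}$ lies in the image of the relative norm on $\overline T$, one obtains a lift of order $|w|$ inside $G$, which proves $(ii)$ (the regular and elliptic classes having already been settled by Adams and He \cite{AdamsHe}). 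For $(i)$, the extension $1\to T\to N\to W\to 1$ splits if and only if its class $c_w\in H^2(W,T)$ vanishes; the odd part of $c_w$ always vanishes, and its $2$-part is the image of an explicit class built from the $h_r(-1)$-cocycle, so deciding whether $c_w=0$ is linear algebra over $\F_2$ with $W$ acting on $Y/2Y$. Finally, replacing $w$ by $ww_0$, where $w_0=-1$ is central, leaves the relevant $\F_2$-data unchanged, which explains the symmetry $w\leftrightarrow ww_0$ in the statement.

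The key point is that these computations are reduced to a short list of classes by passing to subsystem subgroups. If $\Phi'\subseteq E_8$ is a $\sigma$-stable subsystem, the corresponding subsystem subgroup $\overline G'$ contains $\overline T$, so $N_{\overline G'}(\overline T)\cap G$ is the full preimage in $N$ of $W'=W_{\Phi'}$ and carries the same torus $T$; hence $\mathrm{res}_{W'}(c_w)$ is the obstruction class of the group of Lie type $\overline G'_{\sigma}$, and in particular $T_w$ fails to have a complement in $G$ whenever it fails to have one in $\overline G'_{\sigma}$. For the $2$-part of $c_w$ this is essentially decisive: a Sylow $2$-subgroup of $W$ lies inside the subgroup $W(D_8)\leqslant W$, so $c_w=0$ as soon as its restriction to $W(D_8)$ vanishes, and for every $w$ conjugate into $W(D_8)$ the latter is decided by the already known $D_8$-results. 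The classes of $W$ not conjugate into $W(D_8)$ --- those whose characteristic polynomial involves a cyclotomic factor $\Phi_d$ with $d\in\{9,15,20,24,30,\dots\}$, such as the Coxeter class --- are finitely many and are handled by the direct computation above; the subsystems of types $E_7$, $E_6$ and $A_n$ let one read off the behaviour of the numerous non-elliptic $w$ from Theorems~\ref{th:E6} and~\ref{th:E7} and the known results for types $A_n$ and $D_n$ (\cite{Galt2,Galt3,Galt4,GS}), and serve as a cross-check. Throughout this step one must keep track of the isogeny type of each subsystem subgroup (for instance, whether an $A_1$- or $A_2$-subsystem gives a simply connected or an adjoint group), since $\widetilde W\cap\overline T$, and hence the obstruction, depends on it. Collating the outcomes yields precisely the list in $(i)$.

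The main obstacle is the core computation itself: for each of the finitely many remaining conjugacy classes of $W(E_8)$ one must, from a chosen reduced expression, evaluate $\dot w^{|w|}$ in $Y/2Y$, compute the image of the relative norm and the class $c_w$, and do this consistently across classes. Each step is elementary $\F_2$-linear algebra, but the number of classes and the bookkeeping (choice of reduced words, the $W$-action on $Y/2Y$, the isogeny data of the subsystem subgroups) make it delicate, and in practice it is carried out with computer assistance. A secondary difficulty is the ``if'' direction of $(i)$: one has to be sure that the subsystem restrictions together with the direct computations pin down $c_w$ completely --- that the classes detected as obstructed are exactly those with $c_w\neq0$ --- rather than merely bounding the obstructed set.
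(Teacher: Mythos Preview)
Your proposal contains a genuine structural error that undermines the main reduction. The extension whose splitting you must analyse is
\[
1 \longrightarrow T_w \longrightarrow N(G,T_w) \longrightarrow C_W(w) \longrightarrow 1,
\]
not $1\to T\to N\to W\to 1$; this is Proposition~\ref{normalizer} in the paper. The obstruction class therefore lives in $H^2(C_W(w),T_w)$, with both the group and the module depending on $w$. Your Sylow argument --- that a Sylow $2$-subgroup of $W(E_8)$ lies in $W(D_8)$, hence $c_w$ vanishes once its restriction to $W(D_8)$ does --- is aimed at the wrong cohomology group. What you would actually need is that a Sylow $2$-subgroup of $C_{W(E_8)}(w)$ lies in $C_{W(D_8)}(w)$ for each relevant $w$, and there is no reason for this: the index $[C_{W(E_8)}(w):C_{W(D_8)}(w)]$ varies with $w$ and is not generally odd. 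Consequently the reduction ``splitting for $D_8$ implies splitting for $E_8$'' fails, and you are left with no mechanism for the ``if'' direction of (i) beyond the direct computation you flag as the main obstacle. The subsystem idea does give something in the \emph{other} direction --- if the extension restricted to $C_{W'}(w)$ fails to split, then so does the full extension --- but this only helps you populate the non-splitting list, not certify that the remaining classes split.

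The paper's proof proceeds quite differently and is almost entirely constructive. For non-splitting, it isolates a single explicit subgroup $\langle w_2w_5,w_{61},w_{97}\rangle\cong\mathbb{Z}_2^3$ of $C_W(w)$ and proves by a short hand calculation (Lemma~\ref{l:torie8}) that no complement can exist once this subgroup is present with suitable lifts in $\overline N_{\sigma n}$; this handles all but four of the non-splitting classes, and those four are dispatched by similar ad hoc contradictions. For splitting, the paper exhibits for each remaining class an explicit complement inside the Tits group $\mathcal T$: it writes down generators in $\mathcal T\cap\overline N_{\sigma n}$ and checks in MAGMA that they satisfy a presentation of $C_W(w)$. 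The symmetry $w\leftrightarrow ww_0$ is handled by the elementary Lemma~\ref{l:ww0}. Your treatment of part (ii) via the relative norm is closer in spirit to what the paper does (explicit lifts in $\mathcal T$, Table~\ref{t:nonsplit_E8}), and that part of your outline is workable; but for part (i) your cohomological reduction does not go through as written, and you would in the end have to carry out essentially the same class-by-class verification the paper performs.
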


We illustrate the results of these theorems in Tables~\ref{t:main:E7}, \ref{t:main:E8}, and \ref{t:main} with some additional information about the maximal tori. 

This paper is organized as follows. In Section 2 we recall notation and basic facts about algebraic groups. In Section 3 we prove auxiliary results and explain how we use MAGMA in the proofs. Section 4--6 are devoted to the proofs of the main results. 

\section{Notation and preliminary results}

By $q$ we always denote some power of a prime $p$, $\overline{\F}_p$ is an algebraic closure of a finite field $\F_p$ of order $p$. The symmetric group on $n$ elements is denoted by $S_n$ and the cyclic group of order $n$ by $\mathbb{Z}_n$. Following~\cite{CarSG}, we write $x^y=yxy^{-1}$ and $[x,y]=y^xy^{-1}$. 

By $\overline{G}$ we denote a simple connected linear algebraic group over $\overline{\F}_p$ 
with the root system $\Phi$ of Lie type $E_l$, where $l\in\{6,7,8\}$. We assume that 
$\Delta=\{r_1,\ldots,r_l\}$ is a fundamental system of $\Phi$.

We use notation from~\cite{CarSG}, in particular, the definitions of elements $x_r(t)$,
$n_r(\lambda)$ ($r\in\Phi$, $t\in\overline{\mathbb{F}}_p$, $\lambda\in\overline{\mathbb{F}}_p^\ast$). In contrast to~\cite{CarSG}, the Magma convention is that $h_r(\lambda) = n_r(-1)n_r(\lambda)$ and we will follow this definition. According to~\cite{CarSG}, $\overline{G}$ is generated by $x_r(t)$: $\overline{G}=\langle x_r(t)~|~r\in\Phi, t\in\overline{\mathbb{F}}_p\rangle$. The group 
$\overline{T}=\langle h_r(\lambda)~|~r\in\Delta,\lambda\in\overline{\mathbb{F}}_p^\ast \rangle$ is a maximal torus of $\overline{G}$ and $\overline{N}=\langle\overline{T}, n_r~|~r\in\Delta \rangle$, where $n_r=n_r(1)$, is the normalizer of $\overline{T}$ in $\overline{G}$ \cite[\S 7.1, 7.2]{CarSG}. By $W$ we denote the Weyl group $\overline{N}/\overline{T}$ and by $\pi$ the natural homomorphism from $\overline{N}$ onto $W$. Throughout, we denote by $\sigma$
the classical Frobenius automorphism defined on the generators as follows:
$$ x_r(t)\mapsto x_r(t^q), r\in\Phi, t\in\overline{\mathbb{F}}_p^\ast. $$
In particular, we have $n_r^\sigma=n_r$ and $(h_r(\lambda))^\sigma=h_r(\lambda^\sigma)$.
Define the action of $\sigma$ on $W$ in the natural way. Elements $w_1, w_2\in W$ are called {\it $\sigma$-conjugate} if $w_1=(w^{-1})^{\sigma}w_2w$ for some $w$ in $W$.
The following statement holds for $G=\overline{G}_\sigma$.


\begin{proposition}{\em\cite[Propositions 3.3.1, 3.3.3]{Car}}\label{torus}.
A torus $\overline{T}^g$ is $\sigma$-stable if and only if $g^{\sigma}g^{-1}\in\overline{N}$. The map $\overline{T}^g\mapsto\pi(g^{\sigma}g^{-1})$ determines a bijection between the $G$-classes of $\sigma$-stable maximal tori of $\overline{G}$ and the $\sigma$-conjugacy classes of $W$.
\end{proposition}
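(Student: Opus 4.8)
This is a classical fact (it is quoted above from~\cite{Car}), so the plan is to recall the standard argument, whose engine is the Lang--Steinberg theorem. Write $\overline{T}^g=g^{-1}\overline{T}g$. First I would settle the stability criterion by a one-line computation: since $\sigma$ is an endomorphism of $\overline{G}$ and $\overline{T}^{\sigma}=\overline{T}$, one has $(\overline{T}^g)^{\sigma}=(g^{\sigma})^{-1}\overline{T}^{\sigma}g^{\sigma}=\overline{T}^{g^{\sigma}}$ for every $g\in\overline{G}$, so $\overline{T}^g$ is $\sigma$-stable iff $\overline{T}^{g^{\sigma}}=\overline{T}^g$, i.e. iff $g^{\sigma}g^{-1}$ normalizes $\overline{T}$, that is $g^{\sigma}g^{-1}\in\overline{N}$. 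For such $g$ put $n=g^{\sigma}g^{-1}\in\overline{N}$ and $w=\pi(n)\in W$.

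Next I would check that $\overline{T}^g\mapsto[w]$ descends to a well-defined map from $G$-classes of $\sigma$-stable maximal tori to $\sigma$-conjugacy classes of $W$. If $\overline{T}^{g'}=\overline{T}^g$ then $g'=mg$ with $m\in\overline{N}$, so $(g')^{\sigma}(g')^{-1}=m^{\sigma}nm^{-1}$ and $\pi((g')^{\sigma}(g')^{-1})=\pi(m)^{\sigma}\,w\,\pi(m)^{-1}$, which is $\sigma$-conjugate to $w$ with conjugating element $\pi(m)^{-1}$. If $x\in G$ then $x^{\sigma}=x$, hence $(gx)^{\sigma}(gx)^{-1}=g^{\sigma}x^{\sigma}x^{-1}g^{-1}=n$, so passing to the $G$-conjugate torus $\overline{T}^{gx}$ does not even change $n$. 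Together these two observations show that $[w]$ depends only on the $G$-class of the torus.

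For surjectivity, given $w\in W$ I would lift it to some $n\in\overline{N}$ and, using that $\overline{G}$ is connected, invoke Lang--Steinberg to get $g\in\overline{G}$ with $g^{\sigma}g^{-1}=n$; then $\overline{T}^g$ is $\sigma$-stable and maps to $[w]$. For injectivity, suppose $\overline{T}^g,\overline{T}^{g'}$ are $\sigma$-stable with $\pi((g')^{\sigma}(g')^{-1})$ $\sigma$-conjugate to $w=\pi(g^{\sigma}g^{-1})$. Using the defining relation of $\sigma$-conjugacy I would pick $m\in\overline{N}$ so that, after replacing $g'$ by $mg'$ (which leaves $\overline{T}^{g'}$ unchanged), we have $\pi((g')^{\sigma}(g')^{-1})=\pi(g^{\sigma}g^{-1})$, i.e. $(g')^{\sigma}(g')^{-1}=t\,g^{\sigma}g^{-1}$ with $t\in\overline{T}$. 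Then $y:=g^{-1}g'$ satisfies $y^{\sigma}y^{-1}=(g^{\sigma})^{-1}t\,g^{\sigma}\in(g^{\sigma})^{-1}\overline{T}g^{\sigma}=\overline{T}^{g^{\sigma}}=\overline{T}^g$, the last equality by $\sigma$-stability of $\overline{T}^g$. Since $\overline{T}^g$ is connected and $\sigma$-stable, Lang--Steinberg applied inside $\overline{T}^g$ gives $s\in\overline{T}^g$ with $s^{\sigma}s^{-1}=y^{\sigma}y^{-1}$; then $x:=s^{-1}y$ has $x^{\sigma}=x$, so $x\in\overline{G}_\sigma$, and $\overline{T}^{gx}=x^{-1}\overline{T}^g x=y^{-1}s\,\overline{T}^g\,s^{-1}y=y^{-1}\overline{T}^g y=(g')^{-1}\overline{T}g'=\overline{T}^{g'}$, using $s\in\overline{T}^g$ and $gy=g'$. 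Hence the two tori are conjugate in $\overline{G}_\sigma$.

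The argument has no single hard step. The points one must get exactly right are the two separate uses of Lang--Steinberg --- for the connected group $\overline{G}$ in the surjectivity part, and for the connected torus $\overline{T}^g$ in the injectivity part --- and the consistent left-versus-right bookkeeping of conjugation, which is precisely what forces the cocycle to have the shape $g^{\sigma}g^{-1}$. The only genuinely extra input needed when $G$ is a proper subgroup with $O^{p'}(\overline{G}_{\sigma})\leqslant G\leqslant\overline{G}_{\sigma}$ is that $\overline{G}_{\sigma}$-conjugacy and $G$-conjugacy of $\sigma$-stable maximal tori coincide, which is part of the statement as cited in~\cite{Car}.
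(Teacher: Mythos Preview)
The paper does not give its own proof of this proposition; it is simply quoted from~\cite{Car}. Your sketch is the standard Lang--Steinberg argument and is correct, including the two separate applications (to $\overline{G}$ for surjectivity and to the connected $\sigma$-stable torus $\overline{T}^g$ for injectivity) and the final remark about passing from $\overline{G}_\sigma$-conjugacy to $G$-conjugacy. Your explicit choice of convention $\overline{T}^g=g^{-1}\overline{T}g$ is the one under which the cocycle $g^{\sigma}g^{-1}$ lands in $\overline{N}$, consistent with Carter's formulation being cited here.
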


It follows from Proposition~\ref{torus} that the cyclic structure of a torus $(\overline{T}^g)_{\sigma}$ in $G$ and of the corresponding tori
in the sections of $G$ is determined only by a $\sigma$-conjugacy class of the element $\pi(g^{\sigma}g^{-1})$.

\begin{proposition}{\em\cite[Lemma 1.2]{ButGre}}\label{prop2.5}.
Let $n=g^{\sigma}g^{-1}\in\overline{N}$. Then $(\overline{T}^g)_\sigma=(\overline{T}_{\sigma n})^g$, where $n$ acts on $\overline{T}$ by conjugation.
\end{proposition}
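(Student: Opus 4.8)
The plan is a short self-contained computation that simply unwinds the conjugation conventions. Throughout I use the exponential notation $x^g=g^{-1}xg$ and write $x^\sigma$ for the image of $x$ under $\sigma$, consistently with Proposition~\ref{torus}: indeed $\overline{T}^g=g^{-1}\overline{T}g$ is $\sigma$-stable precisely because $(\overline{T}^g)^\sigma=(g^\sigma)^{-1}\overline{T}^\sigma g^\sigma=(g^\sigma)^{-1}\overline{T}g^\sigma$ equals $\overline{T}^g$ if and only if $g^\sigma g^{-1}$ normalizes $\overline{T}$, i.e. if and only if $n=g^\sigma g^{-1}\in\overline{N}$, which is exactly the hypothesis.

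First I would rewrite the defining relation as $g^\sigma=ng$. Now fix $t\in\overline{T}$ and set $x=t^g\in\overline{T}^g$. Since $\sigma$ is an endomorphism,
$$x^\sigma=(g^{-1}tg)^\sigma=(g^\sigma)^{-1}\,t^\sigma\,g^\sigma=(ng)^{-1}t^\sigma(ng)=g^{-1}\bigl(n^{-1}t^\sigma n\bigr)g .$$
Here $t^\sigma\in\overline{T}$ because $\overline{T}$ is $\sigma$-stable, and $n\in\overline{N}$ normalizes $\overline{T}$, so $n^{-1}t^\sigma n\in\overline{T}$; denote this element by $t^{\sigma n}$, i.e. $t^{\sigma n}$ is by definition the image of $t$ under the endomorphism of $\overline{T}$ obtained by composing $\sigma$ with conjugation by $n$. (This map is again a Steinberg endomorphism of $\overline{T}$, so its fixed-point group $\overline{T}_{\sigma n}$ is finite, as it has to be.) With this notation the displayed computation reads $x^\sigma=(t^{\sigma n})^g$.

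Finally I would compare fixed points. An element $x=t^g$ of $\overline{T}^g$ lies in $(\overline{T}^g)_\sigma$ if and only if $x^\sigma=x$, i.e. $(t^{\sigma n})^g=t^g$; since conjugation by $g$ is a bijection this is equivalent to $t^{\sigma n}=t$, that is, to $t\in\overline{T}_{\sigma n}$. Letting $t$ range over $\overline{T}_{\sigma n}$ therefore yields exactly $(\overline{T}^g)_\sigma=(\overline{T}_{\sigma n})^g$, as claimed.

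There is essentially no genuine obstacle here: the statement is a formal consequence of the single identity $g^\sigma=ng$. The only points requiring a little care are the bookkeeping of left versus right conjugation and the verification that $\sigma$ followed by conjugation by $n$ does restrict to a (Steinberg) endomorphism of $\overline{T}$, which is what makes the symbol $\overline{T}_{\sigma n}$ meaningful. If one prefers the opposite exponential convention, the identical argument goes through verbatim with $n$ acting from the other side.
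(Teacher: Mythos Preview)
Your argument is correct: it is the standard two–line verification, and there is no gap. Note, however, that the paper does not give its own proof of this proposition at all; it is merely quoted from \cite{ButGre}, so there is nothing to compare against. One minor caveat on conventions: the paper explicitly adopts $x^y=yxy^{-1}$ (left conjugation) later in Section~2, whereas your computation is carried out with $x^g=g^{-1}xg$; you correctly observe that this right convention is the one under which the condition $g^\sigma g^{-1}\in\overline{N}$ from Proposition~\ref{torus} is equivalent to $\sigma$-stability of $\overline{T}^g$, and your closing remark that the same argument goes through with the opposite convention (with $n$ acting on the other side) is accurate and covers the discrepancy.
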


\begin{proposition}{\em\cite[Proposition 3.3.6]{Car}}\label{p:normalizer}.
Let $g^{\sigma}g^{-1}\in\overline{N}$ and $\pi(g^{\sigma}g^{-1})=w$. Then $$(N_{\overline{G}}({\overline{T}}^g))_{\sigma}/({\overline{T}}^g)_{\sigma}\simeq C_{W,\sigma}(w)=\{x\in W~|~(x^{-1})^{\sigma}wx=w\}.$$
\end{proposition}

By Proposition~\ref{prop2.5}, we have $({\overline{T}}^g)_{\sigma}=(\overline{T}_{\sigma n})^g$ and $(N_{\overline{G}}({\overline{T}}^g))_{\sigma}=(\overline{N}^g)_{\sigma}=(\overline{N}_{\sigma n})^g$. Hence, 

$$C_{W,\sigma}(w)\simeq (N_{\overline{G}}({\overline{T}}^g))_{\sigma}/({\overline{T}}^g)_{\sigma}=(\overline{N}_{\sigma n})^g/(\overline{T}_{\sigma n})^g\simeq\overline{N}_{\sigma n}/\overline{T}_{\sigma n}.$$

\begin{remark}\label{r:nonsplit}
Let $n$ and $w$ be as in Propositions~\ref{prop2.5} and~\ref{p:normalizer}, respectively. Suppose that $n_1=g_1^{\sigma}g_1^{-1}\in\overline{N}$ and $\pi(n_1)=w$. Since $n$ and $n_1$ act on $\overline{T}$ in the same way, we have
$({\overline{T}}^{g_1})_{\sigma}=(\overline{T}_{\sigma n_1})^{g_1}=(\overline{T}_{\sigma n})^{g_1}$ and $(N_{\overline{G}}({\overline{T}}^{g_1}))_{\sigma}=(\overline{N}^{g_1})_{\sigma}=(\overline{N}_{\sigma n_1})^{g_1}$.
Hence, 
$$C_{W,\sigma}(w)\simeq (N_{\overline{G}}({\overline{T}}^{g_1}))_{\sigma}/({\overline{T}}^{g_1})_{\sigma}=(\overline{N}_{\sigma n_1})^{g_1}/(\overline{T}_{\sigma n})^{g_1}\simeq\overline{N}_{\sigma n_1}/\overline{T}_{\sigma n}.$$
Thus, $(\overline{T}^g)_\sigma$ has a complement in its algebraic normalizer if and only if there exists a complement for $\overline{T}_{\sigma n}$ in $\overline{N}_{\sigma n_1}$ for some $n_1$ with $\pi(n_1)=w$. Similarly, if $w$ has a lift to $\overline{N}_{\sigma n_1}$ of order $|w|$
then $w$ has such a lift to $N(G,T)$ as well.
\end{remark}

\begin{lemma}\label{l:ww0}
Let $w\in W$ and $w_0\in Z(W)$. Let $T$, $T_0$ be maximal tori corresponding to $w$, $ww_0$, respectively.
Then $T$ has a complement in its algebraic normalizer if and only if $T_0$ does.
\end{lemma}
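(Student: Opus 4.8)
The plan is to reduce the statement to a property of the central involution $w_0$ together with Remark~\ref{r:nonsplit}. Since $w_0\in Z(W)$, it acts on $\overline{T}$ as an automorphism commuting with the $W$-action; in the coroot/cocharacter language $w_0$ acts as $-1$ on $Y(\overline T)$ when $W$ contains $-1$, which is exactly the case for $E_7$ and $E_8$ (and for the relevant twisted situations this is handled by the $\sigma$-conjugacy bookkeeping already set up before Proposition~\ref{torus}). The key point is that a lift of $w_0$ to $\overline N$ can be chosen to be \emph{central} in $\overline N$, or at least to normalize $\overline T$ acting by inversion, and this lift is $\sigma$-fixed; multiplying a representative of $w$ by such a lift changes $n=g^\sigma g^{-1}$ by a central element and correspondingly passes between the two tori without affecting splitting.

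Concretely, first I would fix $g$ with $n=g^\sigma g^{-1}\in\overline N$, $\pi(n)=w$, so that $T=(\overline T^g)_\sigma$ and $N(G,T)=(\overline N_{\sigma n})^g$ as recorded after Proposition~\ref{normalizer}. Next I would produce an element $\dot w_0\in\overline N$ with $\pi(\dot w_0)=w_0$ that is $\sigma$-invariant: one can take the standard Tits/Springer representative, or simply use that $w_0$ acts by $-1$ and is central, so the element $h_0\in\overline T$ together with a fixed lift gives a $\sigma$-stable representative; the existence of a $\sigma$-fixed lift follows because $w_0$ is $\sigma$-fixed (it is central, hence fixed by the graph automorphism induced by $\sigma$) and the obstruction to lifting $\sigma$-equivariantly lives in a group where it vanishes for involutions — here one can invoke the Lusztig-type statement quoted in the introduction, or just exhibit the representative directly. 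Then set $n_1=n\dot w_0$; since $\dot w_0$ is $\sigma$-fixed and central modulo $\overline T$, we get $\pi(n_1)=ww_0$ and $n_1=n\dot w_0=(g\dot w_0^{?})^\sigma(\cdots)^{-1}$ after adjusting $g$, so $n_1$ arises as $g_1^\sigma g_1^{-1}$ for a suitable $g_1$ with $\pi(g_1^\sigma g_1^{-1})=ww_0$.

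The heart of the argument is then the identification of the two twisted structures. Because $\dot w_0$ acts on $\overline T$ by inversion, the map $t\mapsto t\dot w_0$ (viewed inside $\overline N$) intertwines $\sigma n$ and $\sigma n_1$ in a way that gives an isomorphism $\overline T_{\sigma n}\cong \overline T_{\sigma n_1}$ and, more importantly, $\overline N_{\sigma n}\cong\overline N_{\sigma n_1}$ carrying one torus onto the other. Actually the cleanest route is: the element $\dot w_0$ is central in $\overline N$ (its image $w_0$ is central in $W$ and it acts on $\overline T$ by the central automorphism $-1$), so conjugation by $\dot w_0$ is trivial and the two cocycles $\sigma n$ and $\sigma n_1=\sigma n\dot w_0$ differ by the central, $\sigma$-fixed element $\dot w_0$; twisting a group by an inner-trivial central cocycle does not change the isomorphism type of the fixed-point subgroup \emph{together with} its distinguished normal subgroup. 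Hence $(\overline N_{\sigma n},\overline T_{\sigma n})\cong(\overline N_{\sigma n_1},\overline T_{\sigma n})$ as pairs, and by Remark~\ref{r:nonsplit} splitting for $T$ is equivalent to splitting for $T_0$.

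The main obstacle I anticipate is the claim that $\dot w_0$ can be taken simultaneously central in $\overline N$ and $\sigma$-fixed, and that the ``twist by a central $\sigma$-fixed element'' really is harmless at the level of the pair (normalizer, torus). For the simply-connected or adjoint groups of type $E_7,E_8$ the element $-1$ lies in $W$ so $w_0$ is the longest element and acts by inversion on $\overline T$; centrality of a suitable lift then needs a short computation with the Tits presentation (the braid relations force the standard lift of the longest element to be central up to a canonical torus element $h$, and one checks $h$ is $\sigma$-fixed). I would isolate this as the one genuine lemma, prove it by an explicit representative argument, and then the rest is the formal cocycle manipulation above combined with Remark~\ref{r:nonsplit}. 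If a fully uniform treatment is awkward, the fallback is to verify the centrality/$\sigma$-fixedness of $\dot w_0$ case-by-case for $E_7^{sc}$, $E_7^{ad}$, $E_8$ using the explicit structure constants, which is routine but not illuminating.
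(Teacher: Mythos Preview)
Your approach has a genuine gap at the step you yourself flag as the main obstacle: no lift $\dot w_0$ of $w_0$ can be central in $\overline N$. Any lift acts on $\overline T$ by inversion (since $w_0$ acts as $-1$ on the cocharacter lattice), and inversion is a nontrivial automorphism of a torus of dimension $\geq 1$ over $\overline{\F}_p$. So ``conjugation by $\dot w_0$ is trivial'' is simply false on $\overline T$, and your cocycle argument collapses. A concrete symptom: the tori $\overline T_{\sigma n}$ and $\overline T_{\sigma n\dot w_0}$ are not isomorphic in general (for $w=1$ they have orders $(q-1)^l$ and $(q+1)^l$), so there is no isomorphism of pairs $(\overline N_{\sigma n},\overline T_{\sigma n})\cong(\overline N_{\sigma n\dot w_0},\overline T_{\sigma n\dot w_0})$ to hope for. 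The fact that $n_0$ centralizes the Tits group $\mathcal T$ (which the paper does verify later) is not the same as centralizing $\overline N$.

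The paper's argument avoids this entirely by choosing the lift of $w_0$ \emph{after} assuming a complement exists, and taking it inside that complement. If $K\leqslant\overline N_{\sigma n}$ is a complement for $\overline T_{\sigma n}$, then $K\simeq C_{W,\sigma}(w)$, and since $w_0\in Z(W)\subseteq C_{W,\sigma}(w)$ its preimage $n_0\in K$ is central in $K$ (not in $\overline N$). Now for any $k\in K$ one has $k^{\sigma n n_0}=(k^{\sigma n})^{n_0}=k^{n_0}=k$, so $K\leqslant\overline N_{\sigma n n_0}$. Since $K\cap\overline T=1$ and $|K|=|C_{W,\sigma}(w)|=|C_{W,\sigma}(ww_0)|$, the same $K$ is a complement for $\overline T_{\sigma n n_0}$ in $\overline N_{\sigma n n_0}$, and Remark~\ref{r:nonsplit} finishes. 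The point is that you only need $n_0$ to centralize $K$, which is automatic, rather than all of $\overline N$, which is impossible.
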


\begin{proof}
Assume that $T=\overline{T}_{\sigma n}$ has a complement $K$ in $\overline{N}_{\sigma n}$. Then $K\simeq C_{W,\sigma}(w)$ and $w_0\in C_{W,\sigma}(w)$. Let $n,n_0$ be preimages of $w, w_0$ in $K$, respectively. For any $n_1\in K\leqslant\overline{N}_{\sigma n}$ we have $[n_0,n_1]=1$. It follows that $(n_1)^{\sigma nn_0}=n_1^{n_0}=n_1$ and $n_1\in\overline{N}_{\sigma nn_0}$. Hence, $K\leqslant\overline{N}_{\sigma nn_0}$. 
Since $\pi(n)=w$, we have $\pi(K)=\pi(\overline{T}_{\sigma n}\cdot K)= \pi(\overline{N}_{\sigma n})= C_{W,\sigma}(w)$. Therefore, $\pi|_K:K\rightarrow W$ is an embedding and $K\cap\overline{T}=1$. In particular, $K\cap\overline{T}_{\sigma nn_0}=1$.
Since $\overline{N}_{\sigma nn_0}/\overline{T}_{\sigma nn_0}\simeq C_{W,\sigma}(ww_0)=C_{W,\sigma}(w)$, we have $K$ is a complement for $\overline{T}_{\sigma nn_0}$ in $\overline{N}_{\sigma nn_0}$.
\end{proof}

For simplicity of notation, we write $h_r$ for $h_r(-1)$. If $r=r_i$ then $h_i$ stands for $h_{r_i}$ and $n_i$ stands for $n_{r_i}$. Every element $H$ of $\overline{T}$ can be uniquely written in the form $\prod\limits_{i=1}^lh_{r_i}(\lambda_i)$ and we write briefly $H= (\lambda_1,\lambda_2,\ldots,\lambda_l)$.

The group $\mathcal{T}=\langle n_r~|~ r\in\Delta\rangle$ is called {\it the Tits group} \cite{AdamsHe}. Denote $\mathcal{H}=\overline{T}\cap\mathcal{T}$. It is known
that $\mathcal{H}=\langle h_r~|~r\in\Delta\rangle$ and so $\mathcal{H}$ is an elementary abelian group such that $\mathcal{T}/\mathcal{H}\simeq W$. 

\begin{remark}\label{r:p=2}
Observe that if $p=2$ then $h_r=1$ for every $r\in\Delta$, in particular, $\mathcal{H}=1$ and $\mathcal{T}\simeq W$. Moreover, the restriction of the homomorphism $\pi$ to $\mathcal{T}$ is an isomorphism between $\mathcal{T}$ and $W$. Let $T$ be a maximal torus corresponding to $w\in W$. Then $n=\pi^{-1}(w)$ is a lift of $w$ to $\overline{N}_{\sigma n}$ of the same order and $\pi^{-1}(C_W(w))$ is a complement for $\overline{T}_{\sigma n}$ in $\overline{N}_{\sigma n}$. Therefore, Theorems~\ref{th:E6}-\ref{th:E8} hold in this case by Remark~\ref{r:nonsplit}.
\end{remark}

Similarly to~\cite[Theorem 7.2.2]{CarSG}, we have:
\begin{center}
$n_s n_r n_s^{-1}=n_{w_s(r)}(\eta_{s,r}),\quad \eta_{s,r}=\pm1,$
\end{center}
\begin{center}
$n_s h_r(\lambda)n_s^{-1}=h_{w_s(r)}(\lambda).$
\end{center}

We choose values of $\eta_{r,s}$ as follows. 
Let $r\in\Phi$ and $r=\sum\limits_{i=1}^l\alpha_i r_i$. The sum of the coefficients $\sum\limits_{i=1}^l\alpha_i$ is called the {\it height} of $r$. According to \cite{Vavilov}, we fix the following total ordering of positive roots: we write $r\prec s$ if either $h(r)<h(s)$ or 
$h(r)=h(s)$ and the first nonzero coordinate of $s-r$ is positive.
The table of positive roots with respect to this ordering can be found in \cite{Vavilov}.

Recall that a pair of positive roots $(r,s)$ is called {\it special} if $r+s\in\Phi$ and $r\prec s$. 
A pair $(r,s)$ is called {\it extraspecial} if it is special and for any special pair $(r_1, s_1)$ such that $r+s=r_1+s_1$ one has $r\preccurlyeq r_1$. Let $N_{r,s}$ be the structure constants of the corresponding simple Lie algebra~\cite[Section 4.1]{CarSG}. Then the signs of $N_{r,s}$ may be taken arbitrarily at the extraspecial pairs and then all other structure constants are uniquely determined \cite[Proposition~4.2.2]{CarSG}. In our case we choose $\sgn(N_{r,s})=+$ for all extraspecial pairs $(r,s)$.
Values of the structure constants for all pairs can be found in \cite{Vavilov}.
The numbers $\eta_{r,s}$ are uniquely determined by the structure constants \cite[Proposition~6.4.3]{CarSG}.

\section{Preliminaries: calculations}

We use MAGMA~\cite{MAGMA} to calculate products of elements in $\overline{N}$. 
All calculations can be performed using online Magma Calculator~\cite{MC} as well.
At the moment it uses Magma V2.25-5. We use the following preparatory commands:\newline
$L:=LieAlgebra("E7", Rationals());$\newline
$R:=RootDatum(L);$\newline
$B:=ChevalleyBasis(L);$

By default, $L$ has adjoint type. To switch data to simply-connected groups
one can replace the first line on the following:
$$L:=LieAlgebra("E7", Rationals() : Isogeny := "SC").$$

The following command produces the list of extraspecial pairs and signs of the corresponding structure constants:\newline
$x, y, h:= ChevalleyBasis(L); IsChevalleyBasis(L, R, x, y, h);$\newline
[$\langle1, 3, 1\rangle$, $\langle1, 10, 1\rangle$, $\langle1, 15, 1\rangle$, $\langle1, 17, 1\rangle$, $\langle1, 22, 1\rangle$, $\langle1, 24, 1\rangle$,
$\langle1, 28, 1\rangle$, $\langle1, 29, 1\rangle$, $\langle1, 31, 1\rangle$, $\langle1, 35, 1\rangle$, $\langle1, 36, 1\rangle$, $\langle1, 40, 1\rangle$, 
$\langle1, 41, 1\rangle$, $\langle1, 45, 1\rangle$, $\langle1, 49, 1\rangle$, $\langle1, 62, 1\rangle$, $\langle2, 4, 1\rangle$, $\langle2, 10, 1\rangle$, $\langle2, 11, 1\rangle$, 
$\langle2, 17, 1\rangle$, $\langle2, 18, 1\rangle$, $\langle2, 24, 1\rangle$, $\langle2, 25, 1\rangle$, $\langle2, 31, 1\rangle$, $\langle2, 50, 1\rangle$, $\langle2, 54, 1\rangle$,
$\langle2, 57, 1\rangle$, $\langle2, 59, 1\rangle$, $\langle3, 4, 1\rangle$, $\langle3, 11, 1\rangle$, $\langle3, 18, 1\rangle$, $\langle3, 25, 1\rangle$, 
$\langle3, 32, 1\rangle$, $\langle3, 38, 1\rangle$, $\langle3, 43, 1\rangle$, $\langle3, 44, 1\rangle$, $\langle3, 48, 1\rangle$, $\langle3, 52, 1\rangle$, $\langle3, 61, 1\rangle$, 
$\langle4, 5, 1\rangle$, $\langle4, 12, 1\rangle$, $\langle4, 19, 1\rangle$, $\langle4, 22, 1\rangle$, $\langle4, 29, 1\rangle$, $\langle4, 36, 1\rangle$, $\langle4, 46, 1\rangle$,
$\langle4, 51, 1\rangle$, $\langle4, 55, 1\rangle$, $\langle4, 60, 1\rangle$, $\langle5, 6, 1\rangle$, $\langle5, 13, 1\rangle$, $\langle5, 35, 1\rangle$, 
$\langle5, 41, 1\rangle$, $\langle5, 57, 1\rangle$, $\langle6, 7, 1\rangle$, $\langle6, 45, 1\rangle]$.

Here, for example, the triple $\langle2,4,1\rangle$ means that the pair $(r_2,r_4)$ is extraspecial and $N_{r_2,r_4}=1$.
It is straightforward to verify the defined above ordering gives the same set of extraspecial pairs.
Thus, calculations in MAGMA for $\overline{N}$ correspond to the ordering and structure constants defined in the previous section. The following commands construct elements $n_i$ and $h_i$. \newline
$G:=GroupOfLieType(L);$ \newline
$n:=[ elt\langle G~|~i\rangle : i\ in\ [1..NumberOfPositiveRoots(R)]];$ \newline
$h:=[TorusTerm(G, i,-1) : i\ in\ [1..NumberOfPositiveRoots(R)]];$

To obtain the list of matrices of reflections one can use the following command:\newline
$w:=[Transpose(i)\ :\ i\ in\ ReflectionMatrices(R)];$

\begin{lemma}{\em \cite[Lemma 1]{GS}}\label{normalizer}
Let $g\in\overline{G}$ and $n=g^\sigma g^{-1}\in\overline{N}$. Suppose that $H\in \overline{T}$ and $u\in\mathcal{T}$. Then

(i) $Hu\in\overline{N}_{\sigma n}$ if and only if   $H=H^{\sigma n}[n,u];$

(ii) If $H\in \mathcal{H}$ then $Hu\in\overline{N}_{\sigma n}$ if and only if   $[n,Hu]=1$.
\end{lemma}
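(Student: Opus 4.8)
The plan is to prove both statements by a direct computation that unwinds the notation $\overline{N}_{\sigma n}$: recall that $x^{\sigma n}$ means ``apply $\sigma$, then conjugate by $n$'', i.e. $x^{\sigma n}=nx^\sigma n^{-1}$, so that $\overline{N}_{\sigma n}=\{m\in\overline{N}\mid m^{\sigma n}=m\}$. Two invariance facts supplied by the choice of pinning do all the work: first, $u^\sigma=u$ for every $u\in\mathcal{T}$, since $n_r^\sigma=n_r$ for all $r\in\Delta$; second, $H^\sigma=H$ for every $H\in\mathcal{H}=\overline{T}\cap\mathcal{T}$, since $\mathcal{H}=\langle h_r(-1)\mid r\in\Delta\rangle$ and $h_r(-1)^\sigma=h_r\bigl((-1)^\sigma\bigr)=h_r(-1)$.

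For part (i): with $H\in\overline{T}$ and $u\in\mathcal{T}$ I would compute, using $u^\sigma=u$, that $(Hu)^{\sigma n}=n(Hu)^\sigma n^{-1}=nH^\sigma u\,n^{-1}=(nH^\sigma n^{-1})(nun^{-1})=H^{\sigma n}u^n$, and note that $H^{\sigma n}=nH^\sigma n^{-1}$ lies in $\overline{T}$ because $\overline{T}$ is $\sigma$-stable and $n\in\overline{N}$ normalizes $\overline{T}$. Thus $Hu\in\overline{N}_{\sigma n}$ is equivalent to $H^{\sigma n}u^n=Hu$; multiplying on the right by $u^{-1}$ and using $[n,u]=u^nu^{-1}$ turns this into $H^{\sigma n}[n,u]=H$, which is the claim. (In particular the equation forces $[n,u]\in\overline{T}$, but this comes out of the equivalence itself and need not be imposed in advance.)

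For part (ii): if in addition $H\in\mathcal{H}$, then $H^\sigma=H$, hence $(Hu)^\sigma=H^\sigma u^\sigma=Hu$, and therefore $(Hu)^{\sigma n}=n(Hu)^\sigma n^{-1}=n(Hu)n^{-1}=(Hu)^n$. Consequently $Hu\in\overline{N}_{\sigma n}$ if and only if $(Hu)^n=Hu$, that is, if and only if $n$ centralizes $Hu$, i.e. $[n,Hu]=1$. (Alternatively one can start from part (i): since $H^\sigma=H$ gives $H^{\sigma n}=H^n$, the condition $H=H^{\sigma n}[n,u]$ becomes $H=H^nu^nu^{-1}=(Hu)^nu^{-1}$, which rearranges to $(Hu)^n=Hu$.)

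There is no real obstacle; the lemma is a bookkeeping statement. The only points that require care are the conventions ($x^y=yxy^{-1}$, $[x,y]=y^xy^{-1}$, and the meaning of the twisted exponent $\sigma n$) together with the order of the factors when splitting $nH^\sigma un^{-1}$, and the two $\sigma$-invariance facts above, which is exactly where the rigidity of the pinning ($n_r^\sigma=n_r$, $h_r(\lambda)^\sigma=h_r(\lambda^\sigma)$) enters. The value of the lemma is organizational: part (i) converts ``$Hu$ normalizes the twisted torus'' into an equation living entirely inside $\overline{T}$, and part (ii) further reduces it, in the situation relevant to the Tits group ($H\in\mathcal{H}$), to a commutation relation among elements of the finite group $\mathcal{T}$ — precisely the form that can be tested by the MAGMA computations described above.
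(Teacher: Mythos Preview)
Your proof is correct. Note, however, that the present paper does not actually prove this lemma: it is quoted from \cite[Lemma~1]{GS} and stated without proof. Your direct computation is the natural (and essentially the only) way to establish such a bookkeeping statement; the argument in \cite{GS} proceeds in the same manner, unwinding the condition $(Hu)^{\sigma n}=Hu$ using $u^\sigma=u$ and, for part~(ii), $H^\sigma=H$ when $H\in\mathcal{H}$.
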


The following result is our main tool for calculations of powers and commutators of elements in $\overline{N}$.

\begin{lemma}\label{conjugation} Let $\Phi$ be a simply laced root system, that is all roots in $\Phi$ have the same length. 
Let $l$ be the number of fundamental roots in $\Phi$ and $k=|\Phi|$. Assume that $n\in\mathcal{T}$ and $\pi(n)=w$. Consider the matrix $A=(a_{ij})_{l\times l}$ of $w$ in the basis $r_1,r_2,\ldots,r_l$ and an element $H=(\lambda_1,\lambda_2,\ldots,\lambda_l)$ of $\overline{T}$. Then the following statements hold.
 
(i) $H^n=(\lambda_1',\lambda_2',\ldots,\lambda_l')$,
where $\lambda_i'=\lambda_1^{a_{i1}}\cdot\lambda_2^{a_{i2}}\ldots\cdot\lambda_l^{a_{il}}$ with $1\leqslant{i}\leqslant l$;

(ii) $(Hn)^m=(\lambda_1',\lambda_2',\ldots,\lambda_l')n^m$,
where $m$ is a positive integer,  $\lambda_i'=\lambda_1^{b_{i1}}\cdot\lambda_2^{b_{i2}}\ldots\cdot\lambda_l^{b_{il}}$ with $1\leqslant{i}\leqslant l$, and $b_{ij}$ are elements of the matrix $\sum\limits_{t=0}^{m-1}A^t$.
\end{lemma}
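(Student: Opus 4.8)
The plan is to prove both claims by reducing to the behaviour of a single generator $n_{r}$ and then iterating. For claim (i), recall from the Chevalley relations quoted above that $n_s h_r(\lambda) n_s^{-1} = h_{w_s(r)}(\lambda)$; iterating over the factors of $n = n_{r_{i_1}}\cdots n_{r_{i_t}}$ gives $H^n = n H n^{-1} = \prod_{i=1}^l h_{w(r_i)}(\lambda_i)$, where $w = w_{r_{i_1}}\cdots w_{r_{i_t}}$ is the corresponding Weyl group element. So the whole content of (i) is to re-express $\prod_i h_{w(r_i)}(\lambda_i)$ in the coordinates $(\lambda_1',\dots,\lambda_l')$ relative to the fixed basis $h_{r_1},\dots,h_{r_l}$. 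For this I would use the standard identities $h_r(\lambda)h_r(\mu) = h_r(\lambda\mu)$ and the $\mathbb{Z}$-linearity of $r \mapsto h_r$ on the (co)root lattice: writing $w(r_j) = \sum_i a_{ij} r_i$, where $A=(a_{ij})$ is precisely the matrix of $w$ in the basis $r_1,\dots,r_l$, one gets $h_{w(r_j)}(\lambda_j) = \prod_i h_{r_i}(\lambda_j^{a_{ij}})$. Collecting the contributions of each $h_{r_i}$ across all $j$ yields $\lambda_i' = \prod_j \lambda_j^{a_{ij}} = \lambda_1^{a_{i1}}\cdots\lambda_l^{a_{il}}$, which is exactly the asserted formula. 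One subtlety to address is that for the adjoint group the coordinates $(\lambda_1,\dots,\lambda_l)$ are not unique; but the relation $h_r(\lambda)h_r(\mu)=h_r(\lambda\mu)$ together with linearity on the root lattice still gives a well-defined right-hand side, and in the simply-connected case the coordinates are genuinely unique, which is the case we ultimately need.

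For claim (ii) the plan is a straightforward induction on $m$ using claim (i). Write $x = Hn$ with $H = (\lambda_1,\dots,\lambda_l)$. For $m=1$ there is nothing to prove (the empty sum $\sum_{t=0}^{0}A^t = I$ gives back $H$). Assume $x^m = H_m n^m$ with $H_m$ having coordinate exponents given by the matrix $B_m = \sum_{t=0}^{m-1} A^t$. Then
\begin{align*}
x^{m+1} = Hn\cdot H_m n^m = H\cdot(n H_m n^{-1})\cdot n^{m+1} = H\cdot H_m^{\,n}\cdot n^{m+1}.
\end{align*}
By claim (i), $H_m^{\,n}$ has coordinate exponent matrix $A B_m$, and since everything lies in the abelian group $\overline{T}$ the product $H \cdot H_m^{\,n}$ has exponent matrix $I + A B_m = I + A\sum_{t=0}^{m-1}A^t = \sum_{t=0}^{m}A^t = B_{m+1}$, completing the induction. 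Here I am using that multiplying two elements of $\overline{T}$ written in the $h_{r_i}(\cdot)$ coordinates multiplies the corresponding scalars and hence adds exponents, which is again the identity $h_r(\lambda)h_r(\mu)=h_r(\lambda\mu)$.

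The main obstacle is entirely in claim (i), and specifically in being careful about two points: first, that $n_{r_{i_j}}$ for $i_j$ ranging over all of $\{1,\dots,k\}$ (not just simple indices) still acts on $\overline{T}$ through the reflection $w_{r_{i_j}}$ — this is fine because every $n_r$ for $r\in\Phi$ is defined and satisfies $n_r h_s(\lambda) n_r^{-1} = h_{w_r(s)}(\lambda)$, so the conjugation action of $n$ on $\overline{T}$ factors through $w\in W$ and depends only on $w$, not on the chosen word; and second, the matter of the matrix of $w$ being taken with respect to $r_1,\dots,r_l$ acting on the root lattice versus the coroot lattice. Since $E_l$ is simply-laced the two lattices are identified and $A$ is simultaneously the matrix on roots and on coroots, so no sign or scaling corrections enter; I would include one sentence making this identification explicit so that the later MAGMA computations (which return \texttt{ReflectionMatrices} acting on the root datum) match the formula as stated. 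Once these points are settled, the rest is the bookkeeping of exponents described above.
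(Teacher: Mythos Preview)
Your argument is correct and is the standard one: reduce (i) to the Chevalley relation $n_s h_r(\lambda) n_s^{-1}=h_{w_s(r)}(\lambda)$ together with the additivity $h_{r+s}(\lambda)=h_r(\lambda)h_s(\lambda)$ on the (co)root lattice, and then get (ii) by a one-line induction on $m$; your remark that in the simply-laced types $E_l$ the root and coroot lattices are identified so that $A$ is simultaneously the matrix on both is exactly the point that makes the formula hold as stated. The paper does not give an independent proof here: it simply refers to \cite[Lemma~2]{GS}, where the same assertion was proved for $\Phi=E_6$, and says the general case is similar. Your write-up is therefore more detailed than what the paper provides, but it is almost certainly the same argument as in \cite{GS}. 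One cosmetic slip: for $m=1$ the sum $\sum_{t=0}^{0}A^t=A^0=I$ is not ``empty'' but a single term; the conclusion is of course unaffected.
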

\begin{proof} The proof is similar to that of~\cite[Lemma 2]{GS}, where this assertion was proved for $\Phi=E_6$. \end{proof}

Since we often use Lemma~\ref{conjugation}, we illustrate its applying with the following example.

\noindent{\bf Example.} Let $\Phi=E_7$, $w=w_1w_2w_3$, and $n=n_1n_2n_3$. Then it is easy to see that
$w_1(r_1)=-r_1$, $w_1(r_3)=r_1+r_2$, $w_2(r_2)=-r_2$, $w_2(r_4)=r_2+r_4$,
$w_3(r_3)=-r_3$, $w_3(r_1)=r_1+r_3$, and $w_3(r_4)=r_3+r_4$.

Therefore, in this case the matrix $A$ for $w$ is the following.
$$A=\left(\begin{array}{ccccccc}
0 &  0 & -1 & 1 & 0 & 0 & 0 \\
0 & -1 & 0  & 1 & 0 & 0 & 0 \\
1 & 0 & -1 & 1 & 0 & 0 & 0 \\
0 & 0 & 0 & 1 & 0 & 0 & 0 \\
0 & 0 & 0 & 0 & 1 & 0 & 0 \\
0 & 0 & 0 & 0 & 0 & 1 & 0 \\
0 & 0 & 0 & 0 & 0 & 0 & 1
\end{array} \right) .$$
Let $H=(\lambda_1, \lambda_2, \lambda_3, \lambda_4,\lambda_5,\lambda_6, \lambda_7)\in\overline{T}$. Then by Lemma~\ref{conjugation}, we can use the rows of $A$ to compute $H^n$, namely $H^{n}=(\lambda_3^{-1}\lambda_4,\lambda_2^{-1}\lambda_4,\lambda_1\lambda_3^{-1}\lambda_4,\lambda_4,\lambda_5,\lambda_6,\lambda_7)$.
Now let $B=A^0+A+A^2+A^3+A^4+A^5$. Then
$$B=\left(\begin{array}{ccccccc} 
0 & 0 & 0 & 2 & 0 & 0 & 0 \\
0 & 0 & 0 & 3 & 0 & 0 & 0 \\
0 & 0 & 0 & 4 & 0 & 0 & 0 \\
0 & 0 & 0 & 6 & 0 & 0 & 0 \\
0 & 0 & 0 & 0 & 6 & 0 & 0 \\
0 & 0 & 0 & 0 & 0 & 6 & 0 \\
0 & 0 & 0 & 0 & 0 & 0 & 6 
\end{array} \right) .$$
It is easy to see that $n^6=h_2$, so $$(Hn)^6=(\lambda_4^2,\lambda_4^3,\lambda_4^4,\lambda_4^6,\lambda_5^6,\lambda_6^6,\lambda_7^6)n^6=(\lambda_4^2,-\lambda_4^3,\lambda_4^4,\lambda_4^6,\lambda_5^6,\lambda_6^6,\lambda_7^6).$$

The following lemma is clear.
\begin{lemma}\label{commutator}
Suppose that $T$ and $N$ are subgroups of a group $G$ such that $T$ is abelian and 
$N\leq N_G(T)$. Let $N_1=H_1u_1, N_2=H_2u_2$, where $H_1, H_2\in T$ and $u_1,u_2\in N$. Then
$$N_1N_2=N_2N_1 \Leftrightarrow H_1^{-1}H_1^{u_2}\cdot u_2u_1u_2^{-1}u_1^{-1}=H_2^{-1}H_2^{u_1}.$$ 
In particular, if $[u_1,u_2]=1$ then
$N_1N_2=N_2N_1 \Leftrightarrow H_1^{-1}H_1^{u_2}=H_2^{-1}H_2^{u_1}.$
\end{lemma}

\section{The proof for type $E_7$}

In this section we prove Theorem~\ref{th:E7}. Since the case $p=2$ was considered in Remark~\ref{r:p=2},
we suppose that $q$ is odd. We assume that $G$ is a finite group of Lie type $E_7$.
The extended Dynkin diagram of type $E_7$ is the following.

\begin{picture}(330,60)(-30,-30)
\put(50,0){\line(1,0){50}}
\put(100,0){\line(1,0){50}} \put(150,0){\line(1,0){50}}
\put(200,0){\line(1,0){50}} \put(250,0){\line(1,0){50}}
\put(50,0){\circle*{6}}
\put(100,0){\circle*{6}} \put(150,0){\circle*{6}}
\put(200,0){\circle*{6}} \put(250,0){\circle*{6}}
\put(300,0){\circle*{6}} \put(200,0){\line(0,-1){20}}
\put(200,-20){\circle*{6}} 
\put(50,10){\makebox(0,0){-$r_0$}}
\put(50,-10){\makebox(0,0){-1}}\put(100,10){\makebox(0,0){$r_1$}}
\put(100,-10){\makebox(0,0){2}}\put(150,10){\makebox(0,0){$r_3$}}
\put(150,-10){\makebox(0,0){3}}\put(200,10){\makebox(0,0){$r_4$}}
\put(205,-10){\makebox(0,0){4}}\put(250,10){\makebox(0,0){$r_5$}}
\put(250,-10){\makebox(0,0){3}}\put(300,10){\makebox(0,0){$r_6$}}
\put(300,-10){\makebox(0,0){2}}\put(211,-22){\makebox(0,0){$r_2$}}
\put(190,-20){\makebox(0,0){2}} \put(300,0){\line(1,0){50}}
\put(350,0){\circle*{6}}
\put(350,-10){\makebox(0,0){1}}\put(350,10){\makebox(0,0){$r_7$}}
\end{picture}

In this case the Weyl group $W$ is isomorphic to $2\times \Oo_7(2)$ (in the notation of \cite{Atlas}). So $W$ has the central involution $w_0$. It is easy to verify that $w_0=w_1w_2w_5w_7w_{37}w_{55}w_{61}$. There are in total 60 conjugacy classes in $W$
and all of them are divided into pairs.  Namely, for every $w\in W$ elements $w$ and $ww_0$ belong to 
different conjugacy classes. We list 30 of representatives of conjugacy classes in Table~\ref{t:main:E7} and the remaining 30 can be obtained by multiplying on $w_0$. This information can be verified with the aid of GAP \cite{GAP}.

There are exactly 60 conjugacy classes of maximal tori in $G$. The cyclic structures of maximal tori 
in groups of type $E_7$ were described in \cite{DerF}. We add this information in Table~\ref{t:main:E7}. To see the structure of a torus that corresponds to $ww_0$, where $w\in W$, one can substitute $-q$ instead of $q$ and then multiply by $-1$ in the structure of a torus that corresponds to $w$.

We divide the proof into three general cases. First, we consider the simply-connected case. Groups of adjoint type are considered in two subsections. One subsection is devoted to maximal tori that do not have complements in their algebraic normalizers and the other subsection is for the remaining maximal tori. 

\subsection{Simply-connected groups of type $E_7$}

First, we show that any maximal torus $T$ of the group $G=E_7^{sc}(q)$ does not have a complement in its algebraic normalizer $N(G,T)$. Denote $n_0=n_1n_2n_5n_7n_{37}n_{55}n_{61}$. Using MAGMA, we
verify that $n_0^2=h_2h_5h_7$, $[n_0,n_i]=1$, where $i=1,\ldots,7$, and hence $[n_0,h_i]=1$,
where $i=1,\ldots,7$. Assume that $T$ has a complement $K$ in $N(G,T)$. Since $w_0\in C_W(w)$, there exists a preimage $N_0$ of $w_0$ in $K$ such that $N_0^2=1$. Then $N_0=Hn_0$, where $H\in T$. Since $w_0$ maps $r$ to $-r$
for every $r\in\Phi$, Lemma~\ref{conjugation} implies that $H^{n_0}=H^{-1}$ for every $H\in T$. Therefore, we have $N_0^2=HH^{n_0}n_0^2=HH^{-1}h_2h_5h_7=h_2h_5h_7$, a contradiction. 

Suppose that $w$ is an element from the second column of Table~\ref{lifts_E7}. Then there exists a lift of $w$ to $N(G,T)$ of the same order. Examples of the lifts we list in the fourth column of Table~\ref{lifts_E7}.
In each case we present an element $n_w\in\mathcal{T}$ for $w\in W$ such that $\pi(n_w)=w$.
Using MAGMA, we see that $|n_w|=|w|$ and hence $n_w$ is a lift of $w$ to $\overline{N}_{\sigma{n_w}}$. 

Consider $w$ such that $|w|=4k$ for an integer $k$. 
Since the lift $N$ in Table~\ref{lifts_E7}
for $w$ lies in $\mathcal{T}$ and $n_0\in Z(\mathcal{T})$, we get $(Nn_0)^{4k}=N^{4k}n_0^{4k}=(h_2h_5h_7)^{2k}=1$. Therefore, $Nn_0$ is a lift of $ww_0$ of the same order.

Let $|ww_0|=4k+2$ for an integer $k$. Suppose that $ww_0$ has a lift $N$ to its algebraic normalizer of the same order. 
Then $N=Hnn_0=(\lambda_1,\lambda_2,\lambda_3,\lambda_4,\lambda_5,\lambda_6,\lambda_7)nn_0$ for some $\lambda_i\in\overline{\F}_p$ and $n\in\mathcal{T}$. Since $n_0$ commutes with $n$,  
we have $(nn_0)^{2i}=n^{2i}(h_2h_5h_7)^i$ and $(nn_0)^{2i+1}=n^{2i+1}n_0(h_2h_5h_7)^i$,
where $i$ is an integer. Then
\begin{multline*}
N^{4k+2}=(Hn_0n)^{4k+2}=HH^{n_0n}H^{(n_0n)^2}\ldots H^{(n_0n)^{4k+1}}(n_0n)^{4k+2}=\\
H(H^{-1})^{n}H^{n^2}(H^{-n^2})^n\ldots H^{n^{4k}}(H^{-n^{4k}})^nn^{4k+2}h_2h_5h_7.
\end{multline*}
Let $w\in\langle w_1,w_2,w_3,w_4,w_5,w_6\rangle$. This is the case for the first 25 representatives in Table~\ref{lifts_E7}. By Lemma~\ref{conjugation}, we infer that $H^{n^i}=(*,*,*,*,*,*,\lambda_7)$ for every $i>0$ and $n^{4k+2}=(*,*,*,*,*,*,1)$.
Hence, $N^{4k+2}=(*,*,*,*,*,*,-1)\neq1$; a contradiction. 

In the remaining two cases (representatives $27$ and $29$ in Table~\ref{lifts_E7}) the order of $w$ is odd, say $|w|=2k+1$. Then $(ww_0)^{2k+1}=w_0$ and $|ww_0|=4k+2$. If $ww_0$ has a preimage $N$ in $N(G,T)$ of the same order then $N^{2k+1}$ is a preimage of $w_0$ of order two, which is impossible by the argument above.

\begin{table}
\caption{Lifts of elements of $W(E_7)$ to $E_7^{sc}(q)$}\label{lifts_E7}
\centering
\begin{tabular}{|l|l|c|l|c|}
 \hline
 \No  & Representative $w$ & $|w|$ & lift of $w$  & lift of $ww_0$\\ \hline
  1  & $1$ & 1 & 1  &  -- \\ \hline
  2  & $w_1$ & 2 & $h_3n_1$  &  -- \\ \hline
  3  & $w_1w_2$ & 2 & $h_3h_4n_1n_2$   & -- \\ \hline
  4  & $w_3w_1$ & 3 & $n_3n_1$  & -- \\ \hline
  5  & $w_2w_3w_5$ & 2 & $h_4n_2n_3n_5$  & -- \\ \hline
  6  & $w_1w_3w_5$ & 6 & $h_4n_1n_3n_5$  & -- \\ \hline
  7 & $w_1w_3w_4$ & 4 & $h_2n_1n_3n_4$  & $h_2n_1n_3n_4n_0$ \\ \hline
  8 & $w_1w_4w_6w_{53}$ & 2 & $n_1n_4n_6n_{53}$  & -- \\ \hline
  9 & $w_1w_2w_3w_5$ & 6 & $h_1h_4n_1n_2n_3n_5$ & -- \\ \hline
  10 & $w_1w_5w_3w_6$ & 3 & $n_1n_5n_3n_6$ & -- \\  \hline
  11 & $w_1w_4w_6w_3$ & 4 & $h_2n_1n_4n_6n_3$  & $h_2n_1n_4n_6n_3n_0$ \\ \hline
  12 & $w_1w_4w_3w_2$ & 5 & $n_1n_4n_3n_2$  & -- \\ \hline
  13 & $w_3w_2w_5w_4$ & 6 & $n_3n_2n_5n_4$ & -- \\ \hline
  14 & $w_3w_2w_4w_{16}$ & 4 & $n_3n_2n_4n_{16}$  & $n_3n_2n_4n_{16}n_0$ \\ \hline
  15 & $w_1w_5w_3w_6w_2$ & 6 & $h_4n_1n_5n_3n_6n_2$  & -- \\ \hline
  16 & $w_1w_4w_6w_3w_{53}$ & 4 & $h_2n_1n_4n_6n_3n_{53}$  & $h_2n_1n_4n_6n_3n_{53}n_0$ \\ \hline
  17 & $w_1w_4w_5w_3w_{53}$ &  10 & $h_2n_1n_4n_5n_3n_{53}$  & -- \\ \hline
  18 & $w_1w_4w_6w_3w_5$ & 6  & $h_2n_1n_4n_6n_3n_5$  & -- \\ \hline
  19 & $w_2w_5w_3w_4w_6$ & 8  & $n_2n_5n_3n_4n_6$  & $n_2n_5n_3n_4n_6n_0$ \\ \hline
  20 & $w_{23}w_5w_4w_3w_2$ & 12 & $h_1n_{23}n_5n_4n_3n_2$  & $h_1n_{23}n_5n_4n_3n_2n_0$ \\ \hline
  21 & $w_1w_5w_2w_3w_6w_{53}$ & 3 & $n_1n_5n_2n_3n_6n_{53}$  & -- \\ \hline
  22 & $w_1w_4w_6w_3w_5w_{53}$ & 6 & $n_1n_4n_6n_3n_5n_{53}$ & -- \\ \hline
  23 & $w_1w_4w_6w_3w_2w_5$ & 12 & $n_1n_4n_6n_3n_2n_5$  & $n_1n_4n_6n_3n_2n_5n_0$ \\ \hline
  24 & $w_1w_4w_{16}w_3w_2w_6$ & 9 & $n_1n_4n_{16}n_3n_2n_6$  & -- \\ \hline
  25 & $w_1w_4w_{16}w_3w_2w_{40}$ & 6 & $n_1n_4n_{16}n_3n_2n_{40}$ & --\\ \hline
  26 & $w_1w_4w_{6}w_3w_7$ & 12 & $h_2n_1n_4n_6n_3n_7$  & $h_2n_1n_4n_6n_3n_7n_0$ \\ \hline
  27 & $w_1w_4w_{6}w_2w_3w_7$ & 15 & $n_1n_4n_6n_2n_3n_7$ & -- \\ \hline
  28 & $w_3w_2w_4w_{16}w_7$ & 4 & $n_3n_2n_4n_{16}n_7$  & $n_3n_2n_4n_{16}n_7n_0$ \\ \hline
  29 & $w_1w_4w_{6}w_3w_5w_7$ & 7 & $n_1n_4n_6n_3n_5n_7$ & -- \\ \hline
  30 & $w_{39}w_3w_5w_1w_4w_6$ & 8 & $n_{39}n_3n_5n_1n_4n_6$ & $n_{39}n_3n_5n_1n_4n_6n_0$ \\
  \hline
\end{tabular}
\end{table}

\subsection{Adjoint groups of type $E_7$: non-splitting  cases} 
Let $G=E^{ad}_7(q)$.
Throughout this subsection we suppose that $\overline{T}$ is a maximal $\sigma$-stable torus of $\overline{G}$ and $T$ is a maximal torus of $G$ corresponding to a conjugacy class of $w\in W$. 
We write $H=(\lambda_1,\lambda_2,\lambda_3,\lambda_4,\lambda_5,\lambda_6,\lambda_7)$ for an arbitrary element of $T$. This notation means that $H=\prod\limits_{i=1}^7h_{r_i}(\lambda_i)$.
Observe that $Z(E^{sc}_7(q))=\langle h_2h_5h_7\rangle$.
For simplicity, we identify $h_i$ with their images in $G$ and 
assume that $h_2h_5h_7=1$, that is $(1,-1,1,1,-1,1,-1)=(1,1,1,1,1,1,1)$. Recall that $\pi$ is the natural homomorphism from $\overline{N}$ onto $W$.
The main tool to show that $T$ does not have a complement in $N(G,T)$ is the following assertion.
\begin{lemma}\label{l:torie7}
Let $w\in W$ and $C_W(w)\geqslant\langle w_2w_5,w_{49},w_{63}\rangle \simeq\mathbb{Z}_2\times\mathbb{Z}_2\times\mathbb{Z}_2$.
Suppose that $n\in\overline{N}$ such that $\pi(n)=w$ and $n_2n_5,n_{49},n_{63}$ lie in $\overline{N}_{\sigma n}$. Then $\overline{T}_{\sigma n}$ does not have a complement in $\overline{N}_{\sigma n}$.
\end{lemma}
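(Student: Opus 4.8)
The plan is to argue by contradiction: suppose $\overline{T}_{\sigma n}$ has a complement $K$ in $\overline{N}_{\sigma n}$. Then $K\simeq C_{W,\sigma}(w)=C_W(w)$ (the action of $\sigma$ on $W$ is trivial here, since $G=E_7^{ad}(q)$ is untwisted), and in particular $K$ contains a subgroup isomorphic to $\langle w_2w_5,w_{49},w_{63}\rangle\simeq\mathbb{Z}_2^3$. So there are three commuting involutions $N_1,N_2,N_3\in K$ lifting $w_2w_5$, $w_{49}$, $w_{63}$ respectively, and each $N_i$ is of the form $H_iu_i$ with $H_i\in\overline{T}$ and $u_i$ a fixed lift in $\mathcal{T}$ (namely $u_1=n_2n_5$, $u_2=n_{49}$, $u_3=n_{63}$), since by hypothesis these particular lifts already lie in $\overline{N}_{\sigma n}$ and any two lifts of the same $w$ differ by an element of $\overline{T}$.

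First I would record, using MAGMA as in Section~3 together with Lemma~\ref{conjugation}(ii), the three relations coming from $N_i^2=1$: writing $N_i=H_iu_i$ we get $H_iH_i^{u_i}u_i^2=1$, i.e. $H_i^{-1}H_i^{u_i}=u_i^2$ (using that $u_i^2\in\mathcal{H}$ is central of order dividing $2$, so equals its own inverse). Here $u_i^2$ is one of $h_{w_2(5)}(\eta)\cdot(\ldots)$, an explicit element of $\mathcal{H}$; in the adjoint group we have already imposed $h_2h_5h_7=1$, so these become explicit constraints on the coordinates $\lambda_1,\dots,\lambda_7$ of each $H_i$. Then I would record the three commutator relations $[N_i,N_j]=1$ for $i<j$: since $[u_i,u_j]=1$ (the $w$'s commute, but one must check the chosen lifts in $\mathcal{T}$ also commute, or absorb the discrepancy $[u_i,u_j]\in\mathcal{H}$ into the equation), Lemma~\ref{commutator} gives $H_i^{-1}H_i^{u_j}=H_j^{-1}H_j^{u_i}\cdot[u_i,u_j]^{-1}$, again an explicit system in the $\lambda$'s once the reflection matrices of $w_{49}$, $w_{63}$, $w_2w_5$ are written down via Lemma~\ref{conjugation}(i).

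The heart of the argument is then linear algebra over $\overline{\F}_p$ (equivalently over $\mathbb{Z}$ acting multiplicatively on the torus): combining the six displayed relations one should be able to pin down a product of some of the $h_i$ — concretely, I expect a relation forcing some nontrivial element of $\mathcal{H}$ (which is nontrivial in $E_7^{ad}$ because we only killed $h_2h_5h_7$) to equal $1$, yielding the contradiction. The mechanism is exactly as in the simply-connected warm-up earlier in the section: there $N_0^2$ was forced to equal $h_2h_5h_7\neq1$; here the three involutions and their pairwise commutativity should over-determine the $H_i$ and collapse to an impossible identity in $\mathcal{H}$. The main obstacle — and the only genuinely delicate point — is bookkeeping: getting the signs $\eta_{s,r}$ and the values $u_i^2$, $[u_i,u_j]$ right for the specific roots $r_{49}$, $r_{63}$ (and the compound reflection $w_2w_5$) with the ordering and structure constants fixed in Section~2, and then checking that the resulting $6$-equation system in $21$ unknowns $\lambda_{ij}$ really has no solution with all $\lambda_{ij}\in\overline{\F}_p^{*}$. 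I would carry this out by the MAGMA routine already described, reducing each $N_i^2$ and $[N_i,N_j]$ to a tuple identity and reading off the contradictory coordinate.
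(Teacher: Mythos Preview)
Your plan is exactly the paper's approach: assume a complement, lift $w_2w_5,w_{49},w_{63}$ to $N_i=H_iu_i$, and use Lemmas~\ref{conjugation} and~\ref{commutator} to turn $N_i^2=1$ and $[N_i,N_j]=1$ into coordinate equations that force a contradiction in~$\mathcal{H}$. The paper is slightly more economical --- it only uses $N_3^2=1$ together with $[N_3,N_1]=1$ and $[N_3,N_2]=1$ (the lifts $n_{63},n_2n_5,n_{49}$ do commute in $\mathcal{T}$), and the contradiction lands on the $H_2$-coordinates ($\alpha_1^2=1$ versus $\alpha_1^2=-1$) rather than on an element of $\mathcal{H}$ --- but your six-relation setup would recover this.
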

\begin{proof}
Assume that $\overline{T}_{\sigma n}$ has a complement $K$ in $\overline{N}_{\sigma n}$.
Let $N_1,N_2,N_3$ be preimages of $w_2w_5$, $w_{49}$, and $w_{63}$ in $K$, respectively. Then $N_1=H_1n_2n_5$, $N_2=H_2n_{49}$, $N_3=H_3n_{63}$, where
$$H_1=(\mu_1,\ldots,\mu_7),
H_2=(\alpha_1,\ldots,\alpha_7),
H_3=(\beta_1,\ldots,\beta_7)$$
are elements of $\overline{T}_{\sigma n}$.

Since $w_{63}^2=1$, it is true that $N_3^2=1$. Computations in MAGMA show
that $n_{63}^2=h_3h_5h_7$. By Lemma~\ref{conjugation}, 
$$(Hn_{63})^2=(1,\lambda_1^{-2}\lambda_2^2,-\lambda_1^{-3}\lambda_3^2,\lambda_1^{-4}\lambda_4^2,
-\lambda_1^{-3}\lambda_5^2,\lambda_1^{-2}\lambda_6^2,-\lambda_1^{-1}\lambda_7^2).$$
Therefore, $\beta_1^2=\varepsilon\beta_2^2$ and $\beta_5^2=-\varepsilon\beta_1^3$, 
where $\varepsilon\in\{1,-1\}$.

Since $[w_{63}, w_2w_5]=[w_{63}, w_{49}]=1$, we infer that $[N_3,N_1]=[N_3,N_2]=1$.
Using MAGMA, we see that $[n_{63},n_2n_5]=[n_{63},n_{49}]=1$. It follows from Lemma~\ref{commutator} that $H_3^{-1}H_3^{n_2n_5}=H_1^{-1}H_1^{n_{63}}$ and 
$H_3^{-1}H_3^{n_{49}}=H_2^{-1}H_2^{n_{63}}$.
By Lemma~\ref{conjugation},
$$H^{-1}H^{n_{63}}=(\lambda_1^{-2},\lambda_1^{-2},\lambda_1^{-3},\lambda_1^{-4}, 
\lambda_1^{-3},\lambda_1^{-2},\lambda_1^{-1}),$$
$$H^{-1}H^{n_2n_5}=(1,\lambda_2^{-2}\lambda_4,1,1,\lambda_4\lambda_5^{-2}\lambda_6,1,1),$$
$$H^{-1}H^{n_{49}}=(1,\lambda_1\lambda_6^{-1},\lambda_1\lambda_6^{-1},\lambda_1^2\lambda_6^{-2},
\lambda_1^2\lambda_6^{-2},\lambda_1^2\lambda_6^{-2},\lambda_1\lambda_6^{-1}).$$
So we have $(1,\beta_2^{-2}\beta_4,1,1,\beta_4\beta_5^{-2}\beta_6,1,1)=(\mu_1^{-2},\mu_1^{-2},\mu_1^{-3},\mu_1^{-4},\mu_1^{-3},\mu_1^{-2},\mu_1^{-1})$. 
Whence $\mu_1^{-3}=\mu_1^{-4}$ and so $\mu_1=1$. Then $\beta_2^{-2}\beta_4=\beta_4\beta_5^{-2}\beta_6=1$. Therefore, $\beta_5^2=\beta_6\beta_2^2$.
Since $\beta_1^2=\varepsilon\beta_2^2$ and $\beta_5^2=-\varepsilon\beta_1^3$, we infer that $\beta_6=-\beta_1$.
The equality $H_3^{-1}H_3^{n_{49}}=H_2^{-1}H_2^{n_{63}}$ implies that $$(1,\beta_1\beta_6^{-1},\beta_1\beta_6^{-1},\beta_1^2\beta_6^{-2},
\beta_1^2\beta_6^{-2},\beta_1^2\beta_6^{-2},\beta_1\beta_6^{-1})=(\alpha_1^{-2},\alpha_1^{-2},\alpha_1^{-3},\alpha_1^{-4},\alpha_1^{-3},\alpha_1^{-2},\alpha_1^{-1}).$$ 
Applying $\beta_1\beta_6^{-1}=-1$ to the latter equality, we find that
$$(\alpha_1^{-2},\alpha_1^{-2},\alpha_1^{-3},\alpha_1^{-4},\alpha_1^{-3},\alpha_1^{-2},\alpha_1^{-1})=(1,-1,-1,1,1,1,-1).$$
From the equalities of the first coordinates, we see that $\alpha_1^2=1$. Since on the right side the fifth and the seventh coordinates have different signs, we infer that $\alpha_1^{-3}=-\alpha_1^{-1}$; a contradiction with $\alpha_1^2=1$.
\end{proof}

The following lemma finishes the proof of the theorem for cases when $T$ does not have complements.

\begin{lemma} Let $w$ or $w_0w$ is one of the following:
1, $w_1$, $w_1w_2$, $w_2w_3w_5$, $w_1w_3w_4$, $w_1w_4w_6w_{53}$, $w_1w_4w_6w_3$, 
$w_3w_2w_4w_{16}$, $w_1w_4w_6w_3w_{53}$,  $w_3w_2w_4w_{16}w_{7}$. Suppose that $T$
is a maximal torus that corresponds to the conjugacy class of $w$ in $W$.
Then $N(G,T)$ does not split over $T$ and there exists a lift of $w$ to $N(G,T)$ of order $|w|$.
\end{lemma}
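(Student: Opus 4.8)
Both assertions are local to the pair $\{w,ww_0\}$: by Lemma~\ref{l:ww0} the splitting of $N(G,T)$ over $T$ depends only on this pair, so for the non-splitting claim I may assume that $w$ is conjugate to one of the ten listed representatives, which are exactly the classes with $|w|$ dividing $4$ (orders $1$, $2$, $4$). The plan for non-splitting is then to check, for each of these ten classes, the hypotheses of Lemma~\ref{l:torie7}. For a fixed such $w$ I would first verify, with GAP and MAGMA, that after replacing $w$ by a suitable conjugate one has $\langle w_2w_5,w_{49},w_{63}\rangle\simeq\mathbb{Z}_2\times\mathbb{Z}_2\times\mathbb{Z}_2\leqslant C_W(w)$; then I would produce an explicit lift $n\in\overline{N}$ with $\pi(n)=w$ — the element recorded in Table~\ref{lifts_E7}, modified by an element of $\overline{T}$ if necessary — for which $[n,n_2n_5]=[n,n_{49}]=[n,n_{63}]=1$, these commutators being computed in MAGMA via Lemmas~\ref{conjugation} and~\ref{commutator}; by Lemma~\ref{normalizer}(ii) this gives $n_2n_5,n_{49},n_{63}\in\overline{N}_{\sigma n}$. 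Lemma~\ref{l:torie7} then yields that $\overline{T}_{\sigma n}$ has no complement in $\overline{N}_{\sigma n}$, and since all pairs $(\overline{N}_{\sigma n_1},\overline{T}_{\sigma n_1})$ with $\pi(n_1)=w$ are $\overline{G}$-conjugate and one of them is conjugate to $(N(G,T),T)$ (Remark~\ref{r:nonsplit}), it follows that $T$ has no complement in $N(G,T)$; the case of $ww_0$ follows by Lemma~\ref{l:ww0}.

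For a lift of order $|w|$, I distinguish whether it is $w$ or $ww_0$ that is conjugate to one of the ten. If $w$ is, then the element $n_w\in\mathcal{T}$ of Table~\ref{lifts_E7} satisfies $\pi(n_w)=w$ and has order $|w|$ (verified in MAGMA; equivalently, in the adjoint group the order of $n_w$ divides the order $|w|$ of its simply-connected counterpart and is a multiple of $|\pi(n_w)|=|w|$), so Remark~\ref{r:nonsplit} provides a lift of order $|w|$ in $N(G,T)$. If instead $ww_0$ is conjugate to one of the ten, write $w$ as a conjugate of $w'w_0$ with $w'$ one of the ten, and set $N=n_{w'}n_0$ with $n_0=n_1n_2n_5n_7n_{37}n_{55}n_{61}$. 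Then $\pi(N)=w'w_0$ has order $|w|\in\{2,4\}$, and since $n_0\in Z(\mathcal{T})$, $n_0^2=h_2h_5h_7=1$ in the adjoint group, and $|w'|$ divides the even number $|w|$ in each of the cases $|w'|\in\{1,2,4\}$, one gets $N^{|w|}=n_{w'}^{|w|}\,n_0^{|w|}=1$. Hence $N$ is a lift of $w'w_0$, and so of $w$, of order exactly $|w|$, and Remark~\ref{r:nonsplit} again transfers it to $N(G,T)$.

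The main obstacle lies in the non-splitting part: for each of the ten classes one has to exhibit a single lift $n$ that simultaneously centralizes $n_2n_5$, $n_{49}$ and $n_{63}$ in $\overline{N}$, whereas a priori these commutators only lie in $\mathcal{H}$ and may be nontrivial. Handling this requires the freedom to modify $n$ by an element of $\overline{T}$, the fixed structure constants, and the matrix computations of Lemma~\ref{conjugation}, together with the preliminary group-theoretic check that a conjugate of $\langle w_2w_5,w_{49},w_{63}\rangle$ really does sit inside $C_W(w)$ for each of the ten representatives.
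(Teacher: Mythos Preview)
Your proposal is correct and follows essentially the same strategy as the paper: reduce to one of the ten representatives via Lemma~\ref{l:ww0}, pass to a conjugate $w'$ whose centralizer contains $\langle w_2w_5,w_{49},w_{63}\rangle$, exhibit a lift $n$ in $\mathcal{T}$ commuting with $n_2n_5,n_{49},n_{63}$ so that Lemma~\ref{l:torie7} applies, and handle the lift of order $|w|$ for $w$ and for $ww_0$ via the Tits-group elements together with $n_0$ (using $n_0^2=h_2h_5h_7=1$ in the adjoint group). The paper carries this out by tabulating, for each of the ten classes, both the conjugate $w'$ and an explicit element $n_{w'}\in\mathcal{T}$ with the required commutator properties (Table~\ref{t:nonsplit_E7}); your reference to Table~\ref{lifts_E7} for this purpose is a slight slip, since those lifts are for the original representatives rather than the conjugates $w'$, but you correctly flag in your last paragraph that producing such $n$ is precisely the computational content that has to be supplied case by case.
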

\begin{proof} By Lemma~\ref{l:ww0}, we can assume that $w$ is an element from the list in the hypothesis.
Using GAP, we find that in each case there exists $w'\in W$ which is conjugate to $w$ 
and such that $w_2w_5$, $w_{49}$, $w_{63}\in C_W(w')$. Examples of such $w'$ for every $w$
are listed in Table~\ref{t:nonsplit_E7}. The first column of this table contains the number of a torus according to Table~\ref{t:main:E7},
and the third column contains an example of $w'$ for $w$. The fourth column contains elements $n_{w'}\in\mathcal{T}$ 
such that $[n_{w'},n_2n_5]=[n_{w'},n_{49}]=[n_{w'},n_{63}]=1$ and $\pi(n_{w'})=w'$.
Then Lemma~\ref{normalizer} yields $n_2n_5$, $n_{49}$, and $n_{63}$ belong to $\overline{N}_{\sigma n_{w'}}$ in each case.
Now Lemma~\ref{l:torie7} implies that $\overline{N}_{\sigma n_{w'}}$ does not split over $\overline{T}_{\sigma n_{w'}}$. So $N(G,T)$ does not split over $T$ by Remark~\ref{r:nonsplit}.
Finally, the fifth 
column contains another preimage $n'$ of $w'$ in $\mathcal{T}$ such that $|w'|=|n'|$. The latter equality can be verified in MAGMA. Thus, $n'$ is a lift of $w'$ to $\overline{N}_{\sigma n'}$ of the same order.
Then $n'n_0$ is a lift of $w'w_0$ and the lemma is proved.
\end{proof}

\begin{longtable}{|c|l|l|l|l|}
\caption{Representatives of conjugacy classes satisfying the condition of Lemma~\ref{l:torie7}}\label{t:nonsplit_E7} \\
\hline
 \No & $w$ & $w'$ & preimage of $w'$ & lift of $w'$ \\ 
\hline\endfirsthead
\multicolumn{5}{c}{\textit{Table~\ref{t:nonsplit_E7} (continued)}} \\
\hline 
 \No & $w$ & $w'$ & preimage of $w'$ & lift of $w'$ \\ 
\hline\endhead
\hline
\endfoot
\hline\endlastfoot 
  1  & $1$ &  $1$ &  1  & 1 \\ \hline
  2  & $w_1$ &  $w_7$ & $n_7$ & $h_6n_7$ \\ \hline
  3  & $w_1w_2$ &  $w_2w_5$ & $n_2n_5$ & $h_4n_2n_5$  \\ \hline
  5  & $w_2w_3w_5$ &  $w_3w_5w_2$ & $n_3n_5n_2$  & $h_4n_3n_5n_2$ \\ \hline
  7 & $w_1w_3w_4$ &  $w_2w_4w_{16}$ & $h_4n_2n_4n_{16}$ & $h_6n_2n_4n_{16}$ \\ \hline
  8 & $w_1w_4w_6w_{53}$ &  $w_2w_5w_{49}w_{63}$ & $n_2n_5n_{49}n_{63}$ & $n_2n_5n_{49}n_{63}$ \\ \hline
  11 & $w_1w_4w_6w_3$ &  $w_2w_{63}w_4w_{16}$ & $h_4n_2n_{63}n_4n_{16}$ & $h_6n_2n_{63}n_4n_{16}$ \\ \hline
  14 & $w_3w_2w_4w_{16}$ &  $w_3w_2w_4w_{16}$ & $h_4n_3n_2n_4n_{16}$ & $n_3n_2n_4n_{16}$ \\  \hline
  16 & $w_1w_4w_6w_3w_{53}$ &  $w_2w_8w_{32}w_{57}w_{60}$ & $h_1n_2n_8n_{32}n_{57}n_{60}$ & $h_6n_2n_8n_{32}n_{57}n_{60}$ \\ \hline
  28 & $w_3w_2w_4w_{16}w_{7}$ & $w_3w_2w_4w_{16}w_7$ & $h_4n_3n_2n_4n_{16}n_7$ & $n_3n_2n_4n_{16}n_7$ \\ 
\end{longtable}

\subsection{Adjoint groups of type $E_7$: splitting cases}
In this subsection, we show that maximal tori of $G$ that are not considered in the previous subsection
have complements in their algebraic normalizers.
Recall that $w_0=w_1w_2w_5w_7w_{37}w_{55}w_{61}$ is the central involution of $W$ and $n_0=n_1n_2n_5n_7n_{37}n_{55}n_{61}$. As above, we identify $h_i$ with their images in $G$,
in particular $h_2h_5h_7=1$.

We divide the proof into two parts: first, we consider maximal tori that need a special treatment and 
then we finish with remaining maximal tori using a common approach.

\begin{lemma} Suppose that $w$ or $ww_0$ is conjugate to one
of the following: $w_3w_1$, $w_1w_3w_5$, $w_1w_2w_3w_5$, $w_2w_5w_3w_4w_6$, $w_{23}w_5w_4w_3w_2$,
$w_1w_4w_6w_3w_7$, or $w_{39}w_3w_5w_1w_4w_6$. If $T$ is a maximal torus corresponding to the conjugacy class of $w$ 
then $N(G,T)$ splits over $T$.
\end{lemma}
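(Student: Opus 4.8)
The plan is to treat the seven classes one at a time, exhibiting in each case an explicit complement. By Lemma~\ref{l:ww0} it suffices to consider the representatives $w$ listed in the hypothesis, the classes of $ww_0$ following automatically; and by Remark~\ref{r:nonsplit} it is then enough, for one convenient lift $n\in\overline{N}$ with $\pi(n)=w$, to produce a subgroup $K\leqslant\overline{N}_{\sigma n}$ with $\pi(K)=C_W(w)$ and $K\cap\overline{T}_{\sigma n}=1$; since $\overline{N}_{\sigma n}/\overline{T}_{\sigma n}\cong C_{W,\sigma}(w)=C_W(w)$, such a $K$ is automatically a complement. What separates these seven elements from the ones handled by the uniform argument in the remainder of the subsection is that here $C_W(w)$ is \emph{not} cyclic: knowing from Table~\ref{lifts_E7} that $w$ has a lift of order $|w|$ in $\overline{N}_{\sigma n}$ no longer suffices, and one must lift an entire generating set of $C_W(w)$ compatibly.

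For each $w$ I would proceed as follows. First, using GAP and MAGMA, compute $C_W(w)$ together with a small generating set $x_1,\dots,x_k$ and a presentation on these generators; since $w_0$ is central in $W$, one may always take $x_1=w_0$. Next, fix the lift $n=n_w\in\mathcal{T}$ from Table~\ref{lifts_E7}, so that $|n|=|w|$; as lift of $w_0$ take $n_0=n_1n_2n_5n_7n_{37}n_{55}n_{61}$, which satisfies $n_0^2=h_2h_5h_7=1$ in the adjoint group and lies in $Z(\mathcal{T})$. For the remaining generators pick $u_j\in\mathcal{T}$ with $\pi(u_j)=x_j$; since $x_j$ commutes with $w$ we have $[n,u_j]\in\overline{T}\cap\mathcal{T}=\mathcal{H}$, so by Lemma~\ref{normalizer}(i) it is enough to find $H_j\in\overline{T}$ with $H_j=H_j^{\sigma n}[n,u_j]$. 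Writing $H_j=(\lambda_1,\dots,\lambda_7)$ and using Lemma~\ref{conjugation} to evaluate $H_j^{\sigma n}$, this is a system of monomial equations over $\overline{\F}_p^{*}$, always solvable over the algebraically closed field $\overline{\F}_p$ — and solvable with $H_j\in\mathcal{H}$ whenever $[n,u_j]$ lies in the image of the endomorphism $H\mapsto H^{-1}H^{n}$ of $\mathcal{H}$. Put $N_j=H_ju_j\in\overline{N}_{\sigma n}$.

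The decisive step is to check that the $H_j$ can be chosen so that $N_1,\dots,N_k$ satisfy the defining relations of $C_W(w)$. Any product of the $N_j$ representing the trivial word in $W$ lies in $\overline{T}_{\sigma n}$, and one verifies, using Lemmas~\ref{conjugation} and~\ref{commutator} in MAGMA, that with the correct choice of the $H_j$ every such product is in fact $1$. Granting this, $\pi$ restricts to an isomorphism $K=\langle N_1,\dots,N_k\rangle\to C_W(w)$, hence $K$ is a complement for $\overline{T}_{\sigma n}$ in $\overline{N}_{\sigma n}$ and $N(G,T)$ splits over $T$ by Remark~\ref{r:nonsplit}. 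It is essential here that $G$ is adjoint, i.e.\ that $h_2h_5h_7=1$: this collapses part of $\mathcal{H}$ and is exactly what makes the relations satisfiable (recall that in $E_7^{sc}(q)$ no maximal torus splits). The argument is uniform in the odd prime power $q$, since the only data that could depend on $q$ are the entries $\lambda_i$ of the $H_j$, and these may be taken among roots of unity of bounded order.

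The main obstacle is this last step: showing that the obstruction to a simultaneously relation-preserving choice of lifts — a class in $\mathcal{H}$, equivalently in a second cohomology group of $C_W(w)$ with coefficients in the torus — vanishes in the adjoint torus for each of the seven centralizers, without the sign bookkeeping (the $\eta_{r,s}$ and the structure constants) becoming unmanageable. I expect the order-$8$ elements $w_2w_5w_3w_4w_6$ and $w_{39}w_3w_5w_1w_4w_6$, and the order-$12$ elements $w_{23}w_5w_4w_3w_2$ and $w_1w_4w_6w_3w_7$, whose centralizers carry the largest $2$-part, to be the delicate cases.
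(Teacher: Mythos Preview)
Your outline is the paper's approach: for each representative, compute $C_W(w)$ with a presentation, choose a lift $n$ of $w$, lift the generators to $N_j=H_ju_j\in\overline{N}_{\sigma n}$, and verify the defining relations via Lemmas~\ref{conjugation} and~\ref{commutator}. Two of your expectations, however, do not match what actually happens and would mislead you when you try to carry out the computations.

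First, what separates these classes from those in the subsequent lemma is \emph{not} that $C_W(w)$ is noncyclic --- the centralizers in Table~\ref{t:split_E7} are almost all noncyclic as well. The genuine distinction is that for the classes there a complement can be found inside the Tits group $\mathcal{T}$ (all $H_j\in\mathcal{H}$), whereas for the seven classes here one is forced to use torus elements outside $\mathcal{H}$, typically built from a primitive fourth root of unity $\alpha$. So your suggestion to look first for $H_j\in\mathcal{H}$ will fail in every one of these cases; that is precisely why they are singled out.

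Second, your claim that the argument is uniform in $q$ with the $H_j$ among roots of unity of bounded order is not borne out. For Torus~6 ($w=w_1w_3w_5$, conjugate to $w_1w_2w_3$) the paper must split into the two cases $q\equiv1\pmod4$ and $q\equiv3\pmod4$, and the lift of $w_0$ is not $n_0$ but $H_0n_0$ with $H_0$ involving an element $\beta$ satisfying $\beta^{q+1}=-1$ (resp.\ $\delta$ with $\delta^{q-1}=-1$). The reason is exactly the one you skated over: while $n_0\in Z(\mathcal{T})$ and $n_0^2=1$ in the adjoint group, one has $[n_0,H_ju_j]=H_j^{-2}$, and once the $H_j$ involve $\alpha$ this need not vanish, so $n_0$ does not commute with the other lifts and must itself be corrected by a $q$-dependent torus element. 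Contrary to your prediction, the order-$8$ and order-$12$ classes (Tori~19, 20, 26, 30) turn out to be the easy ones --- there the $H_j$ can be taken with entries in $\{\pm1,\pm\alpha\}$ and the construction is uniform --- while Torus~6 is the delicate case.
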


\begin{proof}
We consider each case separately.

\textbf{Torus 4.} In this case $w=w_3w_1$ and $C_W(w)=\langle ww_0\rangle\times\langle w_2, w_{50}, w_7, w_6, w_5 \rangle\simeq \mathbb{Z}_{6} \times S_6$.
Here we use that 
\begin{multline*}
S_6\simeq\langle a, b, c, d, e~|~a^2=b^2=c^2=d^2=e^2=(ab)^3=(bc)^3=(cd)^3=(de)^3=[a,c]=[a,d]=[a,e]=\\=[b,d]=[b,e]=[c,e]=1\rangle,
\end{multline*}
and $w_2$, $w_{50}$, $w_7$, $w_6$, and $w_5$ satisfy this set of relations.

Let $\alpha\in\overline{\F}_p$ such that $\alpha^2=-1$ and set 
$$H_2=(1,\alpha,1,1,\alpha,-1,-\alpha), H_3=(1,\alpha,1,1,-\alpha,-1,\alpha), H_4=H_5=(-1,\alpha,1,-1,\alpha,1,\alpha),$$ $$\text{and }H_6=(-1,\alpha,1,-1,\alpha,-1,\alpha).$$

Let $n=n_3n_1$, $N_1=nn_0$, $N_2=H_2n_2$, $N_3=H_3n_{50}$, $N_4=H_4n_{7}$, $N_5=H_5n_{6}$, and $N_6=H_6n_5$.
We claim that $K=\langle N_1, N_2, N_3, N_4, N_5, N_6 \rangle$ is a complement for $\overline{T}_{\sigma n}$ in $\overline{N}_{\sigma n}$.

Since $H_i^{-1}H^{n_0}=H_i^{-2}=1$ for every $i\in\{2\ldots6\}$, Lemma~\ref{commutator} implies
that $[n_0,N_2]=[n_0,N_3]=[n_0,N_4]=[n_0,N_5]=[n_0,N_6]=1$. Using MAGMA, we see that $N_1^6=1$ and $[n_3n_1,n_2]=[n_3n_1,n_{50}]=[n_3n_1,n_7]=[n_3n_1,n_6]=[n_3n_1,n_5]=1$, so $n_2$, $n_{50}$, $n_7$, $n_6$, $n_5$ belong to $\overline{N}_{\sigma n}$.

By Lemma~\ref{conjugation}, we have
$H^{n}=(\lambda_1^{-1}\lambda_3, \lambda_2, \lambda_1^{-1}\lambda_4,\lambda_4,\lambda_5,\lambda_6,\lambda_7)$.
Therefore, $H_i^{n}=H_i$ for $i\in\{2,\ldots,6\}$. This implies that $H_i^{\sigma n}=(1,(-1)^{((q-1)/2)},1,1,(-1)^{((q-1)/2)},1,(-1)^{((q-1)/2)})H_i=H_i$, where  $i\in\{2,\ldots,6\}$, and hence $H_i\in \overline{T}_{\sigma n}$.
Moreover, Lemma~\ref{commutator} yields $[n_3n_1,N_i]=1$, where $i\in\{2,\ldots,6\}$. So $K\simeq\langle N_1 \rangle\times\langle N_2,N_3,N_4,N_5,N_6\rangle $.
 
Now we prove that $N_2^2=(N_2N_3)^3=[N_2,N_4]=[N_2,N_5]=[N_2,N_6]=1$. 
Since $n_2^2=h_2$, Lemma~\ref{conjugation} implies that $(Hn_2)^2=(\lambda_1^2,-\lambda_4,\lambda_3^2,\lambda_4^2,
\lambda_5^2,\lambda_6^2,\lambda_7^2)$. Therefore, $N_2^2=h_2h_5h_7=1$.
Calculations in MAGMA show that $[n_2,n_7]=[n_2,n_6]=[n_2,n_5]=1$. By Lemma~\ref{commutator}, 
to get that $[N_2,N_4]=[N_2,N_5]=[N_2,N_6]=1$ it is sufficient to verify that $H_4^{n_2}=H_4$, $H_5^{n_2}=H_5$, $H_6^{n_2}=H_6$ and $H_2=H_2^{n_7}=H_2^{n_6}=H_2^{n_5}$.
By Lemma~\ref{conjugation}, we find that
$$H^{n_2}=(\lambda_1,\lambda_2^{-1}\lambda_4,\lambda_3,\lambda_4,\lambda_5,\lambda_6,\lambda_7),
H^{n_5}=(\lambda_1,\lambda_2,\lambda_3,\lambda_4,\lambda_4\lambda_5^{-1}\lambda_6,\lambda_6,\lambda_7),$$
$$H^{n_6}=(\lambda_1,\lambda_2,\lambda_3,\lambda_4,\lambda_5,\lambda_5\lambda_6^{-1}\lambda_7,\lambda_7),
\text{ and }H^{n_7}=(\lambda_1,\lambda_2,\lambda_3,\lambda_4,\lambda_5,\lambda_6,\lambda_6\lambda_7^{-1}).$$
Applying these equations to $H_2$, $H_4$, $H_5$, and $H_6$, we infer that $N_2$ commutes with $N_4$, $N_5$ and $N_6$.
Finally, observe that $N_2N_3=H_2H_3^{n_2}n_2n_{50}$ and $H_2H_3^{n_2}=H_2(1,-\alpha,1,1,-\alpha,-1,\alpha)=1$. So $(N_2N_3)^3=(n_2n_{50})^3=1$, as claimed.

Now we prove that $N_3^2=(N_3N_4)^3=1$ and $[N_3,N_5]=[N_3,N_6]=1$. 
Using MAGMA, we see that $n_{50}^2=h_1h_2h_4h_6$. Lemma~\ref{conjugation} implies that $$(Hn_{50})^2=(-\lambda_1^2\lambda_2\lambda_4^{-1}\lambda_7,-\lambda_2^3\lambda_4^{-1}\lambda_7,\lambda_2^2\lambda_3^2\lambda_4^{-2}\lambda_7^2,-\lambda_2^3\lambda_4^{-1}\lambda_7^3,\lambda_2^2\lambda_4^{-2}\lambda_5^2\lambda_7^2,-\lambda_2\lambda_4^{-1}\lambda_6^2\lambda_7,\lambda_7^2).$$ 
Therefore, $N_3^2=h_2h_5h_7=1$. Calculations in MAGMA show that $[n_{50},n_6]=[n_{50},n_5]=1$. 
From the above equations, we see that $H_3^{n_6}=H_3$ and $H_3^{n_5}=H_3$.
Lemma~\ref{conjugation} implies that $H^{-1}H^{n_{50}}=(t,t,t^2,t^3,t^2,t,1)$, where $t=\lambda_2\lambda_4^{-1}\lambda_7$.
So $H_5^{-1}H_5^{n_{50}}=H_6^{-1}H_6^{n_{50}}=1$. By Lemma~\ref{commutator}, we infer that $N_3$ commutes with $N_5$ and $N_6$.
Observe that $N_3N_4=H_3H_4^{n_{50}}n_{50}n_7$. We know that $H_4=H_5$ and $H_5^{n_{50}}=H_5$, so $H_3H_4^{n_{50}}=H_3H_4=h_1h_2h_4h_6h_7$. Using MAGMA, we see that $(h_1h_2h_4h_6h_7n_{50}n_7)^3=1$ and hence $(N_3N_4)^3=1$.

Now we prove that $N_4^2=(N_4N_5)^3=1$ and $[N_4,N_6]=1$. 
First, observe that $N_4^2=(H_4n_7)^2=H_4H_4^{n_7}h_7$. By the above equation for $H^{n_7}$, we infer that
$H_4^{n_7}=H_4h_7$ and so $N_4^2=1$. Since $N_4N_5=H_4H_5^{n_7}n_7n_6$ and $H_4H_5^{n_7}=H_4H_4^{n_7}=h_7$, we have $(N_4N_5)^3=(h_7n_7n_6)^3=1$. Calculations in MAGMA show that $[n_7,n_5]=1$, so $[N_4,N_6]=1$ is equivalent to
$H_4^{-1}H_4^{n_5}=H_6^{-1}H_6^{n_7}$. By the above equations for $H^{n_7}$ and $H^{n_6}$, we have 
$H_4=H_4^{n_5}$ and $H_6=H_6^{n_7}$. Therefore,  $N_4$ and $N_6$ commute.

Finally, we verify that $N_5^2=N_6^2=(N_5N_6)^3=1$. Since $N_5^2=H_5H_5^{n_6}h_6$ and 
$N_6^2=H_6H_6^{n_5}h_5$, we infer that $N_5^2=N_6^2=1$. Observe that $N_5N_6=H_5H_6^{n_6}n_6n_5=H_5(-1,\alpha,1,-1,\alpha,1,\alpha)n_6n_5=h_2h_5h_7n_6n_5=n_6n_5$, so
$(N_5N_6)^3=(n_6n_5)^3=1$. Thus, $\langle N_2, N_3, N_4, N_5, N_6\rangle\simeq S_6$ and hence $K\simeq C_W(w)$.

\textbf{Torus 6.} In this case $w=w_1w_3w_5$ is conjugate to $w'=w_1w_2w_3$ and $$C_W(w')=\langle w'\rangle\times\langle w_0\rangle\times\langle w_5, w_6,w_7\rangle\simeq \mathbb{Z}_{6}\times \mathbb{Z}_2\times S_4.$$

Let $n=n_1n_2n_3$ and $\alpha\in\overline{\F}_p$ such that $\alpha^2=-1$. Using MAGMA, we see that $[n, n_5]=[n, n_6]=[n, n_7]=1$ and $n_5,n_6,n_7\in\overline{N}_{\sigma{n}}$.
Put $$H_2=(-1,\alpha,1,-1,\alpha,-1,\alpha), H_3=H_4=(-1,\alpha,1,-1,\alpha,1,\alpha).$$
By Lemma~\ref{conjugation},
$$H^{n}=(\lambda_3^{-1}\lambda_4, \lambda_2^{-1}\lambda_4, \lambda_1\lambda_3^{-1}\lambda_4, \lambda_4,\lambda_5,\lambda_6,\lambda_7).$$
Then $H_2^{\sigma n}=(-1,\alpha,1,-1,\alpha,-1,\alpha)^{\sigma}=H_2$ and $H_2\in\overline{T}_{\sigma n}$.
It is easy to see that $h_6\in\overline{T}_{\sigma n}$ and hence $H_3,H_4\in\overline{T}_{\sigma n}$.
Put $N_2=H_2n_5, N_3=H_3n_6, N_4=H_4n_7$.
We claim that $\langle N_2,N_3, N_4\rangle\simeq S_4$.

By Lemma~\ref{conjugation}, we have
$$H^{n_5}=(\lambda_1,\lambda_2,\lambda_3,\lambda_4,\lambda_4\lambda_5^{-1}\lambda_6,\lambda_6,\lambda_7),
H^{n_6}=(\lambda_1,\lambda_2,\lambda_3,\lambda_4,\lambda_5,\lambda_5\lambda_6^{-1}\lambda_7,\lambda_7),$$
$$H^{n_7}=(\lambda_1,\lambda_2,\lambda_3,\lambda_4,\lambda_5,\lambda_6,\lambda_6\lambda_7^{-1}).$$

Then $N_2^2=H_2H_2^{n_5}n_5^2=h_2h_7h_5=1$.  Similarly, we see that $N_3^2=N_4^2=h_2h_5h_7=1$.
Moreover, we have $$N_2N_3=H_2H_3^{n_5}n_5n_6=(-1,\alpha,1,-1,\alpha,-1,\alpha)(-1,\alpha,1,-1,\alpha,1,\alpha)n_5n_6=h_6n_5n_6, \text{ and }$$
$$N_3N_4=H_3H_4^{n_6}n_6n_7=(-1,\alpha,1,-1,\alpha,1,\alpha)(-1,\alpha,1,-1,\alpha,-1,\alpha)n_6n_7=h_6n_6n_7.$$
Calculations in MAGMA show that $(h_6n_5n_6)^3=(h_6n_6n_7)^3=1$. 
Hence, $(N_2N_3)^3=(N_3N_4)^3=1$ and $\langle N_2,N_3, N_4\rangle\simeq S_4$.

Assume that $q\equiv3\pmod{4}$. Let $\beta\in\overline{\F}_p$ such that $\beta^{q+1}=-1$, 
$$H_1=(1,\beta,1,1,-\alpha,-1,\alpha), H_0=(1,\alpha\beta,1,1,-\alpha,-1,\alpha).$$
Then $\alpha^{q}=-\alpha$ and $H_1^{\sigma n}= (1,\beta^{-1},1,1,-\alpha,-1,\alpha)^{\sigma}=(1,-\beta,1,1,\alpha,-1,-\alpha)=H_1$. Therefore,  we have $H_1\in \overline{T}_{\sigma n}$.
Since $H_0=H_1(1,\alpha,1,1,1,1,1)$ and $(1,\alpha,1,1,1,1,1)^{\sigma{n}}=(1,\alpha^{-1},1,1,1,1,1,)^{\sigma}=(1,\alpha,1,1,1,1,1)$, we also get $H_0\in\overline{T}_{\sigma n}$. Put $N_1=H_1n$ and $N_0=H_0n_0$.
We claim that $K=\langle N_1,N_0,N_2,N_3,N_4\rangle$ is a complement for $\overline{T}_{\sigma n}$. 

Calculations in MAGMA show that $n^6=h_2$. By Lemma~\ref{conjugation}, we have
$$(Hn)^6=(\lambda_4^2,-\lambda_4^3,\lambda_4^4,\lambda_4^6,\lambda_5^6,\lambda_6^6,\lambda_7^6).$$
Hence, $N_1^6=h_2h_5h_7=1$. 

Using the above equations, we find that $H_1^{-1}H_1^{n_5}=H_1^{-1}H_1^{n_6}=H_1^{-1}H_1^{n_7}=1$, $H_1^{-1}H_1^{N_0}=H_1^{-2}=(1,\beta^{-2},1,1,-1,1,-1).$ 
Moreover, we see that $H_2^{-1}H_2^{n}=H_3^{-1}H_3^{n}=H_4^{-1}H_4^{n}=1$ and $H_0^{-1}H_0^{n}=(1,-\beta^{-2},1,1,1,1,1).$ 
Hence, by Lemma~\ref{commutator} we obtain $[N_1,N_2]=[N_1,N_3]=[N_1,N_4]=[N_1,N_0]=1$.

Now we see that $H_2^{-2}=H_3^{-2}=H_4^{-2}=(1,-1,1,1,-1,1,-1)=1$.
On the other hand, $H_0^{-1}H_0^{n_5}=H_0^{-1}H_0^{n_6}=H_0^{-1}H_0^{n_7}=1$ and
hence Lemma~\ref{commutator} implies that $[N_0,N_2]=[N_0,N_3]=[N_0,N_4]=1$. So
$K\simeq\langle N_1\rangle\times\langle N_0\rangle\times\langle N_2,N_3,N_4\rangle\simeq \mathbb{Z}_6\times \mathbb{Z}_2\times S_4$ is a complement for $\overline{T}_{\sigma n}$.

Assume that $q\equiv1\pmod{4}$. Let $\alpha,\delta\in\overline{\F}_p$ such that $\alpha^2=-1$ and $\delta^{q-1}=-1$.
Observe that $\alpha^q=\alpha$ and $\delta^q=-\delta$. Put 
$$H_1=(1,1,1,1,\alpha,-1,-\alpha) \text{ and } H_0=(\delta^4,\alpha\delta^6,\delta^8,\delta^{12},\delta^9,\delta^6,\delta^3).$$
Then $H_1^{\sigma n}=(1,1,1,1,\alpha,-1,-\alpha)^{\sigma}=H_1$ and so $H_1\in \overline{T}_{\sigma n}$.
Now
$$H_0^{\sigma n}=(\delta^{-8}\delta^{12}, -\alpha\delta^{6}, \delta^4\delta^{-8}\delta^{12},\delta^{12},\delta^9,\delta^6,\delta^3)^{\sigma}=
(\delta^4,-\alpha\delta^6,\delta^8,\delta^{12},-\delta^9,\delta^6,-\delta^3)=H_0$$ and hence $H_0\in\overline{T}_{\sigma n}$.

We claim that $K=\langle N_1,N_2,N_3,N_4,N_0\rangle$ is a complement for $\overline{T}_{\sigma n}$. 
As it was shown above, we have 
$(Hn)^6=(\lambda_4^2,-\lambda_4^3,\lambda_4^4,\lambda_4^6,\lambda_5^6,\lambda_6^6,\lambda_7^6).$
Therefore,  we find that $N_1^6=(1,-1,1,1,-1,1,-1)=1$. 

We see above that $H_2^{-1}H_2^{n}=H_3^{-1}H_3^{n}=H_4^{-1}H_4^{n}=1$.

Since $H^{-1}H^{n}=(\lambda_1^{-1}\lambda_3^{-1}\lambda_4, \lambda_2^{-2}\lambda_4, \lambda_1\lambda_3^{-2}\lambda_4, 1, 1, 1, 1),$ then
$$H_0^{-1}H_0^{n}=(1,-1,1,1,1,1,1)=h_2.$$ 
Recall that
$$H^{-1}H^{n_5}=(1,1,1,1,\lambda_4\lambda_5^{-2}\lambda_6,1,1), H^{-1}H^{n_6}=(1,1,1,1,1,\lambda_5\lambda_6^{-2}\lambda_7,1),
H^{-1}H^{n_7}=(1,1,1,1,1,1,\lambda_6\lambda_7^{-2}).$$
Then $H_1^{-1}H_1^{n_5}=H_1^{-1}H_1^{n_6}=H_1^{-1}H_1^{n_7}=1$ and $$H_1^{-1}H_1^{N_0}=H_1^{-2}=(1,1,1,1,-1,1,-1)=h_5h_7=h_2.$$ 

Hence, by Lemma~\ref{commutator}, we obtain $[N_1,N_2]=[N_1,N_3]=[N_1,N_4]=[N_1,N_0]=1$.

Furthermore, $H_0^{-1}H_0^{n_5}=H_0^{-1}H_0^{n_6}=H_0^{-1}H_0^{n_7}=1$ and
by Lemma~\ref{commutator} we obtain $[N_0,N_2]=[N_0,N_3]=[N_0,N_4]=1$. Hence,
$K\simeq\langle N_1\rangle\times\langle N_0\rangle\times\langle N_2,N_3,N_4\rangle\simeq \mathbb{Z}_6\times \mathbb{Z}_2\times S_4$ is a complement for $\overline{T}_{\sigma n}$.

\textbf{Torus 9.}  In this case $w=w_1w_2w_3w_5$ and $C_W(w)=\langle w_0w^2\rangle\times\langle w_7 \rangle\times\langle w_5, w_{58}w_{59} \rangle\simeq \mathbb{Z}_6\times \mathbb{Z}_2\times D_8$.

Let $n=h_4n_1n_2n_3n_5$ and $\alpha\in\overline{\F}_p$ such that $\alpha^2=-1$. Denote $N_1=n_0n^2$, $N_2=H_1n_7$,
$N_3=H_1h_6n_5$, and $N_4=H_1h_1h_4n_{58}n_{59}$, where $H_1=(-1,\alpha,1,-1,-\alpha,1,\alpha)$. We claim that
$K=\langle N_1,N_2,N_3, N_4\rangle$ is a complement for $\overline{T}_{\sigma n}$ in $\overline{N}_{\sigma n}$.

Using MAGMA, we see that $[n,n_7]=[n,h_6n_5]=[n,h_1h_4n_{58}n_{59}]=1$, so $n_7$, $h_6n_5$, and $h_1h_4n_{58}n_{59}$
belong to $\overline{N}_{\sigma n}$. Now we verify that $H_1\in \overline{T}_{\sigma n}$. By Lemma~\ref{conjugation},
$$H^n=(\lambda_3^{-1}\lambda_4,\lambda_2^{-1}\lambda_4,\lambda_1\lambda_3^{-1}\lambda_4,\lambda_4,\lambda_4\lambda_5^{-1}\lambda_6,\lambda_6,\lambda_7).$$
Applying this to $H_1$, we find that $H_1^n=(-1,-\alpha^{-1},1,-1,-(-\alpha)^{-1},1,\alpha)=H_1$.
Then $H_1^\sigma=(-1,\alpha^q,1,-1,(-\alpha)^q,1,\alpha^q)=(1,\alpha^{q-1},1,1,\alpha^{q-1},1,\alpha^{q-1})H_1=H_1$ and hence $H_1\in\overline{T}_{\sigma n}$. So $N_2$, $N_3$, and $N_4$ belong to $\overline{N}_{\sigma n}$.

Calculations in MAGMA show that $N_1^6=1$. Now we verify that $[N_1,N_2]=1$, $[N_1,N_3]=1$, and $[N_1,N_4]=1$.
Using MAGMA, we see that $[N_1,n_7]=[N_1,h_6n_5]=[N_1,h_1h_4n_{58}n_{59}]=1$. By Lemma~\ref{conjugation}, 
$$H^{-1}H^{N_1}=(\lambda_3^{-1},\lambda_2^{-2},\lambda_1\lambda_3^{-1}\lambda_4^{-1},\lambda_4^{-2},
\lambda_5^{-2},\lambda_6^{-2},\lambda_7^{-2}).$$
Therefore,  $H_1^{-1}H_1^{N_1}=h_2h_5h_7=1$ and hence $N_1$ commutes with $N_2$, $N_3$ and $N_4$ by Lemma~\ref{commutator}.
So $K\simeq\langle N_1\rangle\times\langle N_2,N_3,N_4\rangle$ 

Now we prove that $N_2^2=1$ and $[N_2,N_3]=[N_2,N_4]=1$. Since $n_7^2=h_7$, we have $N_2^2=H_1H_1^{n_7}h_7$. By Lemma~\ref{conjugation},
$H^{n_7}=(\lambda_1,\lambda_2,\lambda_3,\lambda_4,\lambda_5,\lambda_6,\lambda_6\lambda_7^{-1})$ and so $H_1^{n_7}=h_7H_1$. Therefore, $N_2^2=h_2h_5h_7h_7^2=1$.
Using MAGMA, we see that $[n_7,h_6n_5]=h_7$ and $[n_7,h_1h_4n_{58}n_{59}]=1$.  By Lemma~\ref{conjugation},
$$H^{n_5}=(\lambda_1,\lambda_2,\lambda_3,\lambda_4,\lambda_4\lambda_5^{-1}\lambda_6,\lambda_6,\lambda_7),$$
$$H^{n_{58}n_{59}}=(\lambda_1\lambda_6^{-1},\lambda_5\lambda_6^{-2},\lambda_3\lambda_6^{-2},\lambda_4\lambda_6^{-3},
\lambda_2\lambda_6^{-2},\lambda_6^{-1},\lambda_6^{-1}\lambda_7).$$
Applying to $H_1$, we find that $H_1^{n_5}=H_1$ and $H_1^{n_{58}n_{59}}=h_2h_5H_1=h_7H_1$. 
We see above that $H_1^{n_7}=H_1h_7$, so $h_7H_1^{n_5}=H_1^{n_7}=H_1^{n_{58}n_{59}}$.
It follows from Lemma~\ref{commutator} that $[N_2,N_3]=[N_2,N_4]=1$.

Finally, we show that $\langle N_3, N_4 \rangle\simeq D_8$. Namely, we verify that $N_3^2=N_4^2=(N_3N_4)^4=1$.
Using MAGMA, we see that $(h_6n_5)^2=1$ and $(h_1h_4n_{58}n_{59})^2=h_7$.
By Lemma~\ref{conjugation},
$$(Hh_6n_5)^2=(\lambda_1^2,\lambda_2^2,\lambda_3^2,\lambda_4^2,\lambda_4\lambda_6,\lambda_6^2,\lambda_7^2),$$
$$(Hh_1h_4n_{58}n_{59})^2=(\lambda_1^2\lambda_6^{-1},\lambda_2\lambda_5\lambda_6^{-2},\lambda_3^2\lambda_6^{-2},\lambda_4^2\lambda_6^{-3},\lambda_2\lambda_5\lambda_6^{-2},1,-\lambda_6^{-1}\lambda_7^2).$$
Applying these equations to $H_1$, we find that $N_3^2=N_4^2=1$.
Since 
\begin{multline*}
(H_1h_6n_5)(H_1h_1h_4n_{58}n_{59})=(H_1H_1^{n_5})(h_6n_5h_1h_4n_{58}n_{59})=(H_1^2)(h_6n_5h_1h_4n_{58}n_{59})=\\=h_6n_5H_1h_1h_4n_{58}n_{59}, 
\end{multline*}
we infer that $(N_3N_4)^4=(h_6n_5h_1h_4n_{58}n_{59})^4=1$. 
Thus, $K\simeq \mathbb{Z}_6\times \mathbb{Z}_2\times D_8\simeq C_W(w)$
and hence $K$ is a required complement.

\textbf{Torus 13.} In this case $w=w_3w_2w_5w_4$ and $C_W(w)\simeq \mathbb{Z}_6\times \mathbb{Z}_2\times S_4$. Using GAP, we see that $$C_W(w)\simeq\langle a,b,c,d~|~a^6=b^2=c^2=d^2=[a,b]=[a,c]=[a,d]=(bd)^2=(bc)^4=(cd)^3=1\rangle$$
and $w$, $w_7$, $w_{23}w_{24}$,  $w_{20}w_{21}$ are elements of $C_W(w)$ that satisfy this set of relations.

Let $\alpha,\beta\in\overline{\F}_p$ such that $\alpha^2=-1$ and $\beta^{q-1}=(-1)^{(q+1)/2}$. Put $n=n_3n_2n_5n_4$, $N_1=H_1n$, $N_2=H_2n_7$, $N_3=h_4h_6n_{23}n_{24}$, and $N_4=h_3h_5n_{20}n_{21}$, where $H_1=(-1,\alpha,1,-1,-\alpha,1,\alpha)$ and $H_2=(1,\alpha,-1,1,\alpha,1,\beta)$.

We claim that $K=\langle N_1, N_2, N_3, N_4 \rangle$ is a complement for $\overline{T}_{\sigma n}$ in $\overline{N}_{\sigma n}$.

First, we verify that $H_1$ and $H_2$ belong to $\overline{T}_{\sigma n}$. By Lemma~\ref{conjugation},
$$H^n=(\lambda_1, \lambda_3\lambda_4^{-1}\lambda_5, \lambda_1\lambda_2\lambda_4^{-1}\lambda_5,
\lambda_2\lambda_3\lambda_4^{-1}\lambda_5,\lambda_2\lambda_3\lambda_4^{-1}\lambda_6,
\lambda_6,\lambda_7).$$
Therefore,  $H_1^n=H_1$ and $H_2^n=(1,-\alpha,-1,1,-\alpha,1,\beta)$.
Then $$H_1^{n\sigma}=H_1(1,(-1)^{(q-1)/2},1,1,(-1)^{(q-1)/2},1,(-1)^{(q-1)/2})=H_1$$
and $H_2^{n\sigma}=(1,\alpha(-1)^{(q+1)/2},-1,1,\alpha(-1)^{(q+1)/2},1,\beta^q)$.
Since $\beta^q=\beta(-1)^{(q+1)/2}$, we infer that $H_2^{n\sigma}=H_2$ and hence $H_1$, $H_2$
belong to $\overline{T}_{\sigma n}$.

Calculations in MAGMA show that $[n,n_7]=[n,N_3]=[n,N_4]=1$, and hence $N_1$, $N_2$, $N_3$, and $N_4$ lie in~$\overline{N}_{\sigma n}$. Since $n^6=1$, Lemma~\ref{conjugation} implies that
$(Hn)^6=(\lambda_1^6,\lambda_1^3\lambda_6^3,\lambda_1^6\lambda_6^3,\lambda_1^6\lambda_6^6,
\lambda_1^3\lambda_6^6,\lambda_6^6,\lambda_7^6).$ So $N_1^6=(1,-1,1,1,-1,1,-1)=1$.
Now we prove that $N_1$ commutes with $N_2$, $N_3$, and $N_4$. By Lemma~\ref{commutator}, it suffices to verify that $H_1^{-1}H_1^{N_3}=H_1^{-1}H_1^{N_4}=1$ and $H_1^{-1}H_1^{n_7}=H_2^{-1}H_2^n$. By Lemma~\ref{conjugation}, we see that
$$H^{-1}H^{N_3}=(1,\lambda_2^{-1}\lambda_3\lambda_6^{-1}\lambda_7,\lambda_1\lambda_2\lambda_3^{-1}\lambda_6^{-1}\lambda_7,\lambda_1\lambda_6^{-2}\lambda_7^2,\lambda_1\lambda_6^{-2}\lambda_7^2,\lambda_1\lambda_6^{-2}\lambda_7^2,1),$$
$$H^{-1}H^{N_4}=(\lambda_1^{-2}\lambda_6,\lambda_1^{-1}\lambda_2^{-1}\lambda_5,\lambda_1^{-2}\lambda_6,\lambda_1^{-2}\lambda_6,\lambda_1^{-1}\lambda_2\lambda_5^{-1}\lambda_6,1,1),$$
$$H^{-1}H^{n_7}=(1,1,1,1,1,1,\lambda_6\lambda_7^{-2}).$$
Therefore, $H_1^{-1}H_1^{N_3}=1$, $H_1^{-1}H_1^{N_4}=1$, and $H_1^{-1}H_1^{n_7}=h_7$.
We know that $H_2^n=(1,-\alpha,-1,1,-\alpha,1,\beta)=h_2h_5H_2=h_7H_2$ and hence $H_2^{-1}H_2^n=H_1^{-1}H_1^{n_7}$.
Thus, $K\simeq \langle N_1\rangle\times \langle N_2, N_3, N_4\rangle$.

Calculations in MAGMA show that $N_3^2=N_4^2=(N_3N_4)^3=1$, so it remains to verify that $N_2^2=(N_2N_3)^4=(N_2N_4)^2=1$.
Since $n_7^2=h_7$, $(n_7N_3)^4=h_2h_3$, and $(n_7N_4)^2=h_7$, Lemma~\ref{conjugation} implies that
$$ (Hn_7)^2=(\lambda_1^2,\lambda_2^2,\lambda_3^2,\lambda_4^2,\lambda_5^2,\lambda_6^2,-\lambda_6),$$
$$ (Hn_7N_3)^4=(\lambda_1^4,-\lambda_1\lambda_2^2\lambda_3^2\lambda_6^{-2},-\lambda_1^3\lambda_2^2\lambda_3^2\lambda_6^{-2},
\lambda_1^4\lambda_4^4\lambda_6^{-4},\lambda_1^4\lambda_5^4\lambda_6^{-4},\lambda_1^4,\lambda_1^2),$$
$$ (Hn_7N_4)^2=(\lambda_6,\lambda_1^{-1}\lambda_2\lambda_5,\lambda_1^{-2}\lambda_3^2\lambda_6,\lambda_1^{-2}\lambda_4^2\lambda_6,\lambda_1^{-1}\lambda_2\lambda_5\lambda_6,\lambda_6^2,-\lambda_6).$$
We apply these equations to $H_2$ and obtain $N_2^2=h_2h_5h_7=1$,  $(N_2N_3)^4=1$, and $(N_2N_4)^2=h_2h_5h_7=1$.
Thus, $K\simeq C_W(w)$ and hence $K$ is a complement for $\overline{T}_{\sigma n}$.

\textbf{Torus 19.} In this case $w=w_2w_5w_3w_4w_6$ and $C_W(w)=\langle w\rangle\times\langle w_0\rangle\times\langle w_{63} \rangle\simeq \mathbb{Z}_{8}\times \mathbb{Z}_2\times \mathbb{Z}_2$.

Put $n=n_2n_5n_3n_4n_6$. Using MAGMA, we see that $[n, n_{63}]=1$ and hence $n_{63}\in \overline{N}_{\sigma n}$.

Let $\alpha$ be an element of $\overline{\F}_p$ such that $\alpha^2=-1$. 
Put $H_2=(-1,\alpha,1,-1,-\alpha,1,\alpha)$. 
By Lemma~\ref{conjugation},
$$H^{n}=(\lambda_1, \lambda_3\lambda_4^{-1}\lambda_5, \lambda_1\lambda_2\lambda_4^{-1}\lambda_5, \lambda_2\lambda_3\lambda_4^{-1}\lambda_5, \lambda_2\lambda_3\lambda_4^{-1}\lambda_5\lambda_6^{-1}\lambda_7, \lambda_5\lambda_6^{-1}\lambda_7, \lambda_7).$$
Using this equality, we see that $H_2^{\sigma n}=H_2^{\sigma}=H_2$ and hence $H_2\in\overline{T}_{\sigma n}$.

Let $N_2=H_2n_{63}$. We claim that $K=\langle n, n_0, N_1 \rangle$ is a complement for $\overline{T}_{\sigma n}$ in $\overline{N}_{\sigma n}$.
Calculations in MAGMA show that $n^8=1$.
Since $H_2^n=H_2$, we have $H_2^{-1}H_2^n=1$ and the equality $nN_2=N_2n$ follows from Lemma~\ref{commutator}. 
So $K\simeq\langle n \rangle\times\langle n_0, N_2\rangle$.

By Lemma~\ref{conjugation}, $H^{n_{63}}=(\lambda_1^{-1}, \lambda_1^{-2}\lambda_2, \lambda_1^{-3}\lambda_3, \lambda_1^{-4}\lambda_4, \lambda_1^{-3}\lambda_5, \lambda_1^{-2}\lambda_6, \lambda_1^{-1}\lambda_7).$
Then $N_2^2=H_2H_2^{n_{63}}h_2h_3=1$. Finally, we see that $H_2^{-1}H_2^{n_0}=H_2^{-2}=(1,-1,1,1,-1,1,-1)=1$ and hence $N_2N_0=N_0N_2$ by Lemma~\ref{commutator}.
Thus, $K=\langle N_1 \rangle\times\langle N_2 \rangle\times\langle N_0 \rangle\simeq \mathbb{Z}_{8}\times \mathbb{Z}_2\times \mathbb{Z}_2$, as claimed.

\textbf{Torus 20.} In this case $w=w_{23}w_5w_4w_3w_2$ and $C_W(w)=\langle w\rangle\times\langle w_0\rangle\times\langle w_{63} \rangle\simeq \mathbb{Z}_{12}\times \mathbb{Z}_2\times \mathbb{Z}_2$.

Let $\alpha$ be an element of $\overline{\F}_p$ such that $\alpha^2=-1$ and $n=n_{23}n_5n_4n_3n_2$.
Put $N_1=h_1n$ and $N_2=H_2n_{63}$, where $H_2=(-1,-\alpha,1,1,\alpha,-1,\alpha)$. 
By Lemma~\ref{conjugation},
$$H^{n}=(\lambda_1, \lambda_1\lambda_3^{-1}\lambda_4\lambda_6^{-1}\lambda_7, \lambda_1\lambda_3^{-1}\lambda_4, \lambda_1^2\lambda_3^{-2}\lambda_4\lambda_5\lambda_6^{-1}\lambda_7, \lambda_1^2\lambda_3^{-2}\lambda_4\lambda_7, \lambda_1\lambda_2\lambda_3^{-1}\lambda_7, \lambda_7).$$
Using this equality, we see that 
$H_2^{n}=h_2h_3H_2$ and hence $H_2^{n\sigma}h_2h_3=H_2$. Calculations in MAGMA show that $[N_1,n_{63}]=h_2h_3$.
Therefore,  $H_2^{n\sigma}[N_1,n_{63}]=H_2$ and $N_2\in \overline{N}_{\sigma n}$ by Lemma~\ref{normalizer}.

We claim that $K=\langle N_1, N_2, n_0 \rangle$ is a complement for $\overline{T}_{\sigma n}$ in $\overline{N}_{\sigma n}$.
Calculations in MAGMA show that $N_1^{12}=1$. By Lemma~\ref{conjugation},
$$H^{n_{63}}=(\lambda_1^{-1}, \lambda_1^{-2}\lambda_2, \lambda_1^{-3}\lambda_3, \lambda_1^{-4}\lambda_4, \lambda_1^{-3}\lambda_5, \lambda_1^{-2}\lambda_6, \lambda_1^{-1}\lambda_7).$$
Then $N_2^2=H_2H_2^{n_{63}}h_2h_3=1$. 
Furthermore, we have  $H_2^{-1}H_2^{N_0}=H_2^{-2}=(1,-1,1,1,-1,1,-1)=1$ and hence $N_2n_0=n_0N_2$ by Lemma~\ref{commutator}.
Thus, $K\simeq \mathbb{Z}_{12}\times \mathbb{Z}_2\times \mathbb{Z}_2$, as claimed.

\textbf{Torus 26.} In this case $w=w_1w_4w_6w_3w_7$ and $C_W(w)=\langle w\rangle\times\langle w_0\rangle\times\langle w_{59} \rangle\simeq \mathbb{Z}_{12}\times \mathbb{Z}_2\times \mathbb{Z}_2$.

Let $\alpha$ be an element of $\overline{\F}_p$ such that $\alpha^2=-1$.
Put $n=n_1n_4n_6n_3n_7$, $N_1=h_2n$, and $N_2=H_2h_3n_{59}$, where $H_2=(1,\alpha,1,1,-\alpha,-1,\alpha)$. 
Calculations in MAGMA show that $[N_1,h_3n_{59}]=1$ and hence $h_3n_{59}\in \overline{N}_{\sigma n}$.
By Lemma~\ref{conjugation},
$$H^{n}=(\lambda_3^{-1}\lambda_4, \lambda_2, \lambda_1\lambda_3^{-1}\lambda_4, \lambda_1\lambda_2\lambda_3^{-1}\lambda_5, \lambda_5, \lambda_5\lambda_7^{-1}, \lambda_6\lambda_7^{-1}).$$
Using this equality, we see that 
$H_2^{\sigma n}=H_2^{\sigma}=H_2$. Thus, $H_2\in\overline{N}_{\sigma n}$ and $N_2\in\overline{N}_{\sigma n}$.

We claim that $K=\langle N_1, n_0, N_2\rangle$ is a complement for $\overline{T}_{\sigma n}$ in $\overline{N}_{\sigma n}$.
Calculations in MAGMA show that $N_1^{12}=1$.
Since $H_2^n=H_2$, we have $H_2^{-1}H_2^n=1$. Hence, the equality $N_1N_2=N_2N_1$ follows from Lemma~\ref{commutator}. 

By Lemma~\ref{conjugation},
$$H^{n_{59}}=(\lambda_1\lambda_2\lambda_5^{-1}, \lambda_2^2\lambda_5^{-1}, \lambda_2^2\lambda_3\lambda_5^{-2}, \lambda_2^3\lambda_4\lambda_5^{-3}, \lambda_2^3\lambda_5^{-2}, \lambda_2^2\lambda_5^{-2}\lambda_6, \lambda_2\lambda_5^{-1}\lambda_7).$$
Then $N_2^2=H_2H_2^{n_{59}}h_1h_4=1$. 
Furthermore, we see that $H_2^{-1}H_2^{n_0}=H_2^{-2}=(1,-1,1,1,-1,1,-1)=1$ and by Lemma~\ref{commutator} we have $N_2n_0=n_0N_2$.
Thus, we conclude that $K=\langle N_1 \rangle\times\langle N_2 \rangle\times\langle n_0 \rangle\simeq \mathbb{Z}_{12}\times \mathbb{Z}_2\times \mathbb{Z}_2$, as claimed.

\textbf{Torus 30.} In this case $w=w_{39}w_{3}w_{5}w_1w_4w_6$ and $C_W(w)=\langle w\rangle\times\langle w_0\rangle\times\langle w_{53}\rangle\simeq \mathbb{Z}_8\times \mathbb{Z}_2\times \mathbb{Z}_2$. 
Put $n=n_{39}n_3n_5n_1n_4n_6$. Using MAGMA, we see that $[n, n_{53}]=1$ and hence $n_{53}\in \overline{N}_{\sigma n}$.

Let $\alpha$ be an element of $\overline{\F}_p$ such that $\alpha^2=-1$. 
Put $H_2=(1,\alpha,-1,-1,-\alpha,1,-\alpha)$. 
By Lemma~\ref{conjugation},
\begin{multline*}
H^{n}=(\lambda_3\lambda_4^{-1}\lambda_5\lambda_7^{-1}, \lambda_1\lambda_2\lambda_4^{-1}\lambda_5\lambda_7^{-1}, \lambda_2\lambda_3\lambda_4^{-2}\lambda_5^2\lambda_7^{-1}, \lambda_1\lambda_2\lambda_3\lambda_4^{-2}\lambda_5^2\lambda_7^{-1}, \lambda_1\lambda_2\lambda_3\lambda_4^{-2}\lambda_5^2\lambda_6^{-1},\\ \lambda_1\lambda_4^{-1}\lambda_5^2\lambda_6^{-1}, \lambda_1\lambda_4^{-1}\lambda_5).
\end{multline*}

Using this equality, we see that $H_2^{\sigma n}=(1,-\alpha,-1,-1,\alpha,1,\alpha)^{\sigma}=H_2$ and hence $H_2\in \overline{T}_{\sigma n}$.

Let $N_1=n$ and $N_2=H_2n_{53}$. We claim that $K=\langle N_1, n_0, N_2 \rangle$ is a complement for $\overline{T}_{\sigma n}$ in $\overline{N}_{\sigma n}$.
Calculations in MAGMA show that $N_1^8=1$.
Since $H_2^n=H_2$, we have $H_2^{-1}H_2^n=1$ and the equality $N_1N_2=N_2N_1$ follows from Lemma~\ref{commutator}. By Lemma~\ref{conjugation},
$$H^{n_{53}}=(\lambda_1\lambda_2^{-1}\lambda_7, \lambda_2^{-1}\lambda_7^2, \lambda_2^{-2}\lambda_3\lambda_7^2, \lambda_2^{-3}\lambda_4\lambda_7^3, \lambda_2^{-2}\lambda_5\lambda_7^2, \lambda_2^{-1}\lambda_6\lambda_7, \lambda_7).$$
Then $N_2^2=H_2H_2^{n_{53}}h_1h_4h_6=(1,\alpha,-1,-1,-\alpha,1,-\alpha)(-1,\alpha,-1,1,-\alpha,-1,-\alpha)h_1h_4h_6=1$. 
Further,  $H_2^{-1}H_2^{n_0}=H_2^{-2}=(1,-1,1,1,-1,1,-1)=1$ and by Lemma~\ref{commutator} we have $N_2n_0=n_0N_2$.
Thus, $K=\langle N_1 \rangle\times\langle N_2 \rangle\times\langle n_0 \rangle\simeq \mathbb{Z}_{8}\times \mathbb{Z}_2\times \mathbb{Z}_2$, as claimed. 
\end{proof}

\begin{lemma} Suppose that $w$ or $ww_0$ is conjugate to one of the following:
$w_1w_5w_3w_6$, $w_1w_4w_3w_2$, $w_1w_5w_3w_6w_2$, $w_1w_4w_5w_3w_{53}$, $w_1w_4w_6w_3w_5$,
 $w_1w_5w_2w_3w_6w_{53}$, $w_1w_4w_6w_3w_5w_{53}$, $w_1w_4w_6w_3w_2w_5$,
$w_1w_4w_6w_{16}w_3w_2w_6$, $w_1w_4w_{16}w_3w_2w_{40}$, $w_1w_4w_6w_2w_3w_7$, or
$w_1w_4w_6w_3w_5w_7$. Then $T$ has a complement in its algebraic normalizer.
\end{lemma}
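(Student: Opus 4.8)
The plan is to prove this the same way as the previous lemma --- by exhibiting a complement explicitly --- but following one uniform recipe. By Lemma~\ref{l:ww0} we may assume $w$ is one of the listed elements, and by Remark~\ref{r:nonsplit} it suffices, for \emph{one} lift $n$ with $\pi(n)=w$, to produce a complement for $\overline{T}_{\sigma n}$ in $\overline{N}_{\sigma n}$; we keep the conventions of this subsection ($n_0\in Z(\mathcal{T})$, $n_0^2=h_2h_5h_7=1$, and the $h_i$ identified with their images in $G=E_7^{ad}(q)$). For each $w$ I would first determine with GAP the centralizer $C_W(w)$, a short presentation of it, and a generating set of the form $\{w,w_0,x_1,\dots,x_k\}$; for these ``leftover'' tori $C_W(w)$ is small in each case --- a direct product of a few cyclic groups, with at most one small non-abelian factor --- so $k$ is small and the $x_i$ may be taken to be reflections $w_j$ or short products $w_jw_{j'}$. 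For the part $\langle w\rangle\times\langle w_0\rangle$ I take the lift $n=n_w\in\mathcal{T}$ from Table~\ref{lifts_E7}: it has $|n_w|=|w|$, so $\langle n_w\rangle\cong\langle w\rangle$, and since $n_0$ is a central involution of $\mathcal{T}$ commuting with $n_w$, the group $\langle n_w,n_0\rangle$ realizes $\langle w\rangle\times\langle w_0\rangle$.

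Next, for each extra generator $x_i$ I look for a lift $N_i=H_in_i$ with $n_i\in\mathcal{T}$, $\pi(n_i)=x_i$ and $H_i\in\overline{T}$, subject to two requirements. The first is $H_i\in\overline{T}_{\sigma n}$, which by Lemma~\ref{normalizer} reduces to the equation $H_i^{\sigma n}=H_i$ (computed via Lemma~\ref{conjugation}) and is solved by taking the coordinates of $H_i$ among $\pm1$ and suitable roots of $-1$ or of $(-1)^{(q\pm1)/2}$ in $\overline{\F}_p$, splitting into $q\equiv1$ and $q\equiv3\pmod4$ where needed (as for Tori $6$ and $13$ above). The second is that $n_w,N_1,\dots,N_k$ satisfy the chosen presentation of $C_W(w)$; each relation $N_i^{m_i}=1$, $(N_iN_j)^{m_{ij}}=1$, $[N_i,n_w]=1$, $[N_i,N_j]=1$ unwinds, via Lemmas~\ref{conjugation} and \ref{commutator}, into an identity between explicit elements of $\overline{T}$ --- picking up elements of $\mathcal{H}$ from the factors $n_r^2=h_r$ and the commutators $[n_i,n_j]$ --- which I verify in MAGMA using $h_2h_5h_7=1$. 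Once both requirements hold, $K=\langle n_w,N_1,\dots,N_k\rangle\leqslant\overline{N}_{\sigma n}$ has order $|C_W(w)|$ and meets $\overline{T}_{\sigma n}$ trivially, so it is a complement; by Remark~\ref{r:nonsplit}, $N(G,T)$ splits over $T$ (and the preimage of $w$ in $K$ is automatically a lift of order $|w|$). The outcome of these computations --- case, $C_W(w)$, $n_w$, the $N_i$ --- I would record in a table analogous to Table~\ref{t:nonsplit_E7}.

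The main obstacle is the second requirement in the even-order cases (Tori $15,17,18,22,23,25$, of orders $6,10,6,6,12,6$): there the squares $(H_in_i)^2$ and the commutators $[n_i,n_j]$ carry nontrivial elements of $\mathcal{H}$, and one must tune the coordinates of the $H_i$ --- replacing $n_i$ by $h\,n_i$ for a suitable $h\in\mathcal{H}$ when necessary, as was done for Tori $26$ and $30$ in the previous lemma --- so that these defects cancel against the relation $h_2h_5h_7=1$. A bad choice of representative inside the conjugacy class, or of the lift $n_i$, can block the cancellation, so one may need to try conjugate representatives $w'$ in the style of Table~\ref{t:nonsplit_E7}. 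For the odd-order cases (Tori $10,12,21,24,27,29$, of orders $3,5,3,9,15,7$) the verification is expected to be more routine: $\langle w\rangle$ contains no involution, so the $2$-torsion obstructions from $\mathcal{H}$ only affect the small even part of $C_W(w)$ --- which at least contains the central $w_0$, lifted by $n_0$ --- and are handled as above with fewer interactions, leaving $K$ visibly a complement.
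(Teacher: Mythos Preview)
Your plan is correct and follows the same overall strategy as the paper: pick a lift $n$ of $w$, write down a presentation of $C_W(w)$, and exhibit generators in $\overline{N}_{\sigma n}$ satisfying it. The one substantive difference is that the paper discovers a simplification you do not anticipate: for \emph{every} torus in this list the complement can be generated entirely inside the Tits group $\mathcal{T}$. That is, each generator has the form $h\,n_i$ with $h\in\mathcal{H}$ rather than $H_in_i$ with $H_i\in\overline{T}$ involving a square root of $-1$; membership in $\overline{N}_{\sigma n}$ is then just the commutator test $[n,h\,n_i]=1$ of Lemma~\ref{normalizer}(ii), and all the defining relations become identities in $\mathcal{T}$ verifiable by a single MAGMA run, with no dependence on $q$ and no $q\equiv\pm1\pmod4$ case split. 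Your more general ansatz would certainly succeed as well, but it carries exactly the complications you flag as obstacles --- tuning $H_i$ against $\mathcal{H}$-defects, possibly switching to conjugate representatives --- and these turn out to be avoidable here. (It is precisely because this shortcut fails for the seven tori of the preceding lemma that those were singled out for individual treatment with genuine $\overline{T}$-coefficients.) One minor point: when you say $H_i\in\overline{T}_{\sigma n}$ ``reduces to $H_i^{\sigma n}=H_i$'' via Lemma~\ref{normalizer}, note that $N_i=H_in_i\in\overline{N}_{\sigma n}$ requires $H_i=H_i^{\sigma n}[n,n_i]$, so you must either first arrange $[n,n_i]=1$ or absorb that commutator into $H_i$.
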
 

\begin{proof} Our strategy is the same in all cases.
For an element $w$, we choose $n\in\mathcal{T}$ such that $\pi(n)=w$ and a set of relations that defines $C_W(w)$. Then we verify that
generators of a subgroup of $\overline{N}_{\sigma n}$ satisfy this set of relations and generate a complement for $\overline{T}_{\sigma n}$. All data is listed in Table~\ref{t:split_E7}.
As an example, we consider $w=w_1w_5w_3w_6$ that corresponds to Torus 10 in Table~\ref{t:main:E7}.
In this case $C_W(w)=\langle ww_0 \rangle \times \langle w_2, w_{53} \rangle\times\langle w_2w_{32}w_{35}w_{46}, w_2w_{28}w_{42}w_{43} \rangle\simeq\mathbb{Z}_6\times S_3\times S_3$. 
Using GAP, we see that $C_W(w)$ has the following presentation:
\begin{multline*}
\langle a,b,c,d,e~|~a^6=b^2=c^2=d^2=e^2=[a,b]=[a,c]=[a,d]=[a,e]=\\(bc)^3=(de)^3=[b,d]=[b,e]=[c,d]=[c,e]=1\rangle.
\end{multline*}
Moreover, we verify that  $ww_0$, $w_2$, $w_{53}$, $w_2w_{32}w_{35}w_{46}$, $w_2w_{28}w_{42}w_{43}$ satisfy this set of relations.
Consider $a=n_0n$, $b=h_{53}n_2$, $c=h_2n_{53}$, $d=h_1h_6n_2n_{32}n_{35}n_{46}$, and $e=h_1h_3h_6n_2n_{28}n_{42}n_{43}$.
Then calculations in MAGMA show that $a$, $b$, $c$, $d$, $e$ satisfy relations for $C_W(w)$ and hence $K=\langle a,b,c,d,e \rangle$
is a homomorphic image of $C_W(w)$. On the other hand, we have 
$\pi(K)=C_W(w)$ and hence $K\simeq C_W(w)$. Let $n=n_1n_5n_3n_6$. Finally, we verify that
$[n,a]=[n,b]=[n,c]=[n,d]=[n,e]=1$ and hence $a,b,c,d,e\in\overline{N}_{\sigma n}$ by Lemma~\ref{normalizer}. Thus, $K$ is a complement for $\overline{T}_{\sigma n}$ in $\overline{N}_{\sigma n}$.

Other cases can be verified in the same way. The first column of Table~\ref{t:split_E7} contains numbers of tori in accordance with Table~\ref{t:main:E7}. The second column for each $w$ contains a set of relations $S(w)$ that defines $C_W(w)$. 
The third column contains examples of generators of a complement. All such generators lie in $\mathcal{T}$.
Therefore,  it is easy to verify in MAGMA that generators satisfy $S(w)$. The natural preimage of $w$ in $\mathcal{T}$
is denote by $n$. In each case we choose an element $x\in\mathcal{T}$ and consider the groups $\overline{T}_{\sigma x}$ and $\overline{N}_{\sigma x}$. Usually $x=n$ but sometimes they differ. To verify that a generator $y$ lies in $\overline{N}_{\sigma x}$, one can check that $[x,y]=1$ and apply Lemma~\ref{normalizer}. Elements from the third column
generate a complement for $\overline{T}_{\sigma x}$ and hence $N(G,T)$ splits over $T$
by Remark~\ref{r:nonsplit}.

For convenience, we add all verified equations in~\cite{StarE7}.

\begin{longtable}{|c|l|l|}
\caption{Splitting normalizers of maximal tori of ~$E^{ad}_7(q)$}\label{t:split_E7} \\
\hline
 \No  & Defining relations for $C_W(w)$ & Generators for a complement  \\ 
      &  &  of $\overline{T}_{\sigma x}$ in $\overline{N}_{\sigma x}$ \\ 
\hline\endfirsthead
\multicolumn{3}{c}{\textit{Table~\ref{t:split_E7} (continued)}} \\
\hline
 \No  & Defining relations  & Generators for a complement  \\ 
      & for $C_W(w)$ &  of $\overline{T}_{\sigma x}$ in $\overline{N}_{\sigma x}$ \\ 
\hline\endhead
\hline
\endfoot
\endlastfoot
  10  & $a^6=b^2=c^2=d^2=e^2=1$,  & $x=n$, $a=n_0n$,     \\
      & $[a,b]=[a,c]=[a,d]=[a,e]=1$, & $b=h_{53}n_2$, $c=h_2n_{53}$,  \\
      &  $(bc)^3=(de)^3=[b,d]=[b,e]=1$,  &  $d=h_1h_6n_2n_{32}n_{35}n_{46}$,   \\
      & $[c,d]=[c,e]=1$ & $e=h_1h_3h_6n_2n_{28}n_{42}n_{43}$ \\ \hline
  12  & $a^{10}=[a,b]=[a,c]=1$, & $x=n$, $a=n_0n$, \\ 
      & $b^2=c^2=(bc)^3=1$ & $b=h_7n_6$, $c=h_6n_7$ \\ \hline
  
  15 & $a^6=b^2=c^2=d^2=(cd)^3=1$, &  $a=x=h_4n$, $b=n_0$, \\
     &  $[a,b]=[a,c]=[a,d]=1,$  & $c=h_4n_{32}n_{35}n_{46}$, \\ 
     &  $[b,c]=[b,d]=1$ & $d=h_1h_4h_6n_{28}n_{42}n_{43}$ \\ \hline
  17 & $a^{10}=b^2=[a,b]=1$  & $a=x=h_2n$, $b=n_0$ \\ \hline
  18 & $a^6=b^2=c^2=1,$ & $a=x=h_2n$, $b=n_0$, \\
     & $[a,b]=[a,c]=[b,c]=1$ & $c=h_3h_7n_{53}$ \\ \hline
  21 & $a^3=b^2=c^3=d^4=1$, & $a=x=n$, $b=n_0$, $c=n_5n_6$, \\
     & $[a,b]=[a,c]=[a,d]=1$, &  $d=h_1h_3h_4n_1n_4n_8n_{22}$  \\
     & $[b,c]=[b,d]=1$, &  \\
     & $c^{-1}a(d^{-1}c^{-1})^2d^{-1}=1$, &  \\
     & $da^{-1}c^{-1}d(dc^{-1})^2d^{-1}cd^{-1}c^{-1}=1$  & \\ \hline
  22 & The same as for Torus 15 & $a=x=h_1n$, \\
     &  & $b=n_0$, $c=h_{33}n_{53}$,  \\
     &  & $d=h_{53}n_{33}$ \\ \hline
  23 & $a^{12}=b^2=[a,b]=1$ & $a=x=n$, $b=n_0$ \\ \hline
  24 & $a^9=b^2=[a,b]=1$  & $a=x=n$, $b=n_0$ \\ \hline
  25 & $a^6=[a,b]=[a,c]=[a,d]=1$, & $x=n$, $a=x^2n_0$,  \\
     & $d^3=c^{-1}bcb=b^2c^{-2}=1,$ & $b=h_1h_2h_5n_3n_6n_{22}n_{32}$, \\
     & $bdc^{-1}d^{-1}=bcdbd^{-1}=1$  & $c=h_2h_3h_4h_5n_3n_6n_{16}n_{38}$, \\
     &  & $d=h_1h_2h_4h_6n_1n_4n_6n_{15}n_{23}n_{46}$  \\ \hline
  27 & $a^{30}=1$ & $x=n$, $a=xn_0$ \\ \hline
  29 & $a^{14}=1$ & $x=n$, $a=xn_0$ \\ \hline
\end{longtable}
\end{proof}

\begin{longtable}{|l|l|c|l|c|}
\caption{Splitting of the normalizers of maximal tori in $E^{ad}_7(q)$}\label{t:main:E7} \\
\hline
 \No  & Representative $w$ & $|w|$ & Cyclic structure of $(2,q-1).T$ & \\ 
\hline\endfirsthead
\multicolumn{5}{c}{\textit{Table~\ref{t:main:E7} (continued)}} \\
\hline
\No  & Representative $w$ & $|w|$ & Cyclic structure of $(2,q-1).T$ & \\
\hline\endhead
\endfoot
\endlastfoot
  1  & $1$ & 1 &   $(q-1)^7$ &  -- \\ \hline
  2  & $w_1$ & 2  & $(q-1)^5\times(q^2-1)$ &  -- \\ \hline
  3  & $w_1w_2$ & 2 &   $(q-1)^3\times(q^2-1)^2$ & -- \\ \hline
  4  & $w_3w_1$ & 3 &   $(q-1)^4\times(q^3-1)$ & + \\ \hline
  5  & $w_2w_3w_5$ & 2 &  $(q-1)\times(q^2-1)^3$ & -- \\ \hline
  6  & $w_1w_3w_5$ & 6 &   $(q-1)^2\times(q^2-1)\times(q^3-1)$ & + \\ \hline
  7 & $w_1w_3w_4$ & 4 &   $(q-1)^3\times(q^4-1)$ & -- \\ \hline
  8 & $w_1w_4w_6w_{53}$ & 2 &  $(q-1)\times(q+1)^2\times(q^2-1)^2$ & -- \\ \hline
  9 & $w_1w_2w_3w_5$ & 6 &   $(q-1)\times(q^2-1)\times(q+1)(q^3-1)$& + \\ \hline
  10 & $w_1w_5w_3w_6$ & 3 &   $(q-1)\times(q^3-1)^2$& + \\  \hline
  11 & $w_1w_4w_6w_3$ & 4 &   $(q-1)\times(q^2-1)\times(q^4-1)$ & -- \\ \hline
  12 & $w_1w_4w_3w_2$ & 5 &   $(q-1)^2\times(q^5-1)$ & + \\ \hline
  13 & $w_3w_2w_5w_4$ & 6 &   $(q-1)\times(q^2-1)\times(q-1)(q^3+1)$ & + \\ \hline
  14 & $w_3w_2w_4w_{16}$ & 4 &  $(q-1)\times((q-1)(q^2+1))^2$ & -- \\ \hline
  15 & $w_1w_5w_3w_6w_2$ & 6 &  $(q^3-1)\times(q+1)(q^3-1)$ & + \\ \hline
  16 & $w_1w_4w_6w_3w_{53}$ & 4 &   $(q-1)\times(q+1)^2\times(q^4-1)$ & -- \\ \hline
  17 & $w_1w_4w_5w_3w_{53}$ &  10 &   $(q-1)\times(q+1)(q^5-1)$ & + \\ \hline
  18 & $w_1w_4w_6w_3w_5$ & 6  &   $(q-1)\times(q^6-1)$ & + \\ \hline
  19 & $w_2w_5w_3w_4w_6$ & 8  &   $(q-1)\times(q^2-1)(q^4+1)$ & + \\ \hline
  20 & $w_{23}w_5w_4w_3w_2$ & 12 &  $(q-1)\times(q-1)(q^2+1)(q^3+1)$ & + \\ \hline
  21 & $w_1w_5w_2w_3w_6w_{53}$ & 3 &  $(q^2+q+1)^2\times(q^3-1)$ & + \\ \hline
  22 & $w_1w_4w_6w_3w_5w_{53}$ & 6 &   $(q^3+1)\times(q^3-1)\times(q+1)$& + \\ \hline
  23 & $w_1w_4w_6w_3w_2w_5$ & 12 &  $(q^3-1)(q^4-q^2+1)$ & + \\ \hline
  24 & $w_1w_4w_{16}w_3w_2w_6$ & 9 &  $(q-1)(q^6+q^3+1)$ & + \\ \hline
  25 & $w_1w_4w_{16}w_3w_2w_{40}$ & 6 &  $(q^2-q+1)\times(q-1)(q^4+q^2+1)$& +\\ \hline
  26 & $w_1w_4w_{6}w_3w_7$ & 12 &   $(q^3-1)\times(q^4-1)$ & + \\ \hline
  27 & $w_1w_4w_{6}w_2w_3w_7$ & 15 &   $(q^5-1)(q^2+q+1)$ & + \\ \hline
  28 & $w_3w_2w_4w_{16}w_7$ & 4 &  $(q-1)(q^2+1)\times(q^2-1)\times(q^2+1)$ & -- \\ \hline
  29 & $w_1w_4w_{6}w_3w_5w_7$ & 7 &   $q^7-1$ & + \\ \hline
  30 & $w_{39}w_3w_5w_1w_4w_6$ & 8 &  $(q^4+1)\times(q-1)(q^2+1)$ & + \\
  \hline
\end{longtable}

\section{The proof for type $E_8$}

In this section, we prove Theorem~\ref{th:E8}. We assume that $G$ is a finite group of Lie type $E_8$ and $q$ is odd. 
The extended Dynkin diagram of $E_8$ is the following.
\vspace{1em}

\begin{picture}(330,60)(-50,-30)
\put(0,0){\line(1,0){50}} \put(50,0){\line(1,0){50}}
\put(100,0){\line(1,0){50}} \put(150,0){\line(1,0){50}}
\put(200,0){\line(1,0){50}} \put(250,0){\line(1,0){50}}
\put(0,0){\circle*{6}} \put(50,0){\circle*{6}}
\put(100,0){\circle*{6}} \put(150,0){\circle*{6}}
\put(200,0){\circle*{6}} \put(250,0){\circle*{6}}
\put(300,0){\circle*{6}} \put(100,0){\line(0,-1){20}}
\put(100,-20){\circle*{6}} \put(0,10){\makebox(0,0){$r_1$}}
\put(0,-10){\makebox(0,0){2}}\put(50,10){\makebox(0,0){$r_3$}}
\put(50,-10){\makebox(0,0){4}}\put(100,10){\makebox(0,0){$r_4$}}
\put(105,-10){\makebox(0,0){6}}\put(150,10){\makebox(0,0){$r_5$}}
\put(150,-10){\makebox(0,0){5}}\put(200,10){\makebox(0,0){$r_6$}}
\put(200,-10){\makebox(0,0){4}}\put(250,10){\makebox(0,0){$r_7$}}
\put(250,-10){\makebox(0,0){3}}\put(300,10){\makebox(0,0){$r_8$}}
\put(300,-10){\makebox(0,0){2}}\put(111,-22){\makebox(0,0){$r_2$}}
\put(90,-20){\makebox(0,0){3}} \put(300,0){\line(1,0){50}}
\put(350,0){\circle*{6}}
\put(350,-10){\makebox(0,0){-1}}\put(350,10){\makebox(0,0){-$r_0$}}
\end{picture}

In this case the Weyl group $W$ is isomorphic to $(2.\Oo_8^+(2)):2$ (in the notation of \cite{Atlas}). So $W$ has the central involution $w_0$. It is easy to verify that $w_0=w_1w_2w_5w_7w_{44}w_{71}w_{89}w_{120}$. 
There are in total 112 conjugacy classes in $W$. 
In contrast to the type $E_7$, it is possible that $w\in W$ and $ww_0$
are conjugate in $W$. There are in total 22 such conjugacy classes.
All the other classes are divided into pairs such that one class in a pair can be obtained from the other by multiplying on $w_0$. This information can be verified in GAP.
We list 67 of representatives of conjugacy classes in Table~\ref{t:main:E8} and the remaining 45 representatives can be obtained by multiplying on $w_0$. 

\begin{remark}
There are two comments concerning~\cite[Table I]{DerF}. The representatives of the conjugacy classes of $W$ in this table were obtained from ~\cite{Car2}. 
\begin{enumerate}[(i)]
\item According to \cite[Table 11]{Car2}, a representative with number 50 has an admissible diagram $D_5\times A_2$. The element $u=w_2w_3w_5w_4w_8w_6w_{120}$ has such a diagram and we choose it for further computations. In~\cite[Table I]{DerF} the representative $u'=w_2w_3w_5w_4w_8w_6w_{35}$ does not have such a diagram, but $u$ is conjugate to $u'w_0$ in $W$. Hence, the representative $u'$ corresponds to a maximal torus with cyclic structure $(q-1)(q^3+1)(q^4+1)$.
\item According to \cite[Table 11]{Car2}, a representative with number 51 has an admissible diagram $D_5(a_1)\times A_2$. In~\cite[Table I]{DerF} the representative $v'=w_{26}w_5w_4w_3w_2w_7w_8$ contains a misprint, because $v'$ has the same admissible diagram as $u'$. The element $v=w_{26}w_5w_4w_3w_2w_{120}w_8$ has a diagram $D_5(a_1)\times A_2$ and we choose it for further computations. Notice that the cyclic structure of the corresponding maximal torus in~\cite[Table I]{DerF} is correct and equal to $(q^2+1)(q^6-1)$.
\end{enumerate}
\end{remark}

Put $n_0=h_2h_5h_7n_1n_2n_5n_7n_{44}n_{71}n_{89}n_{120}$.
Calculations in MAGMA show that $[n_0, n_i]=1$ for $i=1,\ldots,8$ and hence $n_0\in Z(\mathcal{T})$.
We divide our proof into two general cases. First, we consider maximal tori that do not have complements in their algebraic normalizers. If a maximal torus $T$ does not have a complement and corresponds to the conjugacy class of element $w\in W$ then we provide a lift of $w$ of the same order. Secondly, we consider the remaining tori and present generators of their complements (up to conjugation).

\subsection{Type $E_8$: non-splitting cases}
Throughout this subsection we suppose that $T$ is a maximal torus corresponding to the conjugacy class of $w\in W$. 
We write $H=(\lambda_1,\lambda_2,\lambda_3,\lambda_4,\lambda_5,\lambda_6,\lambda_7,\lambda_8)$ for an arbitrary element of $T$. This notation means that $H=\prod\limits_{i=1}^8h_{r_i}(\lambda_i)$.
The main tool to show that $T$ does not have a complement in $N(G,T)$ is the following assertion similar to Lemma~\ref{l:torie7}.
\begin{lemma}\label{l:torie8}
Let $w\in W$ and $C_W(w)\geqslant\langle w_2w_5,w_{61},w_{97}\rangle \simeq\mathbb{Z}_2\times\mathbb{Z}_2\times\mathbb{Z}_2$. Suppose that $n\in\overline{N}$
such that $\pi(n)=w$ and $n_2n_5,n_{61},n_{97}$ lie in $\overline{N}_{\sigma n}$. Then $\overline{T}_{\sigma n}$ does not have a complement in $\overline{N}_{\sigma n}$.
\end{lemma}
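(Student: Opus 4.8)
The plan is to mimic the proof of Lemma~\ref{l:torie7} almost verbatim, replacing the triple of involutions $(w_2w_5,w_{49},w_{63})$ by $(w_2w_5,w_{61},w_{97})$ and the group $E_7^{ad}$ by $E_8$ (which is both simply-connected and adjoint, so there is no centre to quotient by). First I would assume for contradiction that $\overline{T}_{\sigma n}$ has a complement $K$ in $\overline{N}_{\sigma n}$, so that $K\simeq C_{W,\sigma}(w)\simeq C_W(w)$ (the Frobenius acts trivially here since $G$ is untwisted of type $E_8$), and pick preimages $N_1=H_1n_2n_5$, $N_2=H_2n_{61}$, $N_3=H_3n_{97}$ in $K$ of $w_2w_5$, $w_{61}$, $w_{97}$, with $H_1=(\mu_1,\ldots,\mu_8)$, $H_2=(\alpha_1,\ldots,\alpha_8)$, $H_3=(\beta_1,\ldots,\beta_8)$ elements of $\overline{T}_{\sigma n}$. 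Since these three elements are involutions and pairwise commute in $W$, the corresponding $N_i$ must be involutions and pairwise commute in $\overline{N}_{\sigma n}$.

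Next I would run the same MAGMA-assisted computations as in Lemma~\ref{l:torie7}: compute $n_{97}^2$ as an element of $\mathcal{H}$, compute $(Hn_{97})^2$ via Lemma~\ref{conjugation} using the matrix of $w_{97}$, and extract from $N_3^2=1$ a system of equations relating the $\beta_i$ (analogous to $\beta_1^2=\varepsilon\beta_2^2$, $\beta_5^2=-\varepsilon\beta_1^3$). Then I would check in MAGMA that $n_{97}$ commutes with $n_2n_5$ and with $n_{61}$, so that Lemma~\ref{commutator} applies and gives $H_3^{-1}H_3^{n_2n_5}=H_1^{-1}H_1^{n_{97}}$ and $H_3^{-1}H_3^{n_{61}}=H_2^{-1}H_2^{n_{97}}$. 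Using Lemma~\ref{conjugation} I would write out the three vectors $H^{-1}H^{n_{97}}$, $H^{-1}H^{n_2n_5}$, $H^{-1}H^{n_{61}}$ explicitly as monomials in the $\lambda_i$. Comparing coordinates in the first relation should, exactly as in the $E_7$ case, force $\mu_1=1$ (because two coordinates of $H^{-1}H^{n_{97}}$ are distinct powers of $\mu_1$ while the corresponding coordinates of $H^{-1}H^{n_2n_5}$ are $1$), and then yield a relation among the $\beta_i$ such as $\beta_5^2=\beta_6\beta_2^2$; combined with the equations from $N_3^2=1$ this pins down a sign, e.g. $\beta_6=-\beta_1$. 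Feeding this back into the second relation $H_3^{-1}H_3^{n_{61}}=H_2^{-1}H_2^{n_{97}}$ then forces a contradictory set of equations on $\alpha_1$ (of the shape $\alpha_1^2=1$ yet $\alpha_1^{-3}=-\alpha_1^{-1}$), completing the proof.

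The only genuine uncertainty — and hence the main obstacle — is whether the specific triple $(w_2w_5,w_{61},w_{97})$ in $W(E_8)$ has the same numerical behaviour that made the $E_7$ argument go through: namely that $n_{97}^2$, the reflection matrix of $w_{97}$, and the pairwise commutators $[n_{97},n_2n_5]$, $[n_{97},n_{61}]$ produce a sign pattern rigid enough to be inconsistent with $H_1,H_2,H_3\in\overline{T}_{\sigma n}$ lying in a complement. This has to be verified by direct MAGMA computation of the relevant matrices and torus elements, just as the authors did for $E_7$; since $w_{61}$ and $w_{97}$ are presumably chosen precisely because they are conjugates of $w_{49}$ and $w_{63}$ under the embedding $W(E_7)\hookrightarrow W(E_8)$ that respects the centralizer structure, I expect the computation to go through with the analogous outcome, but the sign bookkeeping in the last step (deducing a contradiction on $\alpha_1$) is where one must be careful. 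Everything else — that $C_{W,\sigma}(w)=C_W(w)$, that a complement exists iff $\overline{N}_{\sigma n}$ splits for one choice of $n$, that lifts of involutions can be taken to be involutions inside $K$ — is immediate from Proposition~\ref{normalizer}, Remark~\ref{r:nonsplit}, and the fact that $K$ is finite.
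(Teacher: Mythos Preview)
Your proposal follows essentially the same approach as the paper's proof: assume a complement $K$, lift the three commuting involutions to $K$, square one of them, and combine with two commutator relations via Lemma~\ref{commutator} to force an inconsistent sign. The one difference is the choice of pivot: you propose to square $N_3$ (the lift of $w_{97}$) and use the relations $[N_3,N_1]=[N_3,N_2]=1$, exactly mirroring the $E_7$ template with $w_{97}$ playing the role of $w_{63}$; the paper instead squares $N_2$ (the lift of $w_{61}$) and uses $[N_1,N_2]=[N_3,N_2]=1$. Concretely, the paper computes $n_{61}^2=h_2h_3h_7$, extracts from $N_2^2=1$ the constraints $\alpha_1^2=\alpha_8^2=1$ and $\alpha_1\alpha_2^2\alpha_8=-\alpha_6$, then from $H_1^{-1}H_1^{n_{61}}=H_2^{-1}H_2^{n_2n_5}$ deduces $\alpha_4=\alpha_6$ and hence $\alpha_1\alpha_8=-1$; finally $H_3^{-1}H_3^{n_{61}}=H_2^{-1}H_2^{n_{97}}$ forces $\alpha_1^{-1}\alpha_8=1$, contradicting $\alpha_1\alpha_8=-1$ together with $\alpha_1^2=1$. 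Your variant would very likely also go through, but you would need to verify that the sign in $n_{97}^2$ and the shape of $H^{-1}H^{n_{97}}$ produce an equally rigid system; the paper's choice of $w_{61}$ as pivot may simply have been the one for which the MAGMA output collapsed most cleanly.
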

\begin{proof}
Assume that $\overline{T}_{\sigma n}$ has a complement $K$ in $\overline{N}_{\sigma n}$.
Let $N_1,N_2,N_3$ be preimages of $w_2w_5$, $w_{61}$, and $w_{97}$ in $K$, respectively. Then $N_1=H_1n_2n_5$, $N_2=H_2n_{49}$, $N_3=H_3n_{63}$, where
$$H_1=(\mu_1,\ldots,\mu_7),
H_2=(\alpha_1,\ldots,\alpha_7),
H_3=(\beta_1,\ldots,\beta_7)$$
are elements of $\overline{T}_{\sigma n}$.

Since $w_{61}^2=1$, it is true that $N_2^2=1$. Computations in MAGMA show
that $n_{61}^2=h_2h_3h_7$. By Lemma~\ref{conjugation}, 
$Hn_{61}^2=(\lambda_1^2, -\lambda_1\lambda_2^2\lambda_6^{-1}\lambda_8,-\lambda_1\lambda_3^2\lambda_6^{-1}\lambda_8,
\lambda_1^2\lambda_4^2\lambda_6^{-2}\lambda_8^2,\lambda_1^2\lambda_5^2\lambda_6^{-2}\lambda_8^2,\lambda_1^2\lambda_8^2,
-\lambda_1\lambda_6^{-1}\lambda_7^2\lambda_8,\lambda_8^2)$.
Therefore, $\alpha_1\alpha_2^2\alpha_8=-\alpha_6$, $\alpha_1^2=\alpha_8^2=1$, and $\alpha_4^2=\alpha_5^2=\alpha_6^2$.

Since $[n_2n_5,n_{61}]=1$, it follows from Lemma~\ref{commutator} that $H_1^{-1}H_1^{n_{61}}=H_2^{-1}H_2^{n_2n_5}$.
By Lemma~\ref{conjugation},
$$H^{-1}H^{n_{61}}=(1,\lambda_1\lambda_6^{-1}\lambda_8,\lambda_1\lambda_6^{-1}\lambda_8,
\lambda_1^2\lambda_6^{-2}\lambda_8^2,\lambda_1^2\lambda_6^{-2}\lambda_8^2,\lambda_1^2\lambda_6^{-2}\lambda_8^2,
\lambda_1\lambda_6^{-1}\lambda_8,1),$$
$$H^{-1}H^{n_2n_5}=(1,\lambda_2^{-2}\lambda_4,1,1,\lambda_4\lambda_5^{-2}\lambda_6,1,1,1).$$
Applying to $H_1$ and $H_2$, we see that 
$$(1,\mu_1\mu_6^{-1}\mu_8,\mu_1\mu_6^{-1}\mu_8,
\mu_1^2\mu_6^{-2}\mu_8^2,\mu_1^2\mu_6^{-2}\mu_8^2,\mu_1^2\mu_6^{-2}\mu_8^2,
\mu_1\mu_6^{-1}\mu_8,1)=(1,\alpha_2^{-2}\alpha_4,1,1,\alpha_4\alpha_5^{-2}\alpha_6,1,1,1).$$ Then 
$\mu_1\mu_6^{-1}\mu_8=1$, so $\alpha_2^2=\alpha_4$ and $\alpha_5^2=\alpha_4\alpha_6$.
Since $\alpha_5^2=\alpha_4^2=\alpha_6^2$, we infer that $\alpha_4=\alpha_6$.
Now the equality $\alpha_1\alpha_2^2\alpha_8=-\alpha_6$ implies that $\alpha_1\alpha_8=-1$.

Since $[n_{97},n_{61}]=1$, it follows from Lemma~\ref{commutator} that $H_3^{-1}H_3^{n_{61}}=H_2^{-1}H_2^{n_{97}}$.
By Lemma~\ref{conjugation},
$$H^{-1}H^{n_{97}}=(\lambda_1^{-2}\lambda_8^2,\lambda_1^{-2}\lambda_8^2,\lambda_1^{-3}\lambda_8^3,\lambda_1^{-4}\lambda_8^4,\lambda_1^{-3}\lambda_8^3,\lambda_1^{-2}\lambda_8^2,\lambda_1^{-1}\lambda_8,1).$$
Therefore,  
\begin{multline*}(\alpha_1^{-2}\alpha_8^2,\alpha_1^{-2}\alpha_8^2,\alpha_1^{-3}\alpha_8^3,\alpha_1^{-4}\alpha_8^4,\alpha_1^{-3}\alpha_8^3,\alpha_1^{-2}\alpha_8^2,\alpha_1^{-1}\alpha_8,1)=\\=(1,\beta_1\beta_6^{-1}\beta_8,\beta_1\beta_6^{-1}\beta_8, \beta_1^2\beta_6^{-2}\beta_8^2,\beta_1^2\beta_6^{-2}\beta_8^2,\beta_1^2\beta_6^{-2}\beta_8^2,
\beta_1\beta_6^{-1}\beta_8,1).
\end{multline*}
Comparing the second and the third coordinates, we find that $\alpha_1^{-2}\alpha_8^2=\alpha_1^{-3}\alpha_8^3$ and hence $\alpha_1^{-1}\alpha_8=1$.
Since $\alpha_1^2=1$, we arrive at a contradiction with $\alpha_1\alpha_8=-1$.
\end{proof}

Now we consider cases where Lemma~\ref{l:torie8} can be applied.

\begin{lemma} Let $w$ or $ww_0$ be an element from the second column of Table~\ref{t:nonsplit_E8}.
If a maximal torus $T$ corresponds to the conjugacy class of $w$ then
$T$ does not have a complement in $N(G,T)$. Nevertheless, $w$ has a lift to $N(G,T)$ of order $|w|$.
\end{lemma}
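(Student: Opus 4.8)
The plan is to adapt verbatim the strategy of the $E_7$ non-splitting cases recorded in Table~\ref{t:nonsplit_E7}, now using Lemma~\ref{l:torie8} as the engine in place of Lemma~\ref{l:torie7}. Throughout this section $q$ is assumed odd, the case $p=2$ being already settled by Remark~\ref{r:p=2}. Since by Lemma~\ref{l:ww0} the torus attached to $w$ splits over its algebraic normalizer if and only if the one attached to $ww_0$ does, I may assume that $w$ is literally one of the representatives appearing in the second column of Table~\ref{t:nonsplit_E8}; thus only a finite, explicit list has to be treated.

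For each such $w$ I would, using GAP, produce an element $w'$ that is $W$-conjugate to $w$ and satisfies $w_2w_5,\,w_{61},\,w_{97}\in C_W(w')$, i.e. the hypothesis of Lemma~\ref{l:torie8}, and I would also pick an explicit preimage $n_{w'}\in\mathcal{T}$ with $\pi(n_{w'})=w'$. These data would be collected in a table modelled on Table~\ref{t:nonsplit_E7} (first column: the torus number from Table~\ref{t:main:E8}; third column: $w'$; fourth column: $n_{w'}$). Using MAGMA I would verify, line by line, that $[n_{w'},n_2n_5]=[n_{w'},n_{61}]=[n_{w'},n_{97}]=1$. Since $1\in\mathcal{H}$, Lemma~\ref{normalizer}(ii) then gives $n_2n_5,\,n_{61},\,n_{97}\in\overline{N}_{\sigma n_{w'}}$, so all hypotheses of Lemma~\ref{l:torie8} are met and $\overline{T}_{\sigma n_{w'}}$ has no complement in $\overline{N}_{\sigma n_{w'}}$. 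As $\pi(n_{w'})=w'$ is conjugate to $w$, Remark~\ref{r:nonsplit} transfers this conclusion to $N(G,T)$, finishing the proof.

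The routine but essential verifications are: (a) that for each of the roughly thirty classes on the list a suitable $w'$ actually exists, which is not automatic, since the fixed elementary abelian subgroup $\langle w_2w_5,w_{61},w_{97}\rangle\simeq\mathbb{Z}_2\times\mathbb{Z}_2\times\mathbb{Z}_2$ must sit inside the centralizer of some conjugate of $w$, and for the large-order or elliptic classes this may require a genuine search over the conjugacy class; and (b) that the MAGMA root and node numbering is exactly the one fixed in Section~2, so that the commutator computations and the identity $n_{61}^2=h_2h_3h_7$ used inside Lemma~\ref{l:torie8} are the correct ones. I expect (a), together with checking that Lemma~\ref{l:torie8} is genuinely applicable to every class for which $T$ is claimed not to split, to be the real content; everything else is a mechanical table verification, which I would also deposit in the accompanying repository for the referee's convenience.
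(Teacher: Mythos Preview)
Your proposal is correct and follows essentially the same approach as the paper: reduce to a single representative via Lemma~\ref{l:ww0}, locate a conjugate $w'$ with $\langle w_2w_5,w_{61},w_{97}\rangle\leqslant C_W(w')$ using GAP, choose $n_{w'}\in\mathcal{T}$ with $[n_{w'},n_2n_5]=[n_{w'},n_{61}]=[n_{w'},n_{97}]=1$, invoke Lemma~\ref{normalizer}(ii) and Lemma~\ref{l:torie8}, and finish with Remark~\ref{r:nonsplit}. The paper carries out exactly these steps, with the explicit choices of $w'$ and $n_{w'}$ recorded in Table~\ref{t:nonsplit_E8} (and additionally records lifts of the correct order in a fifth column, which goes slightly beyond the statement of the lemma as formulated).
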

\begin{proof} By Lemma~\ref{l:ww0}, we can assume that $w$ is an element from the second column of Table~\ref{t:nonsplit_E8}. Using GAP, we find that in each case there exists $w'\in W$ which is conjugate to $w$ 
and such that $w_2w_5$, $w_{61}$, $w_{97}\in C_W(w')$. Examples of such $w'$ for every $w$
are listed in Table~\ref{t:nonsplit_E8}. The first column of this table contains the number of a torus according to Table~\ref{t:main:E8},
and the third column contains an example of $w'$ for $w$. The fourth column contains elements $n_{w'}\in\mathcal{T}$ 
such that $[n_{w'},n_2n_5]=[n_{w'},n_{61}]=[n_{w'},n_{97}]=1$ and $\pi(n_{w'})=w'$.
Then Lemma~\ref{normalizer} yields $n_2n_5$, $n_{61}$, and $n_{97}$ belong to $\overline{N}_{\sigma n_{w'}}$ in each case.
Now Lemma~\ref{l:torie8} implies that $\overline{N}_{\sigma n_{w'}}$ does not split over $\overline{T}_{\sigma n_{w'}}$. By Remark~\ref{r:nonsplit}, $T$ does not have a complement in $N(G,T)$. Finally, the fifth column contains another preimage $n'$ of $w'$ in $\mathcal{T}$ such that $|w'|=|n'|$. The latter equality can be verified in MAGMA. Thus, $n'$ is a required lift of $w'$ to $\overline{N}_{\sigma n'}$. Since $n_0\in Z(\mathcal{T})$, we have $n'n_0$ is a required lift of $w'w_0$ and the lemma is proved.
\end{proof}

\begin{longtable}{|c|l|l|l|}
\caption{Representatives of conjugacy classes satisfying the condition of Lemma~\ref{l:torie8}}\label{t:nonsplit_E8} \\
\hline
 \No & $w$  & $w'\in w^W$ & preimage of $w'$,  \\ 
     &      &             & lift of $w'$ \\ 
\hline\endfirsthead
\multicolumn{4}{c}{\textit{Table~\ref{t:nonsplit_E8} (continued)}} \\
\hline
 \No & $w$  & $w'\in w^W$ & preimage of $w'$,  \\ 
     &      &             & lift of $w'$ \\ 
\hline\endhead
\hline
\endfoot
\endlastfoot
  1  & $1$   &  $1$ &  $1$, $1$ \\ \hline
  2  & $w_1$ & $w_2$ & $n_2$, $h_4n_2$ \\ \hline
  3  & $w_1w_2$ & $w_2w_5$ & $n_2n_5$, $h_4n_2n_5$  \\ \hline
  4  & $w_3w_1$ & $w_3w_{99}$ & $n_3n_{99}$,   $n_3n_{99}$ \\ \hline
  5  & $w_2w_3w_5$ & $w_3w_5w_2$ & $n_3n_5n_2$, $h_4n_3n_5n_2$ \\ \hline
  6  & $w_1w_3w_5$ &  $w_{61}w_{99}w_3$ & $n_{61}n_{99}n_3$, $h_6n_{61}n_{99}n_3$ \\ \hline
  7  & $w_1w_3w_4$ & $w_2w_4w_{18}$ & $h_4n_2n_4n_{18}$, $h_6n_2n_4n_{18}$ \\ \hline
  8  & $w_1w_4w_6w_{69}$ & $w_2w_5w_{61}w_{97}$ & $n_2n_5n_{61}n_{97}$, $n_2n_5n_{61}n_{97}$ \\ \hline
  9  & $w_1w_2w_3w_5$ & $w_2w_5w_3w_{99}$ & $n_2n_5n_3n_{99}$, \\
     & & & $h_4n_2n_5n_3n_{99}$ \\ \hline
  11 & $w_1w_4w_6w_3$ & $w_2w_{97}w_4w_{18}$ & $h_4n_2n_{97}n_4n_{18}$,  \\
     & & & $h_6n_2n_{97}n_4n_{18}$ \\ \hline
  13 & $w_3w_2w_5w_4$  & $w_3w_{99}w_{88}w_{95}$ & $n_3n_{99}n_{88}n_{95}$, \\ 
     & & & $n_3n_{99}n_{88}n_{95}$ \\  \hline
  14 & $w_3w_2w_4w_{18}$ & $w_3w_2w_4w_{18}$ & $h_4n_3n_2n_4n_{18}$,  \\  
     & & &     $n_3n_2n_4n_{18}$        \\ \hline
  16 & $w_1w_4w_6w_3w_{69}$ & $w_2w_3w_{61}w_{98}w_{99}$ & $n_2n_3n_{61}n_{98}n_{99}$,  \\ 
     & & &    $h_4n_2n_3n_{61}n_{98}n_{99}$          \\ \hline
  19 & $w_2w_5w_3w_4w_6$ & $w_3w_7w_4w_{18}w_{98}$ & $h_4n_3n_7n_4n_{18}n_{98}$,  \\ 
     & & &    $h_4n_3n_7n_4n_{18}n_{98}$        \\ \hline
  20 & $w_{26}w_5w_4w_3w_2$  & $w_2w_3w_4w_{18}w_{102}$ & $h_4n_2n_3n_4n_{18}n_{102}$,  \\ 
     & & &  $h_6n_2n_3n_4n_{18}n_{102}$           \\ \hline
  26 & $w_1w_4w_6w_3w_7$ & $w_2w_7w_4w_{18}w_{102}$ & $h_4n_2n_7n_4n_{18}n_{102}$, \\ 
     & & &        $h_6n_2n_7n_4n_{18}n_{102}$      \\ \hline
  28 & $w_3w_2w_4w_{18}w_7$ & $w_3w_2w_4w_{18}w_7$ & $h_4n_3n_2n_4n_{18}n_7$,  \\ 
     & & &    $n_3n_2n_4n_{18}n_7$         \\ \hline
  30 & $w_{46}w_3w_5w_1w_4w_6$ & $w_2w_3w_{120}w_{86}w_{87}w_{99}$ & $h_4n_2n_3n_{120}n_{86}n_{87}n_{99}$,  \\ 
     & & &    $n_2n_3n_{120}n_{86}n_{87}n_{99}$      \\ \hline
  31 & $w_2w_3w_5w_7$ & $w_2w_5w_{61}w_{98}$ & $n_2n_5n_{61}n_{98}$,  \\ 
     & & &     $h_4h_8n_2n_5n_{61}n_{98}$        \\ \hline
  32 & $w_{74}w_3w_2w_5w_4$ & $w_3w_{61}w_{4}w_{18}w_{98}$ & $h_4n_3n_{61}n_{4}n_{18}n_{98}$, \\ 
     & & &    $h_1h_2n_3n_{61}n_4n_{18}n_{98}$          \\ \hline
  33 & $w_8w_1w_4w_6w_3$ & $w_3w_7w_5w_{61}w_{99}$ & $n_3n_7n_5n_{61}n_{99}$,  \\ 
     & & &   $h_4n_3n_7n_5n_{61}n_{99}$          \\ \hline
  35 & $w_1w_2w_3w_6w_8w_7$ & $w_2w_7w_{61}w_4w_{18}w_{98}$ & $h_4n_2n_7n_{61}n_4n_{18}n_{98}$,  \\ 
     & & &     $h_6n_2n_7n_{61}n_4n_{18}n_{98}$        \\ \hline
  37 & $w_4w_8w_2w_5w_7w_{120}$ & $w_2w_3w_7w_4w_{18}w_{61}$ & $h_4n_2n_3n_7n_4n_{18}n_{61}$, \\ 
     & & &   $n_2n_3n_7n_4n_{18}n_{61}$          \\ \hline
  42 & $w_2w_3w_4w_5w_6w_8$ & $w_3w_7w_{61}w_4w_{18}w_{98}$ & $h_4n_3n_7n_{61}n_4n_{18}n_{98}$,  \\ 
     & & &        $n_3n_7n_{61}n_4n_{18}n_{98}$     \\ \hline
  48 & $w_2w_4w_5w_6w_7w_8w_{120}$ & $w_2w_3w_7w_4w_{18}w_{61}w_{98}$ & $h_4n_2n_3n_7n_4n_{18}n_{61}n_{98}$,  \\ 
     & & &     $n_2n_3n_7n_4n_{18}n_{61}n_{98}$         \\ \hline
\end{longtable}

\noindent The remaining cases are covered by the following lemma.

\begin{lemma} Let $w$ or $w_0$ be one of the following:
$w_1w_4w_3w_7w_6w_8$, $w_2w_3w_4w_8w_{120}w_{18}$, $w_2w_3w_4w_7w_{120}w_8w_{18}$,
$w_2w_3w_4w_7w_{120}w_{18}w_8w_{74}$. Assume that $T$ is a maximal torus
which corresponds to the conjugacy class of $w$. Then $T$ does not have a complement
in $N(G,T)$ and $w$ has a lift to $N(G,T)$ of order $|w|$.
\end{lemma}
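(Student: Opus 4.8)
The plan is to handle the four elements one by one after reducing by Lemma~\ref{l:ww0}: that lemma lets us assume $w$ itself is one of the four listed elements as far as the splitting statement is concerned, while for the orders of lifts we produce a lift separately for each of $w$ and $ww_0$. For the lifts I will exhibit, for each such $w$, an explicit element $n\in\mathcal{T}$ with $\pi(n)=w$ and check in MAGMA that $n^{|w|}=1$; then $n$ is a lift of $w$ of order $|w|$ in $\overline{N}_{\sigma n}$, so by Remark~\ref{r:nonsplit} the element $w$ has a lift of order $|w|$ in $N(G,T)$. For $ww_0$ I will take the candidate $nn_0$ (recall $n_0\in Z(\mathcal{T})$) and verify directly in MAGMA that its order equals $|ww_0|$; this is painless because, $n_0$ being central with $n_0^2\in\mathcal{H}$, the power $(nn_0)^m$ equals $n^m$ times a controlled element of $\mathcal{H}$.

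The substantial part is non-splitting. The uniform tool used above, Lemma~\ref{l:torie8}, does not apply to these four classes: $C_W(w)$ does not contain a copy of $\langle w_2w_5,w_{61},w_{97}\rangle$ positioned so that the three distinguished lifts $n_2n_5,n_{61},n_{97}$ fall into $\overline{N}_{\sigma n}$. Instead I will argue case by case. For each of the four $w$: compute $C_W(w)$ in GAP; choose a conjugate $w'$ of $w$ together with a representative $n=n_{w'}\in\mathcal{T}$; locate inside $C_W(w')$ a small ``detecting'' subgroup $D$ --- either a copy of $\mathbb{Z}_2\times\mathbb{Z}_2\times\mathbb{Z}_2$ generated by suitable products of reflections, or a copy of $\mathbb{Z}_4\times\mathbb{Z}_2$ whose element of order $4$ squares onto the relevant involution --- so that the natural lifts in $\mathcal{T}$ of the generators of $D$ commute with $n$ (checked in MAGMA) and hence lie in $\overline{N}_{\sigma n}$ by Lemma~\ref{normalizer}. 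Assuming a complement $K\simeq C_W(w)$ for $\overline{T}_{\sigma n}$ in $\overline{N}_{\sigma n}$, I take preimages $N_i=H_in_i\in K$ of the generators of $D$, compute the prescribed powers $(H_in_i)^{k_i}$ and the conjugation actions $H\mapsto H^{n_i}$ by Lemma~\ref{conjugation}, and the commutation conditions $[N_i,N_j]=1$ by Lemma~\ref{commutator}, feeding in the MAGMA values of $n_i^{k_i}$ and $[n_i,n_j]$. This yields a system of equations on the coordinates of the $H_i$ that has no solution in $\overline{\F}_p^*$ --- exactly in the style of the proof of Lemma~\ref{l:torie8}, where combining the relations forces both $\alpha_1^{-1}\alpha_8=1$ and $\alpha_1\alpha_8=-1$ against $\alpha_1^2=1$. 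Hence $\overline{T}_{\sigma n}$ has no complement and $T$ does not split by Remark~\ref{r:nonsplit}.

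The main obstacle is the selection, for each of these four elements, of the detecting subgroup $D$ and of the conjugate $w'$: one needs the generators of $D$ to admit lifts inside $\overline{N}_{\sigma n}$ and, simultaneously, the resulting coordinate system to be genuinely inconsistent --- for a torus that does split every such attempt produces a consistent system, so exhibiting an inconsistent one is the entire content. In the $\mathbb{Z}_4\times\mathbb{Z}_2$-type cases one must in addition track that the order-$4$ preimage is pinned to a fixed square lying in $\mathcal{H}$ and independent of the free torus parameters, which is the $E_8$ analogue of the $n_0^2=h_2h_5h_7$ phenomenon exploited for $E_7^{sc}$. The individual computations --- the matrices of $w'$ and their partial sums, the squares and commutators of the relevant $n_i$ --- are routine consequences of Lemmas~\ref{conjugation} and~\ref{commutator} carried out in MAGMA; it may be cleanest to isolate the recurring contradiction as one further auxiliary lemma (an analogue of Lemma~\ref{l:torie8} adapted to these subgroup shapes) so that the four case analyses become uniform.
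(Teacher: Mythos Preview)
Your strategy is exactly the one the paper uses: reduce via Lemma~\ref{l:ww0}, then for each of the four tori pick a small subgroup of $C_W(w)$ whose generators admit lifts in $\overline{N}_{\sigma n}$, assume a complement $K$, write the preimages as $H_in_i$, and push the relations through Lemmas~\ref{conjugation} and~\ref{commutator} until the coordinate equations on the $H_i$ collide. The lifts are also handled as you propose: an explicit $n\in\mathcal{T}$ of the right order for $w$, and $nn_0$ for $ww_0$.

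The gap is that your proposal stops at the plan. You yourself say that ``exhibiting an inconsistent [system] is the entire content,'' and you do not exhibit one for any of the four cases. The paper's proof shows that the choice of detecting subgroup is not as uniform as you suggest (neither a $\mathbb{Z}_2^3$ nor a $\mathbb{Z}_4\times\mathbb{Z}_2$ works across the board): for Torus~36 the contradiction comes from the order-$4$ element $w_7w_6w_8$ together with $w_1w_4w_3$ and $w_6w_8w_{69}w_{91}$; for Torus~41 one uses the order-$4$ element $w_6w_{19}w_{26}$ together with $w$ and $w_0$; for Torus~49 the key element is $w_{15}w_{119}w_8$ of order~$4$ paired with $w$ and $w_0$; for Torus~59 the argument uses four involutions $w_1w_{99}$, $w_2w_5$, $w_9w_{79}$ and $w$, and crucially relies on the \emph{nontrivial} commutator $[h_4h_7n_2n_5,\,h_3h_8n_9n_{79}]=h_2h_8$ in $\mathcal{T}$, not just commuting lifts. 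In each case the chain of deductions is several steps long and specific to the shape of $C_W(w)$. Your outline does not supply these choices or the derivations, so as written it is not yet a proof; what remains is precisely the part you flagged as the main obstacle.
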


\begin{proof}
We consider each case for $w$ separately.

\textbf{Torus 36.} In this case $w=w_1w_4w_3w_7w_6w_8$ and $$C_W(w)=\langle w_1w_4w_3,w_7w_6w_8,w_0, w_6w_8w_{69}w_{91},w_{20}w_{24}w_{29}w_{35} \rangle\simeq(\mathbb{Z}_4\times \mathbb{Z}_4\times \mathbb{Z}_2\times \mathbb{Z}_2)\rtimes \mathbb{Z}_2,$$
where $$\langle w_1w_4w_3,w_7w_6w_8,w_0, w_6w_8w_{69}w_{91}\rangle=\langle w_1w_4w_3\rangle\times\langle w_7w_6w_8\rangle\times \langle w_0\rangle\times \langle w_6w_8w_{69}w_{91}\rangle\simeq(\mathbb{Z}_4\times \mathbb{Z}_4\times \mathbb{Z}_2\times \mathbb{Z}_2).$$

Put $n=n_1n_4n_3n_7n_6n_8$. Let $N_1=H_1n_1n_4n_3$, $N_2=H_2n_7n_6n_8$ and $N_3=H_3n_6n_8n_{69}n_{91}$ be preimages of $w_1w_4w_3$, $w_7w_6w_8$ and $w_6w_8w_{69}w_{91}$ in $K$, 
where $H_1=(\alpha_i)$, $H_2=(\beta_i)$, and $H_3=(\mu_i)$.

Using MAGMA, we see that $[n,n_1n_4n_3]=[n,n_7n_6n_8]=[n,n_6n_8n_{69}n_{91}]=1$. By Lemma~\ref{normalizer}, we have $n_1n_4n_3, n_7n_6n_8, n_6n_8n_{69}n_{91}\in \overline{N}_{\sigma n}$ and  $H_1,H_2,H_3\in\overline{T}_{\sigma n}$.

Since $(w_7w_6w_8)^4=1$, we have $N_2^4=1$. Using MAGMA, we see that $(n_7n_6n_8)^4=h_6h_8$ and hence Lemma~\ref{conjugation} implies that
$$ (Hn_7n_6n_8)^4=(\lambda_1^4, \lambda_2^4, \lambda_3^4, \lambda_4^4, \lambda_5^4, -\lambda_5^3,\lambda_5^2,-\lambda_5).$$

Therefore, $\beta_5=-1$ and $\beta_1^4=1$.

Since $[n_1n_4n_3,n_7n_6n_8]=1$, Lemma~\ref{commutator} yields
$H_1^{-1}H_1^{n_7n_6n_8}=H_2^{-1}H_2^{n_1n_4n_3}$. Using MAGMA, we see that
$$H^{-1}H^{n_7n_6n_8}=(1,1,1,1,1,\lambda_5\lambda_6^{-2}\lambda_7,\lambda_5\lambda_6^{-1}\lambda_8^{-1}, \lambda_7\lambda_8^{-2}),$$
$$H^{-1}H^{n_1n_4n_3}=(\lambda_1^{-1}\lambda_3^{-1}\lambda_4, 1, \lambda_1\lambda_3^{-2}\lambda_4, \lambda_1\lambda_2\lambda_3^{-1}\lambda_4^{-1}\lambda_5, 1, 1, 1, 1).$$
Therefore, we have $\beta_4=\beta_1\beta_3$, $\beta_3^2=\beta_1\beta_4$, and $\beta_1\beta_2\beta_5=\beta_3\beta_4$.
Hence, $\beta_3^2=\beta_1\beta_4=\beta_1(\beta_1\beta_3)$. So $\beta_3=\beta_1^2$ and $\beta_4=\beta_1\beta_3=\beta_1^3$. Since $\beta_5=-1$ and $\beta_1^4=1$,  we have $\beta_1\beta_2(-1)=\beta_3\beta_4=\beta_1^5=\beta_1$. Therefore, $\beta_2=-1$.

Since $[n_6n_8n_{69}n_{91},n_7n_6n_8]=1$, Lemma~\ref{commutator} yields $H_3^{-1}H_3^{n_7n_6n_8}=H_2^{-1}H_2^{n_6n_8n_{69}n_{91}}$. Using MAGMA, we see that
$$H^{-1}H^{n_6n_8n_{69}n_{91}}=(\lambda_2^{-2}\lambda_5, *,*,*,*,*,*,*).$$

Therefore, $\beta_2^{-2}\beta_5=1$; a contradiction with 
$\beta_2=\beta_5=-1$.

Calculations in MAGMA show that $(h_5n)^{4}=1$. Thus, $h_5n$ is a required lift of $w$ to $\overline{N}_{\sigma h_5n}$.

\textbf{Torus 41.}  In this case $w=w_2w_3w_4w_8w_{120}w_{18}$ and $C_W(w)=\langle w^2w_0 \rangle\times\langle i,j,k \rangle\simeq \mathbb{Z}_{6}\times (\SL_2(3):\mathbb{Z}_4)$,
where $i=w_6w_{19}w_{26}$, $j=w_4w_{13}w_{40}$, and $k=w_1w_2w_4w_6w_{48}w_{51}$. Using GAP, one can verify that $\langle i,j,k \rangle\simeq\langle a,b,c~|~a^4=b^4=c^3=[a,b]=ca^3b^2cb^3=1\rangle$
and $i$, $j$, $k$ satisfy these relations.

Put $n=n_2n_3n_4n_8n_{120}n_{18}$ and $a=h_6n_6n_{19}n_{26}$.
Then, using MAGMA, we see that $[n,a]=1$ and hence $a\in \overline{N}_{\sigma n}$. Suppose that there exists
 a complement $K$ for $\overline{T}_{\sigma n}$. Let $N_1=H_1n$, $N_2=H_2a$, and $N_0=H_0h_2h_3h_5n_0$ be preimages of $w$, $i$ and, $w_0$ in $K$ with $H_1=(\mu_i)$, $H_0=(\beta_i)$ and $H_2=(\alpha_i)$.

Since $[n,a]=1$, Lemma~\ref{conjugation} implies that $H_1^{-1}H_1^a=H_2^{-1}H_2^n$. By Lemma~\ref{conjugation}, we find that
$$H^{-1}H^n=(\lambda_8^{-2},\lambda_2^{-1}\lambda_3\lambda_4^{-1}\lambda_5\lambda_8^{-3},\lambda_1\lambda_4^{-1}\lambda_6\lambda_8^{-4},
\lambda_3^2\lambda_4^{-2}\lambda_6\lambda_8^{-6},\lambda_2^{-1}\lambda_3\lambda_5^{-1}\lambda_6\lambda_8^{-5},\lambda_8^{-4},\lambda_8^{-3},\lambda_7\lambda_8^{-3}),$$
$$H^{-1}H^a=(1,\lambda_2^{-1}\lambda_3\lambda_6^{-1}\lambda_7,\lambda_1\lambda_5^{-1}\lambda_7,
\lambda_1\lambda_2^{-1}\lambda_3\lambda_5^{-1}\lambda_6^{-1}\lambda_7^2,
\lambda_1\lambda_2^{-1}\lambda_3\lambda_5^{-1}\lambda_6^{-1}\lambda_7^2,
\lambda_1\lambda_6^{-2}\lambda_7^2,1,1).$$
Therefore, we conclude that $\alpha_7\alpha_8^3=\alpha_8^3=\alpha_8^2=1$. So $\alpha_7=\alpha_8=1$.
Then  
\begin{multline*}
(1,\mu_2^{-1}\mu_3\mu_6^{-1}\mu_7,\mu_1\mu_5^{-1}\mu_7,
\mu_1\mu_2^{-1}\mu_3\mu_5^{-1}\mu_6^{-1}\mu_7^2,
\mu_1\mu_2^{-1}\mu_3\mu_5^{-1}\mu_6^{-1}\mu_7^2,\mu_1\mu_6^{-2}\mu_7^2,1,1)=\\=
(1,\alpha_2^{-1}\alpha_3\alpha_4^{-1}\alpha_5,\alpha_1\alpha_4^{-1}\alpha_6,\alpha_3^2\alpha_4^{-2}\alpha_6,\alpha_2^{-1}\alpha_3\alpha_5^{-1}\alpha_6,1,1,1).
\end{multline*}
Since the product of the second and the third coordinates on the left side equals the fourth coordinate, 
we have $(\alpha_2^{-1}\alpha_3\alpha_4^{-1}\alpha_5)(\alpha_1\alpha_4^{-1}\alpha_6)=\alpha_3^2\alpha_4^{-2}\alpha_6$ and hence $\alpha_1\alpha_5=\alpha_2\alpha_3$. Moreover, the fourth and the fifth coordinates coincide on the left side, so $\alpha_3^2\alpha_4^{-2}\alpha_6=\alpha_2^{-1}\alpha_3\alpha_5^{-1}\alpha_6$ and hence
$\alpha_2\alpha_3=\alpha_4^2\alpha_5^{-1}.$

Using MAGMA, we see that $a^4=h_2h_3$ and hence Lemma~\ref{conjugation} implies that $$(Ha)^4=(\lambda_1^4, -\lambda_1\lambda_2^2\lambda_3^2\lambda_5^{-2}\lambda_7^2,-\lambda_1^3\lambda_2^2\lambda_3^2\lambda_5^{-2}\lambda_7^2, \lambda_1^4\lambda_4^4\lambda_5^{-4}\lambda_7^4,\lambda_1^4\lambda_7^4,\lambda_1^2\lambda_7^4,\lambda_7^4,\lambda_8^4).$$
Therefore, we infer that $-\alpha_1^{-1}=\alpha_2^2\alpha_3^2\alpha_5^{-2}\alpha_7^2$, $\alpha_1^2=1$.
Since $\alpha_7=1$, we have $-\alpha_1\alpha_5^2=\alpha_2^2\alpha_3^2$. However, we know that 
$\alpha_1\alpha_5=\alpha_2\alpha_3$ and hence $\alpha_1=-1$. Now from 
$\alpha_2\alpha_3=\alpha_4^2\alpha_5^{-1}$, we obtain $\alpha_5^2=-\alpha_4^2$.

Calculations in MAGMA show that $[a,n_0]=1$. By Lemma~\ref{commutator}, we infer that
$H_0^{-1}H_0^a=H_2^{-1}H_2^{n_0}$. Then $H_2^{-1}H_2^{n_0}=(\alpha_1^{-2},\alpha_2^{-2},\alpha_3^{-2},\alpha_4^{-2},\alpha_5^{-2},\alpha_6^{-2},\alpha_7^{-2},\alpha_8^{-2})$. Applying the above equation for $H^{-1}H^a$, we see that
$H_0^{-1}H_0^a=H_0^{-2}=(\ast,\ast,\ast,\beta_1\beta_2^{-1}\beta_3\beta_5^{-1}\beta_6^{-1}\beta_7^2,
\beta_1\beta_2^{-1}\beta_3\beta_5^{-1}\beta_6^{-1}\beta_7^2,\ast,\ast,\ast).$
Since $\alpha_4^2=-\alpha_5^2$, we arrive at a contradiction.

Using MAGMA, we see that $n^{12}=1$ and so $n$ is a required lift of $w$ to $\overline{N}_{\sigma n}$. Then $nn_0$ is a required lift of $ww_0$.

\textbf{Torus 49.} In this case $w=w_2w_3w_4w_7w_{120}w_8w_{18}$ and $$C_W(w)=\langle w_0 \rangle \times
\langle w \rangle \times \langle w_{11}w_{12}, w_4w_{17}, w_{76}w_{86} \rangle \simeq \mathbb{Z}_2\times \mathbb{Z}_4\times (((\mathbb{Z}_4 \times \mathbb{Z}_4) : \mathbb{Z}_3) : \mathbb{Z}_2).$$

Put $n=h_4n_2n_3n_4n_7n_{120}n_8n_{18}$. Observe that $w_{15}w_{119}w_8\in C_W(w)$. Using MAGMA, we see that $[h_2n_{15}n_{119}n_{8},n]=1$ and hence $h_2n_{15}n_{119}n_{8}\in \overline{N}_{\sigma n}$.

Suppose that exists a complement $K$ for $\overline{T}_{\sigma n}$ in $\overline{N}_{\sigma n}$. Then $N_0=H_0n_0$, $N_1=H_1n$, $N_2=H_2h_2n_{15}n_{119}n_{8}$ belong to $K$ for some $H_0$, $H_1=(\mu_i)$ and $H_2=(\alpha_i)$.

Since $[n, h_2n_{15}n_{119}n_{8}]=1$, Lemma~\ref{commutator} implies that
$H_2^{-1}H_2^{N_1}=H_1^{-1}H_1^{N_2}$. By Lemma~\ref{conjugation},
\begin{multline*}
H^{-1}H^{N_1}=(\lambda_7^{-2}\lambda_8^2,\lambda_2^{-1}\lambda_3\lambda_4^{-1}\lambda_5\lambda_7^{-3}\lambda_8^3, \lambda_1\lambda_4^{-1}\lambda_6\lambda_7^{-4}\lambda_8^4,\lambda_3^2\lambda_4^{-2}\lambda_6\lambda_7^{-6} \lambda_8^6,\lambda_2^{-1}\lambda_3\lambda_5^{-1}\lambda_6\lambda_7^{-5}\lambda_8^5, \lambda_7^{-4}\lambda_8^4,\\
\lambda_6\lambda_7^{-4}\lambda_8^2,\lambda_7^{-1}),    
\end{multline*}
$$H^{-1}H^{N_2}=(\lambda_8^{-2},\lambda_8^{-3},\lambda_8^{-4},\lambda_8^{-6},\lambda_8^{-5},\lambda_8^{-4},
\lambda_6\lambda_7^{-2}\lambda_8^{-2},\lambda_6\lambda_7^{-1}\lambda_8^{-2}).$$

Applying this to $H_2^{-1}H_2^{N_1}=H_1^{-1}H_1^{N_2}$, we see that the third and the sixth coordinates on the right side coincide, so $\alpha_1\alpha_4^{-1}\alpha_6\alpha_7^{-4}\alpha_8^4=\alpha_7^{-4}\alpha_8^4$ and hence
$\alpha_1\alpha_4^{-1}\alpha_6=1$. Moreover, we see that the square of the second coordinate equals the fourth coordinate, so $(\alpha_2^{-1}\alpha_3\alpha_4^{-1}\alpha_5\alpha_7^{-3}\alpha_8^3)^2=\alpha_3^2\alpha_4^{-2}\alpha_6\alpha_7^{-6}\alpha_8^6$
and hence $\alpha_2^{-2}\alpha_5^2=\alpha_6$. Finally, since the fifth coordinate equals the product of the first and the second coordinates,
we infer that $(\alpha_7^{-2}\alpha_8^2)(\alpha_2^{-1}\alpha_3\alpha_4^{-1}\alpha_5\alpha_7^{-3}\alpha_8^3)=\alpha_2^{-1}\alpha_3\alpha_5^{-1}\alpha_6\alpha_7^{-5}\alpha_8^5$. Whence, $\alpha_4^{-1}\alpha_5^2=\alpha_6$.

Since $[h_2n_{15}n_{119}n_8,n_0]=1$, we conclude that $$H_0^{-1}H_0^{N_2}=H_2^{-1}H_2^{N_0}=H_2^{-2}=(\alpha_1^{-2},\alpha_2^{-2},\alpha_3^{-2},\alpha_4^{-2},\alpha_5^{-2},\alpha_6^{-2},\alpha_7^{-2},\alpha_8^{-2}).$$ 
We use the same equalities of coordinates for the right side as in the previous
paragraph and get that $\alpha_3^2=\alpha_6^2$, $\alpha_2^4=\alpha_4^2$, and $\alpha_1^2\alpha_2^2=\alpha_5^2$.
We obtain above $\alpha_5^2\alpha_2^{-2}=\alpha_6$, so $\alpha_1^2=\alpha_6$.

Calculations in MAGMA show that $(h_2n_{15}n_{119}n_{8})^4=h_2h_5$. By Lemma~\ref{conjugation},
$$(Hh_2n_{15}n_{119}n_{8})^4=(\lambda_1^4\lambda_6^{-2},-\lambda_2^4\lambda_6^{-3}, \lambda_3^4\lambda_6^{-4},
\lambda_4^4\lambda_6^{-6},-\lambda_5^4\lambda_6^{-5},1,1,1).$$
Therefore, $\alpha_2^4=-\alpha_6^3$. On the other hand, we see above that  $\alpha_2^4=\alpha_4^2$ and hence
$\alpha_4^2=-\alpha_6^3$. Squaring up the equation $\alpha_1\alpha_4^{-1}\alpha_6=1$, we obtain $\alpha_1^2(-\alpha_6^{-1})=1$; a contradiction with the equation $\alpha_1^2=\alpha_6$.

Since $(h_6n)^4=1$ and hence $h_6n$ is a required lift of $w$ in $\overline{N}_{\sigma h_6n}$. Then $h_6nn_0$ is a required lift of $ww_0$.

\textbf{Torus 59.} In this case $w=w_2w_3w_4w_7w_{120}w_{18}w_8w_{74}$ and 
$$C_W(w)=\langle a,b,c,d,e\rangle\simeq (\mathbb{Z}_4.((\mathbb{Z}_2\times \mathbb{Z}_2 \times \mathbb{Z}_2 \times \mathbb{Z}_2) : A_6)) : \mathbb{Z}_2.$$
Moreover, $C_W(w)$ is isomorphic to the following group:
\begin{multline*}\langle a,b,c,d,e~|~a^2=b^2=c^2=d^2=e^2=
(ab)^2=(ac)^3=(ad)^3=(ae)^4=\\=(bc)^4=(bd)^3=(be)^2=(cd)^2=(ce)^3=(de)^2=aceaecbacb=1\rangle,
\end{multline*}
and the elements $w_1w_{99}$, $w_2w_5$, $w_4w_{17}$, $w_6w_{35}$, and $w_9w_{79}$ of $C_W(w)$ satisfy this set of relation.

Put $n=h_4n_2n_3n_4n_7n_{120}n_{18}n_8n_{74}$. Using MAGMA, we see that $[n,h_3h_5n_1n_{99}]=[n,h_4h_7n_2n_5]=[n,h_3h_8n_9n_{79}]=1$. Therefore, $h_3h_5n_1n_{99}, h_4h_7n_2n_5, h_3h_8n_9n_{79}\in \overline{N}_{\sigma n}$. Let $N_1,N_2,N_3,N_4$ be preimages of $w$, $a$, $b$, and $e$ in $K$, respectively.  Then $N_1=H_1n$, $N_2=H_2h_3h_5n_1n_{99}$, $N_3=H_3h_4h_7n_2n_5$ and $N_4=H_4h_3h_8n_9n_{79}$, where
$H_1=(\mu_i)$, $H_2=(\alpha_i)$, $H_3=(\beta_i)$, and $H_4=(\delta_i)$, are elements of $\overline{T}_{\sigma n}$.

Since $b^2=1$, we have $N_3^2=1$. Using MAGMA, we see that $(h_4h_7n_2n_5)^2=1$. By Lemma~\ref{conjugation},
$$(HN_3)^2=(\lambda_1^2,\lambda_4,\lambda_3^2,\lambda_4^2,\lambda_4\lambda_6,\lambda_6^2,\lambda_7^2,\lambda_8^2).$$
Therefore, $\beta_4=\beta_6=1$ and $\beta_1^2=\beta_3^2=\beta_7^2=\beta_8^2=1$.

Since $[a,b]=1$ and $[h_3h_5n_1n_{99},h_4h_7n_2n_5]=1$, Lemma~\ref{commutator} implies that $H_3^{-1}H_3^{N_2}=H_2^{-1}H_2^{N_3}$.
By Lemma~\ref{conjugation}, 
$$H^{-1}H^{N_2}=(\lambda_1^{-2}\lambda_3^2\lambda_4^{-1}\lambda_6\lambda_7^{-1},t^2,
t^2, t^4, t^3, t^2, t^2, t),\text{ where } t=\lambda_3\lambda_4^{-1}\lambda_6\lambda_7^{-1};$$
$$H^{-1}H^{N_3}=(1,\lambda_2^{-2}\lambda_4,1,1,\lambda_4\lambda_5^{-2}\lambda_6,1,1,1).$$
Applying to $H_2$ and $H_3$, we see that $\beta_1^{-2}\beta_3^2\beta_4^{-1}\beta_6\beta_7^{-1}= \beta_3\beta_4^{-1}\beta_6\beta_7^{-1}=1$. Since $\beta_4=\beta_6=\beta_1^2=\beta_3^2=1$,
we infer that $\beta_3=1$ and hence $\beta_7=1$ as well.

Since $[w,b]=1$, Lemma~\ref{conjugation} implies that $H_3^{-1}H_3^{N_1}=H_1^{-1}H_1^{N_3}$.
By Lemma~\ref{conjugation}, 
$$H^{-1}H^{N_1}=(\ast,\ast,\ast,\ast,\ast,\ast,\ast,\lambda_1^{-1}).$$

Applying this to $H_3$ and using the above equation for $H^{-1}H^{N_3}$, we find that $(\ast,\ast,\ast,\ast,\ast,\ast,\ast,\beta_1^{-1})=(\ast,\ast,\ast,\ast,\ast,\ast,\ast,1)$ and hence $\beta_1=1$.

Since $[b,e]=1$ and $[h_4h_7n_2n_5, h_3h_8n_9n_{79}]=h_2h_8$, Lemma~\ref{conjugation} implies that
$H_3^{-1}H_3^{N_4}=H_4^{-1}H_4^{N_3}h_2h_8$.
By Lemma~\ref{conjugation},
$$H^{-1}H^{N_4}=(\lambda_1^{-2}\lambda_4\lambda_7^{-1},\lambda_1^{-1}\lambda_3\lambda_7^{-1},
\lambda_1^{-2}\lambda_4\lambda_7^{-1},\lambda_1^{-2}\lambda_3^2\lambda_7^{-2},\lambda_1^{-2}\lambda_3^2\lambda_7^{-2},\lambda_1^{-2}\lambda_3^2\lambda_7^{-2},\lambda_1^{-2}\lambda_3^2\lambda_7^{-2},\lambda_1^{-1}\lambda_3\lambda_7^{-1}).$$
Since $\beta_1=\beta_3=\beta_4=\beta_7=1$, we get $H_3^{-1}H_3^{N_4}=1$. On the other hand, we apply the above equation for 
$H_4^{-1}H_4^{N_3}$ and find that $h_2h_8=(1,\delta_2^{-2}\delta_4,1,1,\delta_4\delta_5^{-2}\delta_6,1,1,1)$;
a contradiction.

Using MAGMA, we see that $n^4=1$, so $n$ is a lift of $w$ in $\overline{N}_{\sigma n}$ of the same order. Then $nn_0$ is a required lift of $ww_0$.

\end{proof}

\subsection{Type $E_8$: splitting cases}
We consider all cases in a similar manner. First, we need the following lemma for maximal tori of odd order.
\begin{lemma}\label{l:split_E8_odd} Let $w\in W$ and elements $x_1$,$x_2$,$\ldots$,$x_m$ generate $C_W(w)$.
Suppose $n\in\overline{N}$ such that $\pi(n)=w$ and there exist $y_1,y_2,\ldots,y_m\in\mathcal{T}$ 
such that $\pi(y_i)=x_i$. If $|\overline{T}_{\sigma n}|$ is odd then $\langle y_1,y_2,\ldots,y_m \rangle$ is a complement for $\overline{T}_{\sigma n}$ in $\overline{N}_{\sigma n}$.
\end{lemma}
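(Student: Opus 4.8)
The plan is to show that under these hypotheses the subgroup $K=\langle y_1,\ldots,y_m\rangle$ meets $\overline{T}_{\sigma n}$ trivially and, together with $\overline{T}_{\sigma n}$, generates all of $\overline{N}_{\sigma n}$; this is exactly what it means to be a complement. First I would note that $K\leqslant\overline{N}_{\sigma n}$ automatically, since each $y_i$ lies there by assumption and $\overline{N}_{\sigma n}$ is a group. Next, applying $\pi$ to $K$ we get $\pi(K)=\langle\pi(y_1),\ldots,\pi(y_m)\rangle=\langle x_1,\ldots,x_m\rangle=C_W(w)$, and by Proposition~\ref{normalizer} (the displayed isomorphism following it, using that $n$ and $n$ act the same way) we have $\overline{N}_{\sigma n}/\overline{T}_{\sigma n}\simeq C_{W,\sigma}(w)=C_W(w)$ — here the $\sigma$-centralizer equals the ordinary centralizer because $\sigma$ acts trivially on $W$ for the split Frobenius in type $E_l$. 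Hence $K$ surjects onto $\overline{N}_{\sigma n}/\overline{T}_{\sigma n}$, which gives $\overline{N}_{\sigma n}=\overline{T}_{\sigma n}K$.

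It remains to prove $\overline{T}_{\sigma n}\cap K=1$. The kernel of the restriction $\pi|_K$ is $K\cap\ker\pi=K\cap\overline{T}$, and since $K\leqslant\overline{N}_{\sigma n}$ this equals $K\cap\overline{T}_{\sigma n}$; call this normal subgroup $D\trianglelefteq K$. Then $K/D\simeq\pi(K)=C_W(w)$, so $|K|=|D|\cdot|C_W(w)|$. On the other hand $D\leqslant\overline{T}_{\sigma n}$, so $|D|$ divides $|\overline{T}_{\sigma n}|$, which is odd by hypothesis; thus $|D|$ is odd. The main point is now to force $D=1$: I would argue that $D$ must in fact be a $2$-group, whence oddness of $|D|$ forces $|D|=1$. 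To see $D$ is a $2$-group, recall that $D=K\cap\overline{T}$ sits inside the preimage under $\pi$ of the finite group generated by the $y_i$'s, and more precisely $D$ is generated (as the reader can check from the relations used to present $C_W(w)$ in the concrete cases, or in general from the structure of $\overline{N}$) by products of the form $n_{w}^{e(w)}$ arising from evaluating the defining relations of $C_W(w)$ on the lifts $y_i$; every such product lies in $\mathcal{H}=\overline{T}\cap\mathcal{T}=\langle h_r\mid r\in\Delta\rangle$, which is an elementary abelian $2$-group, together with the extra torus terms $h_r(\lambda)$ appearing in the $y_i$, and conjugation by elements of $\overline N$ preserves the property of lying in the $2$-torsion subgroup generated this way. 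Hence $D\leqslant\overline{T}[2]$, the elements of order dividing $2$, which is a $2$-group; combined with $|D|$ odd this yields $D=1$.

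Therefore $\pi|_K$ is injective, $K\cap\overline{T}_{\sigma n}=1$, and $\overline{N}_{\sigma n}=\overline{T}_{\sigma n}K$ with trivial intersection, so $K$ is a complement for $\overline{T}_{\sigma n}$ in $\overline{N}_{\sigma n}$, as claimed. The one place that needs care — the step I expect to be the genuine obstacle — is the claim that $D=K\cap\overline T$ consists only of $2$-torsion: in the applications one typically verifies directly that the $y_i$ satisfy the defining relations of $C_W(w)$ exactly (so that $K\simeq C_W(w)$ and $D=1$ outright), but for the general statement one must instead observe that any element of $\overline T$ expressible through the $n_r$'s and the chosen torus factors of the $y_i$ lies in a subgroup of $\overline T$ on which squaring is trivial, using $n_r^2=h_r(-1)$, $h_r(-1)^2=1$, and the fact that the torus coefficients occurring in the $y_i$ (roots of $-1$, of $1$, etc.) generate only $2$-torsion once a relation word that already lies in $\overline T$ has been formed. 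Given that, the argument closes.
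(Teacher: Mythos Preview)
Your overall strategy matches the paper's exactly: show that $K=\langle y_1,\ldots,y_m\rangle$ surjects onto $C_W(w)$ via $\pi$ and that $D=K\cap\overline{T}_{\sigma n}$ is simultaneously a $2$-group and of odd order, hence trivial. The difference is entirely in how the ``$D$ is a $2$-group'' step is handled, and that is precisely where your argument does not close.

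Your justification that $D\leqslant\overline{T}[2]$ invokes relation words, ``extra torus terms $h_r(\lambda)$'', and an unproved claim that these coefficients ``generate only $2$-torsion once a relation word \ldots\ has been formed''. In the generality of the lemma as stated this is simply false: if some $y_i=H_i u_i$ with $u_i\in\mathcal{T}$ and $H_i\in\overline{T}_{\sigma n}$ of odd order $>1$, a relation word in the $y_i$ can easily produce a nontrivial odd-order element of $\overline{T}_{\sigma n}$, and then $K$ is not a complement. So without an extra hypothesis on the $y_i$ the lemma fails, and your attempted general argument cannot be repaired.

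The paper resolves this in one line by using the tacit hypothesis (clear from the applications in Table~\ref{t:split_E8_odd}) that each $y_i$ lies in the Tits group $\mathcal{T}$. Then $K\leqslant\mathcal{T}$ outright, so $D=K\cap\overline{T}\leqslant\mathcal{T}\cap\overline{T}=\mathcal{H}$, an elementary abelian $2$-group; since $D\leqslant\overline{T}_{\sigma n}$ has odd order, $D=1$. Equivalently (and this is how the paper phrases it), each defining relation of $C_W(w)$ evaluated on the $y_i$ lands in $\mathcal{T}\cap\overline{T}_{\sigma n}\leqslant\mathcal{H}\cap\overline{T}_{\sigma n}=1$, so the relations hold exactly and $K\simeq C_W(w)$. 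Once you insert the hypothesis $y_i\in\mathcal{T}$, your detour through normal closures of relation words becomes unnecessary: the containment $K\leqslant\mathcal{T}$ gives $D\leqslant\mathcal{H}$ immediately.
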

\begin{proof}
Since $x_1, x_2,\ldots,x_m$ generate $C_W(w)$, 
there exists a set of defining relations $S$ of $C_W(w)$ such that $x_1, x_2,\ldots,x_m$ satisfy $S$.
Therefore, each relation of $S$ holds for $y_1,y_2,\ldots,y_m$ up to some elements of $\overline{T}_{\sigma n}$. Let $h$ be any of such elements. 
Since $y_1,y_2,\ldots,y_m\in\mathcal{T}$, we infer that $h\in\mathcal{T}$.
Since $\mathcal{T}\cap\overline{T}_{\sigma n}\leqslant\mathcal{H}$, we have $h\in\overline{T}_{\sigma n}\cap\mathcal{H}$. However, $\mathcal{H}$ is an elementary abelian 2-group and hence
$\mathcal{H}\cap{\overline{T}_{\sigma n}}=1$. Thus, all relations of $S$ hold in $\langle y_1,y_2,\ldots,y_m\rangle$,
and hence it is a complement for $\overline{T}_{\sigma n}$.
\end{proof}

Our strategy is similar in all cases. 
For an element $w$, we find $n\in\mathcal{T}$ such that $\pi(n)=w$ and a set of relations that defines $C_W(w)$. Then we present
elements in $\overline{N}_{\sigma n}$ that satisfy this set of relations. All data is listed in Tables~\ref{t:split_E8}.
As an example, we consider $w=w_1w_5w_3w_6$ that corresponds to Torus 10 in Table~\ref{t:main:E8}.
In this case $$C_W(w)\simeq\mathbb{Z}_3\times((\mathbb{Z}_2\times S_3\times S_3\times S_3):\mathbb{Z}_2).$$
One can verify in GAP that $C_W(w)$ is isomorphic the group defined as follows:
\begin{multline*}
\langle a,b,c,d,e,f,g,i,j~|~ a^3=b^2=c^2=d^2=e^2=f^2=g^2=i^2=j^2=R(a)=R(b)=[c,e]=\\=[c,f]=[c,g]=[c,i]=[d,e]=[d,f]=[d,g]=[d,i]=[e,g]=[e,i]=[f,g]=[f,i]=\\=(cd)^3=(ef)^3=(gi)^3=jcje=jdjf=jgjbg= jijbi=1\rangle 
\end{multline*}
We replace the set of relations defining that $x$ commutes with all other generators by $R(x)$.
Now, if we take $a=w$, $b=w_0$, $c=w_2$, $d=w_{69}$, $e=w_8$, $f=w_{120}$,
$g=w_2w_{37}w_{40}w_{57}$, $i=w_2w_{32}w_{51}w_{52}$, and $j=w_1w_{34}w_{36}w_{84}$ then
all these elements lie in $C_W(w)$ and satisfy the above relations for this group.
Finally, we put $a=n_1n_5n_3n_6$, $b=n_0$, $c=h_{69}n_{2}$, $d=h_2n_{69}$, $e=h_{120}n_8$,
$f=h_8n_{120}$, $g=h_1h_6n_2n_{37}n_{40}n_{57}$, $i=h_1h_3h_6n_2n_{32}n_{51}n_{52}$,
$j=h_2h_8n_1n_{34}n_{36}n_{84}$, and $K=\langle a,b,c,d,e,f,g,i,j \rangle$. Then $\pi(K)=C_W(w)$.
Computations in MAGMA show that generators of $K$ satisfy the set of relations above, so
$K\simeq C_W(w)$. On the other hand, $a$ commutes with other generators of $K$, so 
$a,b,c,d,e,f,g$ lie in $\overline{N}_{\sigma{a}}$ by Lemma~\ref{normalizer}.
Therefore, $\overline{N}_{\sigma{a}}$ splits over $\overline{T}_{\sigma{a}}$.

Other cases can be verified in the same way. We divide information into two parts.
Table~\ref{t:split_E8} contains information for maximal tori of even order.
The first column of Table~\ref{t:split_E8} contains numbers of tori in accordance with Table~\ref{t:main:E8}. The second column for each $w$ contains a set of relations $S(w)$ that defines $C_W(w)$. As above, we replace the set of relations defining that $x$ commutes with all other generators by $R(x)$.
The third column contains examples of generators of a complement. All such generators lie in $\mathcal{T}$.
Therefore, it is easy to verify in MAGMA that the generators satisfy relations $S(w)$. The natural preimage of $w$ in $\mathcal{T}$ is denoted by $n$. In each case we choose an element $x$ that defines the group $\overline{T}_{\sigma x}$. Usually $x=n$ but sometimes they differ. To verify that a generator $y$ lies in $\overline{N}_{\sigma x}$, one can check that $[x,y]=1$ and apply Lemma~\ref{normalizer}.

In Table~\ref{t:split_E8_odd}, we list information for maximal tori of odd order.
The first column of this table contains numbers of maximal tori
in accordance with Table~\ref{t:main:E8}. The second column contains examples of generators of a complement. All such generators lie in $\mathcal{T}$. The natural preimage of $w$ in $\mathcal{T}$ is denoted by $n$. Using MAGMA,
we see that for every generator $y$ it is true that $[n,y]=1$ and hence in each case $y\in\overline{N}_{\sigma n}$ by Lemma~\ref{normalizer}. Now 
Lemma~\ref{l:split_E8_odd} implies that in all these cases listed elements generate corresponding complements.

For convenience, we add all verified equations in~\cite{StarE8}.

\begin{longtable}{|c|l|l|}
\caption{Splitting normalizers of maximal tori of even order in~$E_8(q)$}\label{t:split_E8} \\
\hline
 \No  & Defining relations for $C_W(w)$  & Generators for a complement \\ 
  &  & of $\overline{T}_{\sigma{x}}$ in $\overline{N}_{\sigma{x}}$   \\  
\hline\endfirsthead
\multicolumn{3}{c}{\textit{Table~\ref{t:split_E8} (continued)}} \\ \hline
 \No  & Defining relations for $C_W(w)$  & Generators for a complement \\ 
  &  & of $\overline{T}_{\sigma{x}}$ in $\overline{N}_{\sigma{x}}$   \\ 
\hline\endhead
\hline
\endfoot
\hline\endlastfoot
  10  & $a^3=b^2=c^2=d^2=e^2=f^2=1$,  & $a=x=n$, $b=n_0$, \\
      & $g^2=i^2=j^2=1$, $R(a)$, $R(b)$, & $c=h_{69}n_2$, $d=h_2n_{69}$,  \\
      &  $[c,e]=[c,f]=[c,g]=[c,i]=1$,  &  $e=h_{120}n_8$, $f=h_8n_{120}$, \\
      & $[d,e]=[d,f]=[d,g]=[d,i]=1$, & $g=h_1h_6n_2n_{37}n_{40}n_{57}$,  \\
      & $[e,g]=[e,i]=[f,g]=[f,i]=1$,  & $i=h_1h_3h_6n_2n_{32}n_{51}n_{52}$, \\
      & $(cd)^3=(ef)^3=(gi)^3=1$,  &   $j=h_2h_8n_1n_{34}n_{36}n_{84}$ \\
      & $jcje=jdjf=jgjbg=jijbi=1$  & \\ \hline
  12  & $a^{10}=b^2=c^2=d^2=e^2=1$,  & $x=n$, $a=nn_0$, $b=h_2h_5n_6$, \\
      & $R(a)$, $(bc)^3=(bd)^2=(be)^2=1$, & $c=h_2h_5h_6n_7$, $d=h_2h_5h_6h_7n_8$,  \\ 
      & $(cd)^3=(ce)^2=(de)^3=1$ & $e=h_6h_8n_{120}$ \\ \hline
  15 & $a^6=b^2=c^2=d^2=e^2=f^2=1$, & $a=x=h_4n$, $b=n_0$,   \\
     &  $R(a)$, $R(b)$, $(cd)^3=(ef)^3=1$,  & $c=h_{120}n_8$, $d=h_8n_{120}$,   \\ 
     & $(ce)^2=(cf)^2=(de)^2=(df)^2=1$ & $e=h_1h_4h_6n_{32}n_{51}n_{52}$,  \\ 
     & & $f=h_4n_{37}n_{40}n_{57}$ \\ \hline
  17 & $a^{10}=b^2=c^2=d^2=(cd)^3=1$,  &  $a=x=h_2n$, $b=n_0$,  \\ 
   & $R(a)$, $R(b)$ &  $c=h_2h_5h_7n_8$, $d=h_8n_{120}$ \\ \hline
  18 & $a^6=b^2=c^2=d^2=e^2=1$,  & $a=x=h_2n$, $b=n_0$, \\
     & $(de)^3=[c,d]=[c,e]=1$, &  $c=h_2h_3h_5n_{69}$, $d=h_2h_5h_7n_8$, \\ 
     & $R(a)$, $R(b)$,  & $e=h_8n_{120}$ \\ \hline
  21 & $a^2=b^2=c^2=d^{12}=e^6=1$,   & $x=n$, $a=n_0$, $b=h_{120}n_8$,\\
     & $(bc)^3=[b,d]=[b,e]=1$, & $c=h_8n_{120}$,  \\
     & $[c,d]=[c,e]=1$, $R(a)$,  &  $d=h_4h_5n_1n_2n_6n_4n_{17}n_{26}$, \\ 
     & $[d^8,e]=(d^6e^{-1})^3=1$,  & $e=h_6n_1n_2n_6n_{18}n_{33}n_{45}$ \\ 
     & $d^6e^2d^6e^{-2}=ed^8(d^{-1}e)^2d^{-1}=1$ & \\ \hline
  22 & $a^6=b^2=c^2=d^2=e^2=f^2=1$, & $a=x=h_1h_4h_6n$, $b=n_0$, \\
     & $R(a)$, $R(b)$, $(cd)^3=(ef)^3=1$, & $c=h_{120}n_8$, $d=h_8n_{120}$,    \\
     & $(ce)^2=(cf)^2=(de)^2=(df)^2=1$ & $e=h_{69}n_{32}$, $f=h_{32}n_{69}$ \\ \hline
  23 & $a^{12}=b^2=c^2=d^2=(cd)^3=1$,  & $a=x=n$, $b=n_0$,   \\ 
        & $R(a)$, $R(b)$  &  $c=h_{120}n_8$, $d=h_8n_{120}$  \\ \hline
  24 & $a^{18}=b^2=c^2=(bc)^3=1$, & $x=n$, $a=xn_0$, $b=h_{120}n_8$, \\ 
  & $[a,b]=[a,c]=1$ & $c=h_8n_{120}$   \\ \hline
  25 & $a^6=b^2=c^2=(bc)^3=1$, $R(a)$, & $x=n$, $a=x^2n_0$, \\
     & $d^4=e^3=1$,  & $b=h_{120}n_8$, $c=h_8n_{120}$,  \\
     & $ded^{-1}ede=(e^{-1}d)^3=1$, & $d=h_1h_2h_5n_3n_6n_{25}n_{37}$, \\ 
     &  $[b,d]=[b,e]=[c,d]=[c,e]=1$ & $e=h_1h_2h_4h_6n_1n_4n_6n_{17}n_{26}n_{57}$ \\ \hline
  27 & $a^{30}=b^2=aba^{-1}b^{-1}=1$ & $x=n$, $a=xn_0$, $b=h_6h_8n_{120}$ \\ \hline
  29 & $a^{14}=b^2=aba^{-1}b^{-1}=1$ & $x=n$, $a=xn_0$, $b=h_1h_4h_6h_8n_{120}$ \\ \hline
  34 &  $a^6=b^2=c^2=d^2=e^2=f^2=1$, &  $a=x=h_4h_7n$, $b=n_0$, \\
     & $(de)^3=[c,d]=[c,e]=1$,   &  $c=h_1h_2h_4h_6n_2$,\\
     & $R(a)$, $R(b)$, &  $d=h_3h_4n_{32}n_{51}n_{52}$, \\ 
     & $fcfa^3c=fdfa^3bd=fefa^3be=1$   & $e=h_1h_4h_6n_{37}n_{40}n_{57}$, \\ 
    & & $f=h_4h_5h_6h_7n_1n_{28}n_{42}n_{84}$ \\ \hline
  38 & $a^{10}=b^2=c^2=(bc)^4=1$, & $x=h_3h_7n$, $a=xn_0$,   \\
     & $[a,b]=[a,c]=1$ & $b=h_8n_{97}n_{98}$, $c=h_2h_5h_7n_8$  \\  \hline
  39 & $a^6=b^2=c^2=d^2=1$,  & $a=x=h_3h_8n$, $b=n_0$,  \\
     & $R(a)$, $R(b)$, $[c,d]=1$ & $c=h_3h_5h_7n_1$, $d=h_4h_5h_7h_8n_{120}$ \\ \hline
  40 & $a^6=b^6=c^2=d^2=(cd)^3=1$,  & $a=x=h_3n$, $b=n_7n_8n_0$, \\
     & $R(a)$, $[b,c]=[b,d]=1$ & $c=h_3h_5n_{23}n_{24}$, $d=h_2h_5n_{105}n_{106}$  \\ \hline
  43 & $a^2=b^{18}=c^4=1$, & $x=h_7n$, $a=n_0$,   \\
     & $b^{-1}(c^{-1}b)^2c^{-1}b^{-1}=1$, $R(a)$, & $b=h_1h_4h_5h_7n_1n_6n_{63}n_4n_8n_{19}n_{51}$, \\
     & $(b^{-1}c^{-1})^2c^{-1}(bcb^2)^3c^{-1}b^{-2}cbc=1$ & $c=h_4n_1n_4n_3n_{32}$ \\ \hline
  44 & $a^{30}=b^2=[a,b]=1$ & $a=x=h_4n$, $b=n_0$ \\ \hline
  45 & $a^{20}=b^2=[a,b]=1$ & $a=x=h_5n$, $b=n_0$ \\ \hline
  46 & $a^{14}=b^2=[a,b]=1$ & $a=x=h_8n$, $b=n_0$ \\ \hline
  47 & $a^{8}=b^2=[a,b]=1$  & $a=x=h_2n$, $b=n_0$ \\ \hline
  50 & $a^{24}=b^2=[a,b]=1$  & $a=x=n$, $b=n_0$   \\ \hline
  51 & $a^{12}=b^6=[a,b]=1$ & $a=x=h_1n$, \\
    &      & $b=h_2h_5h_7n_1n_2n_5n_8n_7n_{44}n_{71}n_{89}$\\ \hline
  52 & $a^{12}=b^2=c^2=1$, $R(a)$, $R(b)$ & $a=x=h_7n$, $b=n_0$,\\ 
        & & $c=h_2h_5h_7n_8$  \\ \hline
  53 & $a^{18}=b^2=[a,b]=1$ & $a=x=h_7n$, $b=n_0$ \\ \hline
  54 & $a^6=b^2=c^4=d^3=1$, & $a=x=h_{120}n$, $b=n_0$,  \\
     & $cdc^{-1}dcd=(d^{-1}c)^3=1$,  &  $c=h_1h_2h_5n_3n_6n_{25}n_{37}$, \\ 
     & $R(a)$, $R(b)$ & $d=h_1h_2h_4h_6n_1n_4n_6n_{17}n_{26}n_{57}$ \\ \hline
  55 & $a^{12}=b^2=[a,b]=1$ &  $a=x=h_1n$, $b=n_0$ \\ \hline
  60 & $a^{12}=b^2=c^2=(bc)^3=1$,  & $a=x=h_4n$,  \\
     & $[a,b]=[a,c]=1$ & $b=h_1h_3h_4h_6h_7n_{69}n_{70}$,\\ 
     & & $c=h_2h_7n_{78}n_{79}$ \\ \hline
  61 & $a^8=b^4=c^3=(bc)^4=1$,  & $a=x=n$, $b=h_2n_2n_{109}n_{10}n_{11}$, \\
     & $acabc^{-1}bcb^{-1}a^4b$, $R(a)$ & $c=h_2h_5h_8n_1n_2n_7n_{29}n_{18}n_{44}n_{56}n_{119}$  \\ 
\end{longtable}

\begin{longtable}{|c|l|}\caption{Splitting normalizers of maximal tori of odd order 
in $E_8(q)$}\label{t:split_E8_odd} \\
\hline
 \No  &  Generators for a complement of $\overline{T}_{\sigma{n}}$ in $\overline{N}_{\sigma{n}}$ \\ 
\hline\endfirsthead
\multicolumn{2}{c}{\textit{Table~\ref{t:split_E8_odd} (continued)}} \\
\hline
 \No  & Generators for a complement of $\overline{T}_{\sigma{n}}$ in $\overline{N}_{\sigma{n}}$ \\
\hline\endhead
\hline\endfoot
\endlastfoot
  56 & $a=n^2n_0$, $b=h_1h_4n_1n_4n_{18}n_{44}$,\\
     & $c=h_1h_3h_5h_6h_7h_8n_1n_2n_{64}n_{116}n_{26}n_{28}n_{32}n_{120}$ \\ \hline
  57 & $a=n$, $b=h_2h_3h_4h_5h_7n_1n_2n_5n_{44}n_{28}n_{45}n_{56}n_{114}$, \\
     & $c=h_1h_2h_3h_4h_5h_6n_1n_4n_6n_8n_{58}n_{63}n_{96}n_{113}$     \\ \hline
  58 & $a=nn_0$, $b=h_1h_5h_8n_{49}n_{67}$ \\ \hline
  62 & $a=n$, $b=h_4h_5h_6h_8n_1n_5n_{20}n_{71}n_{10}n_{38}n_{44}n_{67}$, \\   
     & $c=h_1h_2h_3h_5h_6h_8n_1n_5n_{20}n_{78}n_{18}n_{33}n_{49}n_{63}$, \\
     & $d=h_1h_2h_3h_6h_7h_8n_8n_{99}n_{59}n_{120}$ \\ \hline
  63 & $a=n^2$, $b=h_3h_5n_2n_{32}n_{10}n_{63}$, $c=h_2h_4n_4n_2$, \\  & $d=h_1h_2h_5h_6h_7h_8n_8n_{104}n_{58}n_{120}$, $e=n_{61}n_{67}$ \\ \hline
   64 & $a=n$ \\ \hline
   65 & $a=n$ \\ \hline
   66 & $a=n$ \\ \hline
   67 & $a=n$, $b=h_3h_5h_6n_{18}n_{45}n_{92}n_{112}$, $c=h_2h_3h_4n_2n_{29}n_{4}n_{17}$ \\ \hline
\end{longtable}

\begin{longtable}{|c|l|c|l|l|}
\caption{Splitting of the normalizers of maximal tori in $E_8(q)$}\label{t:main:E8} \\
\hline
\No &  Representative $w$ & $|w|$ & Torus $T$ &  \\  
\hline\endfirsthead
\multicolumn{4}{c}{\textit{Table~\ref{t:main:E8} (continued)}} \\
\hline
\No &  Representative $w$ & $|w|$ & Torus $T$ &  \\
\hline\endhead
\hline
\endfoot
\endlastfoot
  1  & $1$ & 1 & $(q-1)^8$ &  -- \\  \hline
  2  & $w_1$ & 2 & $(q-1)^6\times(q^2-1)$ & -- \\  \hline
  3  & $w_1w_2$ & 2 & $(q-1)^4\times(q^2-1)^2$ & -- \\  \hline
  4  & $w_3w_1$ & 3 & $(q-1)^5\times(q^3-1)$ &  -- \\  \hline
  5 & $w_2w_3w_5$ & 2 & $(q-1)^2\times(q^2-1)^3$ &  -- \\  \hline
  6 & $w_1w_3w_5$ & 6 & $(q-1)^3\times(q^2-1)\times(q^3-1)$ &  -- \\  \hline
  7 & $w_1w_3w_4$ & 4 & $(q-1)^4\times(q^4-1)$ &  -- \\ \hline
  8 & $w_1w_4w_6w_{69}$ & 2 & $(q-1)^2\times(q+1)^2\times(q^2-1)^2$ &  -- \\ \hline
  9 & $w_1w_2w_3w_5$ & 6 & $(q-1)^2\times(q^2-1)$&  -- \\ 
    & & & $\times(q+1)(q^3-1)$ & \\ \hline 
  10 & $w_1w_5w_3w_6$ & 3 & $(q-1)^2\times(q^3-1)^2$ & +  \\ \hline
  11 & $w_1w_4w_6w_3$ & 4 & $(q-1)^2\times(q^2-1)\times(q^4-1)$ &  -- \\ \hline
  12 & $w_1w_4w_3w_2$ & 5 & $(q-1)^3\times(q^5-1)$ & +  \\ \hline
  13 & $w_3w_2w_5w_4$ & 6 & $(q-1)^2\times(q^2-1)$ & -- \\ \
     & & & $\times(q-1)(q^3+1)$ & \\ \hline 
  14 & $w_3w_2w_4w_{18}$ & 4 & $(q-1)^2\times(q-1)(q^2+1)^2$&  -- \\ \hline
  15 & $w_1w_5w_3w_6w_2$ & 6 & $(q-1)\times(q^3-1)$ &  + \\ 
     & & & $\times(q+1)(q^3-1)$ & \\ \hline
  16 & $w_1w_4w_6w_3w_{69}$ & 4 & $(q-1)^2\times(q+1)^2\times(q^4-1)$& -- \\ \hline
  17 & $w_1w_4w_5w_3w_{69}$ &  10 & $(q-1)^2\times(q+1)(q^5-1)$ &  + \\ \hline
  18 & $w_1w_4w_6w_3w_5$ & 6  & $(q-1)^2\times(q^6-1)$ & + \\ \hline
  19 & $w_2w_5w_3w_4w_6$ & 8  & $(q-1)^2\times(q^2-1)(q^4+1)$ & -- \\ \hline
  20 & $w_{26}w_5w_4w_3w_2$ & 12 & $(q-1)^2\times(q-1)(q^2+1)(q^3+1)$ & -- \\ \hline
  21 & $w_1w_5w_2w_3w_6w_{69}$ & 3 & $(q-1)\times(q^2+q+1)^2\times(q^3-1)$& + \\ \hline
  22 & $w_1w_4w_6w_3w_5w_{69}$ & 6 & $(q-1)\times(q^3+1)\times(q^3-1)$&  + \\ \
     & & & $\times(q+1)$ & \\ \hline
  23 & $w_1w_4w_6w_3w_2w_5$ & 12 &  $(q-1)\times(q^3-1)(q^4-q^2+1)$ &  +  \\ \hline
  24 & $w_1w_4w_{18}w_3w_2w_6$ & 9 &  $(q-1)\times(q-1)(q^6+q^3+1)$ & + \\ \hline
  25 & $w_1w_4w_{18}w_3w_2w_{48}$ & 6 &  $(q-1)\times(q^2-q+1)$ & +  \\ 
     &  &  &  $\times(q-1)(q^4+q^2+1)$ & +  \\ \hline
  26 & $w_1w_4w_{6}w_3w_7$ & 12 &  $(q-1)\times(q^3-1)\times(q^4-1)$ & -- \\ \hline
  27 & $w_1w_4w_{6}w_2w_3w_7$  & 15 &  $(q-1)\times(q^5-1)(q^2+q+1)$  & + \\ \hline
  28 & $w_3w_2w_4w_{18}w_7$  & 4 &  $(q^2-1)\times((q^2+1)(q-1))^2$ & -- \\ \hline
  29 & $w_1w_4w_{6}w_3w_5w_7$  & 7 &  $(q-1)\times(q^7-1)$ & +  \\ \hline
  30 & $w_{46}w_3w_5w_1w_4w_6$  & 8 &  $(q-1)(q^4+1)\times(q-1)(q^2+1)$ & --  \\ \hline
  31 &  $w_2w_3w_5w_7$ & 2 &  $(q^2-1)^4$ & -- \\ \hline
  32 & $w_{74}w_3w_2w_5w_4$ & 6 &  $(q^2-1)^2\times(q+1)(q^3-1)$ & -- \\ \hline
  33 & $w_8w_1w_4w_6w_3$  & 4 &  $(q^2-1)^2\times(q^4-1)$ & -- \\ \hline
  34 & $w_1w_5w_3w_6w_2w_8$ & 6 &  $(q+1)(q^3-1)\times(q+1)(q^3-1)$ & + \\ \hline
  35 & $w_1w_2w_3w_6w_8w_7$ & 12 & $(q+1)(q^3-1)\times(q^4-1)$ & -- \\ \hline
  36 & $w_1w_4w_3w_7w_6w_8$ & 4 & $(q^4-1)\times(q^4-1)$ & -- \\ \hline
  37 & $w_4w_8w_2w_5w_7w_{120}$ & 4 &  $(q^2-1)^2\times(q^2+1)^2$ & -- \\ \hline
  38 & $w_1w_8w_2w_4w_5w_6$ & 10 &  $(q^2-1)\times(q+1)(q^5-1)$ & + \\   \hline
  39 & $w_1w_2w_4w_6w_5w_7$ & 6 &  $(q^2-1)\times(q^6-1)$ & + \\ \hline
  40 & $w_2w_3w_5w_7w_4w_8$ & 6 &  $(q^2-1)\times(q^6-1)$ & + \\ \hline
  41 & $w_2w_3w_4w_8w_7w_{18}$ & 12 &  $(q-1)(q^2+1)\times(q^2+1)(q^3-1)$ & -- \\ \hline
  42 & $w_2w_3w_4w_5w_6w_8$ & 8 &  $(q^2-1)\times(q^2-1)(q^4+1)$ & -- \\ \hline
  43 & $w_1w_5w_8w_2w_3w_6w_{69}$ & 6 &  $(q^2+q+1)^2\times(q+1)(q^3-1)$ & + \\ \hline
  44 & $w_1w_5w_7w_2w_3w_6w_{8}$  & 30 &  $(q+1)(q^2+q+1)(q^5-1)$ & + \\ \hline
  45 & $w_1w_4w_2w_3w_6w_8w_7$ & 20 &  $(q+1)(q^2+1)(q^5-1)$ & + \\ \hline
  46 & $w_1w_4w_6w_3w_5w_7w_{120}$ & 14 &  $(q+1)(q^7-1)$ & + \\ \hline
  47 & $w_1w_3w_4w_5w_6w_7w_8$ & 8 &  $(q^8-1)$ & + \\ \hline
  48 & $w_2w_4w_5w_6w_7w_8w_{120}$ & 8 &  $(q^2-1)\times(q^2+1)\times(q^4+1)$ & -- \\ \hline
  49 & $w_2w_3w_4w_7w_{120}w_8w_{18}$ & 4 & $(q^2+1)^2\times(q^4-1)$ & -- \\ \hline
  50 & $w_2w_3w_5w_4w_8w_6w_{120}$ & 24 &  $(q+1)(q^3-1)(q^4+1)$ & + \\ \hline
  51 & $w_{26}w_5w_4w_3w_2w_{120}w_8$ & 12 &  $(q^2+1)(q^6-1)$ & + \\ \hline
  52 & $w_1w_4w_6w_3w_2w_5w_8$ & 12 &  $(q^2-1)(q^2+q+1)(q^4-q^2+1)$ & + \\ \hline
  53 & $w_1w_4w_{18}w_3w_2w_6w_8$ & 18 &  $(q^2-1)(q^6+q^3+1)$ & + \\ \hline
  54 & $w_1w_4w_{18}w_3w_2w_{48}w_8$ & 6 &  $(q^2-q+1)^2\times(q+1)(q^3-1)$ & + \\ \hline
  55 & $w_2w_3w_4w_5w_6w_7w_8$ & 12 &  $(q^2-1)(q^6+1)$ & + \\ \hline
  56 & $w_1w_2w_3w_5w_6w_8w_{120}w_{69}$ & 3 &  $(q^2+q+1)^4$ & + \\ \hline
  57 & $w_1w_4w_2w_3w_6w_8w_7w_{120}$ & 5 &  $(q^4+q^3+q^2+q+1)^2$ & +  \\ \hline
  58 & $w_1w_3w_4w_5w_6w_7w_8w_{120}$ & 9 &  $(q^2+q+1)\times(q^6+q^3+1)$ & +  \\ \hline
  59 & $w_2w_3w_4w_7w_{120}w_{18}w_8w_{74}$ & 4 &  $(q^2+1)^4$ & -- \\ \hline
  60 & $w_2w_3w_5w_7w_4w_6w_8w_{114}$ & 12&  $(q^2+1)\times(q^6+1)$ & + \\ \hline
  61 & $w_4w_6w_8w_{113}w_3w_5w_{32}w_7$ & 8 &  $(q^4+1)^2$ & +  \\ \hline
  62 & $w_1w_2w_3w_4w_5w_6w_8w_{120}$& 12 &  $(q^4-q^2+1)(q^2+q+1)$ & +  \\ 
     & & & $\times(q^2+q+1)$  &  \\ \hline
  63 &  $w_1w_4w_6w_8w_3w_{32}w_5w_{120}$ & 6 & $(q^4+q^2+1)\times(q^2+q+1)$ & +  \\ 
     & & & $\times(q^2-q+1)$ & \\ \hline
  64 & $w_1w_2w_3w_4w_5w_6w_7w_8$ & 30 &  $q^8+q^7-q^5-q^4-q^3+q+1$ & + \\ \hline
  65 & $w_1w_4w_6w_8w_{32}w_5w_7w_{120}$ & 24 &  $q^8-q^4+1$ & + \\ \hline
  66 & $w_1w_4w_6w_8w_{32}w_2w_5w_7$ & 20 &  $q^8-q^6+q^4-q^2+1$ & + \\ \hline
  67 & $w_2w_{32}w_5w_7w_1w_4w_6w_{65}$ & 12  &  $(q^4-q^2+1)^2$ & + \\ 
    \hline
\end{longtable}

\section{Results for all types}
In this section we collect the results of Theorems~\ref{th:E6}-\ref{th:E8} in one table. Information for types $E_7$ and $E_8$ is taken from Tables~\ref{t:main:E7} and \ref{t:main:E8}, respectively. 
We use the symbol '+' if the corresponding torus has a complement in its algebraic normalizer, otherwise we put the symbol '--' into the cell. 
To obtain results for $^2E_6(q)$ one can start with the maximal torus $\overline{T}_{\sigma}\simeq\mathbb{Z}_{q+1}^6$ and then use the proofs from~\cite{GS} replacing $q$ by $-q$ everywhere. So we take information for type $E_6^\varepsilon$
from \cite[Table~1]{GS}. The sign '$\pm$' for the case 14 means that
the algebraic normalizers splits over the torus if and only if $q\equiv\varepsilon1\pmod4$.

\begin{longtable}{|c|l|c|c|c||c|l|c|}
\caption{Splitting of normalizers in $E_l(q)$, where $l\in\{6,7,8\}$}\label{t:main} \\
\hline 
Torus & Rep. & $E_6^{\varepsilon}$ & $E_7$  & $E_8$  & Torus & Rep. & $E_8$ \\ \hline
\hline\endfirsthead
\multicolumn{8}{c}{\textit{Table~\ref{t:main} (continued)}} \\
\hline 
Torus & Rep. & $E_6^{\varepsilon}$ & $E_7$  & $E_8$  & Torus & Rep. & $E_8$ \\ \hline\endhead
\endfoot
\endlastfoot
  1 & $1$  & -- & -- & -- & 35 & $\alpha\beta\gamma\zeta\vartheta\eta$ & -- \\ \hline
  2 & $\alpha$ &  -- & -- & -- & 36 & $\alpha\delta\gamma\eta\zeta\vartheta$ & -- \\ \hline
  3 & $\alpha\beta$ &  -- & -- & -- & 37 & $\delta\vartheta\beta\varepsilon\eta\lambda$ &  -- \\ \hline
  4 & $\gamma\alpha$ & + & + & -- & 38 & $\alpha\vartheta\beta\delta\varepsilon\zeta$ &  + \\  \hline
  5 & $\beta\gamma\varepsilon$ & -- & -- & -- & 39 & $\alpha\beta\delta\zeta\varepsilon\eta$ & + \\ \hline
  6 & $\alpha\gamma\varepsilon$ & + & +  & -- & 40 & $\beta\gamma\varepsilon\eta\delta\vartheta$ & + \\ \hline
  7 & $\alpha\gamma\delta$ & -- & -- & -- & 41 & $\beta\gamma\delta\vartheta\eta\rho$ & --  \\ \hline
  8 & $\alpha\delta\zeta\mu$ & -- & -- & -- & 42 & $\beta\gamma\delta\varepsilon\zeta\vartheta$ & -- \\ \hline
  9 & $\alpha\beta\gamma\varepsilon$ & + & + & -- & 43 & $\alpha\varepsilon\vartheta\beta\gamma\zeta\mu$ & + \\ \hline
  10 & $\alpha\varepsilon\gamma\zeta$ & + & + & + & 44 & $\alpha\varepsilon\eta\beta\gamma\zeta\vartheta$ & + \\ \hline
  11 & $\alpha\delta\zeta\gamma$ & -- & -- & -- & 45 & $\alpha\delta\beta\gamma\zeta\vartheta\eta$ & + \\ \hline
  12 & $\alpha\delta\gamma\beta$ & + & + & + & 46 & $\alpha\delta\zeta\gamma\varepsilon\eta\lambda$ & + \\ \hline
  13 & $\gamma\beta\varepsilon\delta$ & + & + & -- & 47 & $\alpha\gamma\delta\varepsilon\zeta\eta\vartheta$ & + \\ \hline
  14 & $\gamma\beta\delta\rho$ & $\pm$ & -- & -- & 48 & $\beta\delta\varepsilon\zeta\eta\vartheta\lambda$ &  -- \\ \hline
  15 & $\alpha\varepsilon\gamma\zeta\beta$ & + & + & + & 49 & $\beta\gamma\delta\eta\lambda\vartheta\rho$ &  -- \\ \hline
  16 & $\alpha\delta\zeta\gamma\mu$ & -- & -- & -- & 50 & $\beta\gamma\varepsilon\delta\vartheta\zeta\lambda$ &  + \\ \hline
  17 & $\alpha\delta\varepsilon\gamma\mu$ & + & + & + & 51 & $\pi\varepsilon\delta\gamma\beta\lambda\vartheta$ &  +  \\ \hline
  18 & $\alpha\delta\zeta\gamma\varepsilon$ & + & + & + & 52 & $\alpha\delta\zeta\gamma\beta\varepsilon\vartheta$ & + \\ \hline
  19 & $\beta\varepsilon\gamma\delta\zeta$ & + & + & -- & 53 & $\alpha\delta\rho\gamma\beta\zeta\vartheta$ & +  \\ \hline
  20 & $\pi\varepsilon\delta\gamma\beta$ & + & + & -- & 54 & $\alpha\delta\rho\gamma\beta\nu\vartheta$ & + \\ \hline
  21 & $\alpha\varepsilon\beta\gamma\zeta\mu$ & + & + & + & 55 & $\beta\gamma\delta\varepsilon\zeta\eta\vartheta$  & + \\ \hline
  22 & $\alpha\delta\zeta\gamma\varepsilon\mu$ & + & + & + & 56 & $\alpha\beta\gamma\varepsilon\zeta\vartheta\lambda\mu$  & + \\ \hline
  23 & $\alpha\delta\zeta\gamma\beta\varepsilon$ & + & + & + & 57 & $\alpha\delta\beta\gamma\zeta\vartheta\eta\lambda$ & + \\ \hline
  24 & $\alpha\delta\rho\gamma\beta\zeta$ & + & + & + & 58 & $\alpha\gamma\delta\varepsilon\zeta\eta\vartheta\lambda$  & + \\ \hline
  25 & $\alpha\delta\rho\gamma\beta\nu$ &  + & + & + & 59 & $\beta\gamma\delta\eta\lambda\rho\vartheta\kappa$ & -- \\ \hline
  26 & $\alpha\delta\zeta\gamma\eta$ &  & + & -- & 60 & $\beta\gamma\varepsilon\eta\delta\zeta\vartheta\psi$ & + \\ \hline
  27 & $\alpha\delta\zeta\beta\gamma\eta$ &  & + & + & 61 & $\delta\zeta\vartheta\omega\gamma\varepsilon\xi\eta$ & + \\ \hline
  28 &  $\gamma\beta\delta\rho\eta$ & & -- & -- & 62 & $\alpha\beta\gamma\delta\varepsilon\zeta\vartheta\lambda$ & + \\ \hline
  29 &  $\alpha\delta\zeta\gamma\varepsilon\eta$ &  & + & + & 63 & $\alpha\delta\zeta\vartheta\gamma\xi\varepsilon\lambda$ & + \\ \hline
  30 & $\tau\gamma\varepsilon\alpha\delta\zeta$ & & + & -- & 64 & $\alpha\beta\gamma\delta\varepsilon\zeta\eta\vartheta$ & + \\ \hline
  31 & $\beta\gamma\varepsilon\eta$ & & & -- & 65 & $\alpha\delta\zeta\vartheta\xi\varepsilon\eta\lambda$ &  + \\ \hline
  32 & $\kappa\gamma\beta\varepsilon\delta$ & & & -- & 66 & $\alpha\delta\zeta\vartheta\xi\beta\varepsilon\eta$ & +  \\ \hline
  33 & $\vartheta\alpha\delta\zeta\gamma$ & & & -- & 67 & $\beta\xi\varepsilon\eta\alpha\delta\zeta\upsilon$ & + \\  \hline
  34 & $\alpha\varepsilon\gamma\zeta\beta\vartheta$ & & & + & & & \\  \hline
\end{longtable}

\section*{Acknowledgement}
This research was supported by the Russian Science Foundation (project no. 14-21-00065).

\Addresses
\end{document}